\documentclass{article}
\usepackage[a4paper, margin=2 cm]{geometry}
\usepackage[utf8]{inputenc}
\usepackage{mathtools}
\usepackage{amsmath}
\usepackage{amssymb}
\usepackage{amsthm}
\usepackage{bbm}
\usepackage{enumerate}  
\usepackage{tikz}
\usepackage{hyperref, cleveref}
\hypersetup{
    colorlinks=true,
    linkcolor=blue,
    filecolor=magenta,      
    urlcolor=cyan,
    breaklinks=true,
    }

\usepackage[dvipsnames]{xcolor}
\usepackage{lineno}
\usepackage{fullpage}
\usepackage{enumitem,thmtools}
\usepackage{comment}

\usepackage{latexsym}
  
\def\PP{{\mathbb P}}
\def\NN{{\mathbb N}}
  
\def\EE{{\mathbb E}}

\def\im{{\text{Im}}}

\DeclarePairedDelimiter{\ceil}{\lceil}{\rceil}

\DeclareMathOperator{\se}{\subseteq}
\renewcommand{\epsilon}{\varepsilon}
\DeclareMathOperator{\eps}{\varepsilon}
\newcommand{\red}[1]{#1_{\text{red}}}
\newcommand{\blue}[1]{#1_{\text{blue}}}

\newtheorem{theorem}{Theorem}[section]
\newtheorem{lemma}[theorem]{Lemma}
\newtheorem{question}[theorem]{Question}
\newtheorem{corollary}[theorem]{Corollary}
\newtheorem{proposition}[theorem]{Proposition}
\newtheorem{claim}[theorem]{Claim}
\newtheorem{fact}[theorem]{Fact}

\newtheorem{definition}[theorem]{Definition}

\title{A degree version of the Burr--Erd\H{o}s conjecture on trees}
\author{Jasmin Katz\thanks{
		Department of Mathematics,
		London School of Economics and Political Science, Houghton Street,
		London WC2A 2AE, UK.
		Email: \href{mailto:j.katz3@lse.ac.uk}{\texttt{j.katz3}@\texttt{lse.ac.uk}.
	}} \and Mat\'ias Pavez-Sign\'e\thanks{Departamento de Ingenier\'ia Matem\'atica, Universidad de Chile, and Centro de Modelamiento Matem\'atico (CNRS IRL2807), Chile. Supported by ANID Fondecyt Regular grant No. 1241398 and ANID Basal Grant CMM FB210005.  \emph{E-mail:} \href{mailto:mpavez@dim.uchile.cl}{\texttt{mpavez}@\texttt{dim.uchile.cl}.}} \and Jozef Skokan\thanks{
		Department of Mathematics,
		London School of Economics and Political Science, Houghton Street,
		London WC2A 2AE, UK.
		Email: \href{mailto:j.skokan@lse.ac.uk}{\texttt{j.skokan}@\texttt{lse.ac.uk}.
	}}}
\date{}
  \makeatletter
  \newcommand{\labelinthm}[1]{%
     \label{temp#1}
     \protected@write \@auxout {}{\string \newlabel{#1}{{\emph{\ref{temp#1}}}{\thepage}{\emph{\ref{temp#1}}}{temp#1}{}} }%
  }
  \makeatother
\newcounter{propcounter}

\begin{document}

\maketitle
\begin{abstract}
An old conjecture of Burr and Erd\H os states that the Ramsey number of any $n$-vertex tree $T$ is at most $2n-2$. 
In 2012, Schelp asked whether a degree version of the Burr--Erd\H{o}s conjecture holds. More precisely, Schelp asked if is it true that for any $\varepsilon>0$ and $\Delta\ge 2$, if $G$ is a graph on $N\ge (2+\varepsilon)n$ vertices and minimum degree $\delta(G)\ge \lfloor 3N/4\rfloor$, then every blue/red colouring of the edges of $G$ yields a monochromatic copy of each $n$-vertex tree with maximum degree at most $\Delta$. We prove this conjecture in a strong form, showing that it is true even if one removes the extra $\varepsilon n$ term in the size of the host graph.
\end{abstract}
\section{Introduction}

Ramsey theory is one of the most fundamental areas of research in extremal combinatorics. We say a graph $G$ is {Ramsey} for a graph $H$ (or {$H$-Ramsey}) if every blue/red colouring of the edges of $G$ yields a monochromatic copy of $H$, in which case we write $G \rightarrow H$ to denote that $G$ is $H$-Ramsey.  From an extremal graph theory perspective, the aim is to understand how large the \textit{minimal} $H$-Ramsey graphs are, where minimality typically refers to either the number of vertices or the number of edges among all $H$-Ramsey graphs. The classical problem in the area involves determining the {Ramsey number} of a graph $H$, denoted $R(H)$, which is the smallest integer $n\in\mathbb N$ such that $K_n$ is Ramsey for $H$. In other words, $R(H)$ is the least order among all $H$-Ramsey graphs, which is well-defined as a consequence of Ramsey's theorem~\cite{ramsey30} from 1929. Determining the Ramsey number of a graph $H$ is an extremely difficult problem in general, and even proving good bounds on $R(H)$ is challenging. For instance, it took nearly 70 years to give the first exponential improvement on $R(K_t)$ (see e.g.~\cite{balister2024upper,campos2023exponential,gupta2024optimizing}).

The progress on computing exact Ramsey numbers of graphs is also very modest.  We know $R(H)$ only for a few classes of graphs, such as factors~\cite{bucic2023tight}, paths~\cite{gerencser1967ramsey}, and cycles~\cite{BONDY197346,faudree1974all,rosta1973ramsey}. For general trees,  Burr and Erd\H os~\cite{BurrErdosRamsey} conjectured in 1976 that any $n$-vertex tree $T$ satisfies $R(T)\le 2n-2$ for even $n$ and $R(T)\le 2n-3$ for $n$ odd, which would be tight as witnessed by the $n$-vertex star~\cite{Harary1972}. Though this conjecture is still open, we know by a result of Yi Zhao \cite{zhao2011} that $R(T)\le 2n-2$ holds for every $n$-vertex tree $T$, whenever $n$ is sufficiently large. 

    In this paper, we are interested in understanding how resilient the property of being $H$-Ramsey is under adversarial local edge-deletion, a question that was first considered in 2010 by Li, Nikiforov, and Schelp~\cite{li2010new}. Formally, the question we want to address is the following.
    \begin{question}\label{mainquestion}
    For a graph $H$, does there exist a constant $c_H\in (0,1)$ such that if $G$ is a graph with at least $R(H)$ vertices and minimum degree $\delta(G)\ge c_H|G|$, then $G\to H$?
    \end{question}
    In 2012, Schelp~\cite{schelp2012} asked for which graphs $H$ one can find a non-trivial answer to Question~\ref{mainquestion}, and proposed a series of conjectures about which families of graphs are good candidates for solving it. This problem has received considerable attention since then. For example,  Benevides, Luczak, Scott, Skokan, and White~\cite{cycles_2012} showed that $c=3/4$ is the answer for the $n$-vertex cycle $C_n$ (see also~\cite{balogh2022monochromatic,li2010new,paths_schelp_conj, NIKIFOROV200869}), and, very recently, Balogh, Freschi, and Treglown~\cite{balogh2025ramsey} made progress in the case when $H$ consists of vertex-disjoint copies of $K_3$. In the asymmetric setting, Arag{\~a}o, Marciano, and Mendon\c{c}a~\cite {aragao2025degree} studied an analogue of Question~\ref{mainquestion} for the pair $(P_n,K_t)$, where $P_n$ denotes the path of length $n$. In the case of trees, Schelp~\cite{schelp2012} conjectured that $c=3/4$ would work for bounded-degree trees too, asking the following degree version of the Burr--Erd\H os conjecture on trees.
   
   \begin{question}Let $\Delta\ge 2$, let $\varepsilon\in (0,1)$, and let $G$ be a graph with $(2 + \epsilon)n$ vertices and minimum degree $\delta(G) > 3|G|/4$. If $T$ is an $n$-vertex tree with maximum degree $\Delta(T)\le \Delta$, does $G \rightarrow T$?\end{question}
   In this paper, we answer this question in the affirmative and show that, in fact, the $\epsilon n$ term is not necessary. 
\begin{theorem}\label{thm:main}
    For all $\Delta \in \NN$ there is some $n_0 \in \NN$ such that for all $n \ge n_0$ the following holds. 
    
    If $G$ is a blue/red edge-coloured graph with $|G|= 2n-1$ and $\delta(G) \ge \lfloor \frac{3|G|}{4} \rfloor$, then $G$ contains a monochromatic copy of every $n$-vertex tree with maximum degree at most $\Delta$.
\end{theorem}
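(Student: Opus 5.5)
We prove Theorem~\ref{thm:main} by the regularity method, combined with a stability analysis for the extremal configurations. Throughout, fix constants $1/n_0 \ll \varepsilon \ll d \ll \eta \ll 1/\Delta$.

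\textbf{Step 1: reduce to connected matchings.} Apply the multicolour regularity lemma to $G$ with its two colour classes. This gives a reduced graph $\Gamma$ on $k$ clusters with $\delta(\Gamma)\ge (3/4-\eta)k$. Each edge of $\Gamma$ is coloured by a majority colour in which the corresponding pair is $(\varepsilon,d)$-regular.

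The standard embedding lemma for bounded-degree trees is the key tool. Suppose some colour, say red, has a connected component in $\Gamma$ containing a matching that covers more than $(1+\eta)k/2$ clusters. Then every $n$-vertex tree $T$ with $\Delta(T)\le\Delta$ embeds in red. To see this, cut $T$ into small subtrees of size $\ll \varepsilon n$. Distribute the pieces along the matching edges, balancing the two sides of each piece according to the bipartition of $T$. Use paths in the component to connect consecutive pieces.

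One subtlety is that $T$'s bipartition classes may be unbalanced, with sizes up to roughly $n-n/\Delta$ and $n/\Delta$. So I would also use a generalised version in which a connected red \emph{fractional matching} or bipartite "$(\alpha,1-\alpha)$-weighted" structure suffices. An alternative would be connected red subgraphs of appropriate fractional size in which one side is allowed to be larger.

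\textbf{Step 2: find a large monochromatic connected matching.} The next task is to show that any $2$-colouring of a graph $\Gamma$ with $\delta(\Gamma)\ge (3/4-\eta)k$ contains a monochromatic connected matching on at least $(1/2+\eta')k$ clusters, unless $\Gamma$ is close to an extremal colouring. This is the analogue of the cycle result of Benevides, \L uczak, Scott, Skokan and White~\cite{cycles_2012}, and I would adapt their argument.

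First, analyse the monochromatic components. Since $\delta>3k/4$, each vertex has a colour class of degree above $3k/8$. Either one colour has a giant component, or the components in both colours form a structure resembling $K_4$ minus a matching, i.e.\ a $2\times 2$ "grid" of blocks. Then apply Tutte--Berge or Gallai--Edmonds to the giant component to find the large matching. When the matching is too small, a small vertex cover forces the rest of the graph to be near one of the extremal configurations.

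The extremal cases are the following two:
\begin{enumerate}[label=(\roman*)]
\item $V(G)$ splits into four parts of size about $N/4$ with the grid colouring;
\item one part of size about $N/2$ is joined in red to everything, while the rest is blue, as in the star-type obstruction.
\end{enumerate}

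\textbf{Step 3: the extremal cases.} This is the main obstacle, because here $|G|=2n-1$ is tight and we have no $\varepsilon n$ slack. I would work directly in $G$, not in the reduced graph.

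First, clean up: move atypical vertices into the right part. The minimum degree $\lfloor 3N/4\rfloor$ guarantees every vertex has many neighbours in the appropriate colour.

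In the grid case, two opposite quarters together with the bipartite red graph between them span about $N/2\approx n$ vertices. We need exactly $n$ vertices, so we must use exceptional vertices or edges between blocks to gain the last few vertices. Since $\Delta(T)$ is bounded, $T$ has either many leaves or many bare paths. I would embed $T$ minus a set of leaves or bare paths greedily or by the regularity blow-up in the dense bipartite red pieces. Then I would finish with a matching or Hall-type argument, placing the leaves or paths using the high-degree vertices. The parity of the colour classes of $T$ versus the part sizes must be handled case by case, and the floor in $\lfloor 3N/4\rfloor$ is exactly what makes the counts close.

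In case (ii), the red graph contains an almost complete bipartite graph with a side of size about $N/2$, and the other side is large. Any tree with $\Delta(T)\le\Delta$ has its larger colour class of size at most $n-n/\Delta$. So it embeds using the spare room on the big side, again via leaf-finishing.

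I expect the exact counting in the grid case, with no slack, to be the hardest part of the proof.
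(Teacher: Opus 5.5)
\textbf{Gap in Step 3.} Your overall framework matches the paper's: regularity, monochromatic connected matchings via the structure lemma of Benevides, \L uczak, Scott, Skokan and White, stability towards the four-block grid, and then an exact analysis inside $G$. However, the extremal step as you describe it would fail. In the grid case you plan to embed $T$ into the red bipartite graph between two opposite quarters, each of size about $n/2$, and to recover the last few vertices by leaf/bare-path finishing and a Hall-type argument. A tree with $\Delta(T)\le\Delta$ may have colour classes of sizes close to $(1-1/\Delta)n$ and $n/\Delta$. Such a tree does not fit into a near-bipartite host whose two sides both have about $n/2$ vertices. The discrepancy is linear, not a matter of a few vertices, so no finishing argument or handling of exceptional vertices can repair it, and the blue bipartite pieces have the same defect, so switching colours does not help. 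Counting is also not the real difficulty: once the exceptional vertices that do not already yield $T$ are distributed, the two red components partition $V(G)$, so one of them has at least $n$ vertices. What you never use is that $\delta(G)\ge\lfloor 3|G|/4\rfloor$ forces every vertex of a quarter to have about $n/2$ neighbours inside its own quarter. Each quarter is therefore an almost complete graph, and these internal edges are the only thing that can absorb an unbalanced tree. This is the same phenomenon behind your Step~1 claim, which is only valid for non-bipartite components, where pieces of $T$ can be re-oriented.

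\textbf{How the paper handles it.} The paper splits according to the colours of these internal edges. Say $G[V_1]$ has many red edges.
\begin{itemize}
\item If every quarter has only $O(\beta n^2)$ blue edges, all quarters are almost red cliques and each red component is nearly complete. One of them has at least $n$ vertices, and a spanning bounded-degree tree is embedded in it by absorption via switchings (Lemma~\ref{lem:exact:1}). Exceptional vertices with many blue neighbours in three quarters are used instead to split $T$ at a central vertex across the two blue components (Proposition~\ref{lem:emb:extremal:2}).
\item Otherwise both colours have $\Omega(n^2)$ edges inside some quarter, and a \emph{single} edge between the two components of one colour suffices. Cut $T$ at an edge to obtain a subtree with between $n/(100\Delta)$ and $n/100$ vertices, and embed it greedily into one component. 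Embed the remaining at most $(1-\frac{1}{100\Delta})n$ vertices into the component with dense internal edges; there, random re-orientation of the pieces of a decomposition of $T$ balances the colour classes (Lemma~\ref{lem:emb:extremal:0}, Proposition~\ref{lem:emb:extremal:1}).
\end{itemize}
The exact order $|G|=2n-1$ is what supplies this crossing edge. Some quarter has at least $\lceil |G|/4\rceil$ vertices, and the floor in the minimum degree then forces an edge into it from the diagonally opposite quarter, which lies in the other component in both colours.

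\textbf{Your configuration (ii).} This needs no exact argument. If it is meant as the bipartite-component outcome of the structure lemma, then either both sides have about $n$ vertices, or one side is so large that the other colour inside it is dense enough for Theorem~\ref{thm:KSS}. In the first case, since every colour class of $T$ has at most $(1-1/\Delta)n$ vertices, there is linear slack. The paper disposes of this situation inside the stability theorem, where the work needed is an embedding into a bipartite reduced graph of only about half minimum degree, plus a $2$-connectivity analysis of the other colour inside each side.
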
 
We also show that with an additional constant term in the minimum degree, we can embed a bounded degree tree in a graph on $2n-2$ vertices, which is tight up to the constant term.
\begin{theorem}\label{thm:main3}
    For all $\Delta \in \NN$ there is some $C>0$ and $n_0 \in \NN$ such that for all $n \ge n_0$ the following holds. 
    
    If $G$ is a blue/red edge-coloured graph with $|G|= 2n-2$ and $\delta(G) \ge \lfloor \frac{3|G|}{4} \rfloor + C$, then $G$ contains a monochromatic copy of every $n$-vertex tree with maximum degree at most $\Delta$.
\end{theorem}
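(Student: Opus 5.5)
Theorem~\ref{thm:main3} will be proved with the same regularity/absorption framework used for Theorem~\ref{thm:main}, and the extra constant $C$ will pay for the one missing vertex. First we apply the multicolour regularity lemma to $G$ and pass to the reduced graph $R$ on $k$ clusters. $R$ inherits minimum degree roughly $(3/4-\eta)k$, and its edges are coloured by majority colour. The goal is a monochromatic connected matching in $R$ covering about $(1/2+\gamma)$ of the cluster mass in one colour. If we get one, we embed $T$ by standard tree-embedding into regular pairs: cut $T$ into small subtrees of size $\beta n$ using bounded degree, and distribute them along the connected matching with the usual balancing. This requires only $|T|\le(1-\gamma')\cdot$(total size of the matching clusters), so it works whenever $R$ is not close to an extremal configuration.

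The main work is the \emph{stability} analysis on $R$. When $|G|=2n-2$ and $\delta\ge 3|G|/4$, a monochromatic connected matching of size strictly above $n$ (i.e.\ above half of $|G|$) fails only near a few extremal colourings.
\begin{itemize}
\item[(a)] $G$ is close to two disjoint sets $A,B$ of size $\approx n-1$ each, with red inside $A$ and $B$ and blue between them. Here the near-balanced bipartite blue graph or the red cliques are used.
\item[(b)] The configuration comes from the $3/4$ degree obstruction: $V$ splits into four parts of size $\approx|G|/4$, one colour joins $V_1\cup V_2$ to $V_3\cup V_4$, the other lies inside, and the non-edges are spread as in the cycle paper~\cite{cycles_2012}.
\end{itemize}
We intend to reuse the stability arguments developed for the case $|G|=2n-1$ in the proof of Theorem~\ref{thm:main}, since the reduced-graph analysis does not see the difference of one vertex.

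In each extremal case we embed $T$ exactly. Let $(X,Y)$ be the bipartition of $T$, with $|X|\ge|Y|$. In case (a) with blue bipartite structure, we need $A$ and $B$ to accommodate $X$ and $Y$. If $|X|\le n-1$ and $|Y|\le n-1$, a greedy-plus-matching embedding works. Low-degree bad vertices are first absorbed by placing them on leaves or on paths of $T$; these are available since $T$ has bounded degree, so it has linearly many leaves or long bare paths. If instead some part is large, or the red cliques have size $\ge n$, we embed in red inside the larger of $A,B$. This is possible after moving atypical vertices: a vertex with many red neighbours in $A$ joins $A$. Counting with $|G|=2n-2$ gives $\max(|A|,|B|)\ge n-1$, one vertex short. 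The additional $+C$ in the minimum degree forces at least $C$ typical vertices to have many edges of the ``wrong'' colour or to be usable across. This is how we gain the one extra vertex, at the cost of at most a constant number of special vertices, which are embedded first as images of leaves.

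The main obstacle will be this exact-embedding step in the extremal cases. The difficulty is to show that $\delta\ge\lfloor 3|G|/4\rfloor+C$ always provides the single extra vertex needed when the host has only $2n-2$ vertices; without $+C$ the statement is false, so the counting is tight. We will first try to show a dichotomy: either one part of the partition already has $n$ vertices good for that colour, or the $C$ extra degree yields a monochromatic bounded-size structure. That structure would be a star of size $\Delta+1$ or a short path linking $A$ and $B$ in the clique colour. Using it, the tree is split at a single vertex (or edge) across two parts, and the remaining forests are embedded in the two almost-complete monochromatic pieces by a greedy argument using bounded degree.
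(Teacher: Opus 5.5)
There is a genuine gap in the tight extremal case, which is exactly where the $+C$ is needed.

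\textbf{The extremal structure.} Your configuration (b) is misdescribed. In the structure given by Theorem~\ref{thm:stability}, each colour has two components, arranged crosswise: blue on $U_1\cup U_2$ and $U_3\cup U_4$, red on $U_1\cup U_3$ and $U_2\cup U_4$. After cleaning, the hard case is your configuration (a) with $|A|=|B|=n-1$. For a concrete instance, take Figure~\ref{fig:1} with every $V_i$ a red clique. Then red is two disjoint cliques $V_1\cup V_4$ and $V_2\cup V_3$ of order $n-1$. Now raise the minimum degree by adding a $(C+1)$-regular bipartite graph between $V_1$ and $V_3$ and a $C$-regular one between $V_2$ and $V_4$, all coloured blue.

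Both of your claims about (a) fail on this graph:
\begin{itemize}
\item There is no red edge, red path or red star joining $A$ and $B$. The adversary can always colour the extra edges blue, so the $+C$ never supplies the ``one extra vertex'' in the clique colour. In the paper, red links between the two red components are handled by Proposition~\ref{lem:emb:extremal:1}, without any use of $C$.
\item The blue graph $G[A,B]$ does not contain $T$ just because both classes of $T$ have size at most $n-1$. It consists of two balanced blocks $K_{n/2,n/2-1}$ (on $V_1\cup V_2$ and on $V_3\cup V_4$) joined by only $O(Cn)$ edges. These edges are invisible in the reduced graph, which is why stability cannot exclude this case, and why for $C=0$ there is no blue $T$ at all.
\end{itemize}

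\textbf{Why a single split cannot work.} What the extra degree actually gives (in the paper, after cleaning to $|W_1\cup W_3|=|W_2\cup W_4|=n-1$ with no red edges across) is this: every vertex of $W_1$ has at least $C$ blue neighbours in $W_4$. These are sparse blue edges between the two blue blocks, compatible with the global blue bipartition. One split at a vertex or edge through such a blue star is not enough. The larger class of $T$, of size up to about $(1-1/\Delta)n$, must be divided between two sides of size about $n/2$, within a window of width about $n/\Delta$.

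For example, take a complete binary tree $S$ whose root has degree $3$. Attach $\Delta-d_S(v)$ leaves to each vertex $v$ of $S$ lying in the class of the leaves of $S$. Every large subtree then has large-class density about $1-1/\Delta$. Every vertex split leaves a group of at least about $2n/3$ vertices, hence more than $n/2$ large-class vertices once $\Delta\ge 6$. Such a group does not fit into either block.

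\textbf{What the paper does instead.} You need $\Omega_\Delta(1)$ crossings between the blocks, and this is the content of Lemma~\ref{lem:emb:extremal:3}:
\begin{itemize}
\item It cuts $T$ into $O(\Delta/\beta)$ vertex-disjoint pieces with $10$-independent roots (Lemma~\ref{vtx_disjoint_beta_decomp}).
\item It sends each piece to a uniformly random block, and McDiarmid's inequality keeps every side of every block below capacity.
\item It routes each crossing through prepared connector sets $X_1\subset W_1$ and $Y_2\subset W_4$ with $\delta(G[X_1,Y_2])=\Omega(C/\Delta)$, together with small reservoir sets.
\end{itemize}
This is why the paper needs $1/C\ll\beta\ll 1/\Delta$, rather than a star of size $\Delta+1$. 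This multi-crossing, randomly balanced embedding is the missing idea in your plan.
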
 
The extremal example of Theorem~\ref{thm:main3} is given by a graph $G$ on $2n-2$ vertices with vertex set $V(G)=V_1\cup V_2\cup  V_3\cup V_4$, where $|V_1|=|V_3|=n/2$ and $|V_2|=|V_4|=n/2-1$ (assuming $n$ is even). For each $i\in [4]$, $V_i$ is any blue/red coloured clique. Both $G[V_1, V_2]$ and $G[V_3, V_4]$ are complete bipartite graphs, with all edges coloured blue, and both $G[V_1, V_4]$ and $G[V_2, V_3]$ are complete bipartite graphs with all edges coloured red. The minimum degree of $G$ is $3n/2 - 3  = \lfloor \frac{3|G|}{4}\rfloor -1$ (this is the degree of the vertices in $V_1$ and $V_3$), but each monochromatic component covers only $n-1$ vertices, so it contains no tree with $n$ vertices.

\begin{figure}[h!]
\centering{
\begin{tikzpicture}

    % Draw lines connecting the top and bottom of the circles (top part)
    \draw (0,1) -- (3,1); % top line
    \draw (0,-1) -- (3,-1); % bottom line
    \draw (-1,0) -- (-1,-3); % left line
    \draw (1,0) -- (1,-3); % left right line
    \draw (2,0) -- (2,-3); % left right line
    \draw (4,0) -- (4,-3); % right right line
    
    % Fill the area between the two circles and lines with blue (top part)
    \fill[red, opacity=0.5] (-1,0) -- (1,0) -- (1,-3) -- (-1,-3) -- cycle;
    \fill[red, opacity=0.5] (2,0) -- (4,0) -- (4,-3) -- (2,-3) -- cycle;
    \fill[blue, opacity=0.5] (0,1) -- (0,-1) -- (3,-1) -- (3,1) -- cycle;

    % Draw lines connecting the top and bottom of the circles (bottom part)
    \draw (0,-2) -- (3,-2); % bottom line of the lower section
    \draw (0,-4) -- (3,-4); % top line of the lower section
    
    % Fill the area between the two circles and lines with blue (bottom part)
    \fill[blue, opacity=0.5] (0,-2) -- (0,-4) -- (3,-4) -- (3,-2) -- cycle;

    \coordinate (A) at (0,0);
    \coordinate (B) at (3,0);
    \coordinate (C) at (0,-3);
    \coordinate (D) at (3,-3);

    % Draw two circles (top part)
    \draw[fill=white] (A) circle(1);
    \draw[fill=white] (B) circle(1);
    
    % Draw two circles (bottom part)
    \draw[fill=white] (C) circle(1);
    \draw[fill=white] (D) circle(1);

    \node at (A) { \(V_1\)};
    \node at (B) {\(V_2\)};
    \node at (D) {\(V_3\)};
    \node at (C) {\(V_4\)};
\end{tikzpicture}
}\caption{In this picture is depicted the extremal colouring, where, for $i\in [4]$, $V_i$ is an arbitrarily coloured clique.}
\label{fig:1}
\end{figure}
We made no attempt to optimise the constant $C$ in Theorem~\ref{thm:main3}, though our proof gives $C=\text{poly}(\Delta)$. Moreover, if we allow a linear error term in the minimum degree, we can improve the maximum degree of the trees we can embed, as follows.
\begin{theorem}\label{thm:main2}
    For all $\delta>0$, there are $\alpha\in(0,1)$ and $n_0 \in \NN$ such that for all $n \ge n_0$ the following holds. 
    
    If $G$ is a blue/red edge-coloured graph with $|G|= 2n-2$ and $\delta(G) \ge (3/2+\delta)n$, then $G$ contains a monochromatic copy of every $n$-vertex tree with maximum degree at most $n^{\alpha}$.
\end{theorem}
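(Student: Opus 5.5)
We plan to prove Theorem~\ref{thm:main2} with the regularity method, taking an approximate structural result for the reduced graph and combining it with an embedding lemma for trees of maximum degree $n^{\alpha}$ into regular pairs.

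\textbf{Regularity and the reduced graph.} We apply the two-coloured Szemerédi regularity lemma to $G$ with parameter $\varepsilon \ll \delta$. This gives a reduced graph $R$ on $k$ clusters whose edges are coloured blue or red; an edge receives a colour when the corresponding pair is $\varepsilon$-regular with density at least $d$ in that colour. Only few edges are lost in this step, so $R$ has $\delta(R)\ge (3/4+\delta/2)k$. Since $|G|=2n-2$, a tree on $n$ vertices occupies a $(1/2+\Theta(1/n))$-fraction of the host graph. The target is therefore a monochromatic connected matching in $R$ covering more than $(1/2+\delta/4)k$ clusters, or some other monochromatic connected structure of that weight, such as a component containing a large bipartite subgraph with a suitable fractional matching.

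\textbf{Finding a large monochromatic connected matching.} This is the key combinatorial step, and here we would follow the cycle argument of Benevides, Łuczak, Scott, Skokan and White~\cite{cycles_2012}. Their argument shows that a 2-coloured graph on $k$ vertices with minimum degree at least $3k/4$ contains a monochromatic connected matching covering about $k/2$ vertices. With the extra $\delta k$ of slack in the minimum degree we expect a surplus of $\Omega(\delta k)$ vertices in the matching. The proof would proceed by case analysis on the monochromatic components.
\begin{enumerate}[label=(\roman*)]
\item If some colour has a giant component, we use Tutte--Berge or König-type arguments to get the matching, using the minimum degree to rule out small vertex covers.
\item If not, the components form the four-part structure of Figure~\ref{fig:1}. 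The slack $\delta k$ in the minimum degree then forces cross edges, and these merge components into one of size more than $(1/2+\delta/4)k$.
\end{enumerate}
The extremal configuration shows that the slack is exactly what is needed here.

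\textbf{Embedding the tree.} Given a monochromatic connected matching $M$ in $R$ of weight at least $(1/2+\delta/4)k$ clusters, say in blue, we embed $T$ into the blue graph on $\bigcup V(M)$. First we split $T$ into $O(1/\beta)$ subforests of size at most $\beta n$ hanging off a small set of cut vertices; for trees this decomposition is standard. We then distribute these subforests, together with the bipartition sizes of each piece, among the matching edges in proportion to their sizes, so that both sides of every pair receive at most $(1-\delta/10)$ of their cluster. The connecting paths in $R$ between matching edges let us route from one pair to the next: we use short paths of the tree, or, if $T$ is path-like, walk along the $R$-paths. Inside an $\varepsilon$-regular pair we embed greedily vertex by vertex, keeping each image vertex typical. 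Trees of maximum degree $n^{\alpha}$ embed greedily into regular pairs with a linear amount of room left over, provided $\alpha$ is small relative to $\varepsilon$. The reason is that each step needs a typical vertex among the neighbours of the parent, of which there are $\Omega(dn/k)\gg n^{\alpha}$, and the total number of atypical vertices stays below $2\varepsilon|V_i|$ per cluster.

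\textbf{Main obstacle.} We expect the hardest part to be the reduced-graph step, namely guaranteeing a connected monochromatic matching of weight above $k/2$ rather than exactly $k/2$. A further difficulty is that a bipartite tree may be highly unbalanced, for example a spider with many legs of length $2$, so the structure must allow unbalanced bipartite embedding. If a matching is not enough, we would replace it by a connected fractional "$b$-matching" that allows a $\Delta$-independent imbalance, and rerun the case analysis from~\cite{cycles_2012} with the $\delta$ slack.
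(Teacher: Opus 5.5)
There is a genuine gap. Your plan is purely a regularity/connected-matching argument, and no such argument, whether with matchings or fractional $b$-matchings, can prove this theorem. The hypotheses allow configurations where the only possible monochromatic host must be filled almost completely on one side.

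Here is such a configuration. Let $V(G)=X\cup Y$ with $|X|=|Y|=n-1$. Make $G[X]$ and $G[Y]$ red cliques, and put a blue bipartite graph between $X$ and $Y$ of minimum degree at least $(1/2+2\delta)n$. Then $\delta(G)\ge(3/2+\delta)n$, and every red component has only $n-1$ vertices, so $T$ has to be blue. The condition $\Delta(T)\le n^{\alpha}$ only forces the smaller colour class of $T$ to have size about $n^{1-\alpha}$. For example, take a tree in which every vertex of the small class has degree $n^{\alpha}$ and almost all vertices of the large class are leaves. Such a $T$ must occupy all but $O(n^{1-\alpha})$ vertices of $X$ or $Y$. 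Your own load constraint of at most a $(1-\delta/10)$-fraction per cluster, plus the discarded exceptional set, caps each side at $(1-\Omega(\delta))n$ usable vertices.

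The obstacle you single out is not the real one. In the reduced graph of this example, blue has a \emph{perfect} connected matching, but its component is bipartite. The red components are non-bipartite but each covers only about $k/2$ clusters. Your case analysis rules out the four-part structure of Figure~\ref{fig:1}, which the slack does exclude. It misses the bipartite-component case, case (ii) of Lemma~\ref{lem:H_strcuture}, which is the true extremal case for this theorem.

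What is missing is a stability step followed by an exact extremal analysis.

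\textbf{Stability.} The paper uses regularity only to prove Theorem~\ref{thm:stability:unbounded}. Either $T$ is found, or $V(G)=U_0\cup U_1\cup U_2$ with $|U_i|\ge(1-\mu)n$, $\delta(\red{G}[U_i])\ge n/2+\delta n/100$, and at most $\mu n^2$ red edges between $U_1$ and $U_2$. Even inside that proof, when a red half has at least $n$ vertices, it uses the spanning-tree theorem of Koml\'os, S\'ark\"ozy and Szemer\'edi (Theorem~\ref{thm:KSS}), not regularity.

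\textbf{Extremal case.} Every exceptional vertex has to be placed exactly:
\begin{enumerate}
\item If a vertex has at least $100\mu n$ red neighbours in both halves, split $T$ with Lemma~\ref{lem:cutvertex:2}. Embed the large piece in red via Theorem~\ref{thm:KSS} and the small piece greedily in the other half.
\item Otherwise, distribute $U_0$ to get a blue bipartite graph $(U_1^+,U_2^+)$ with $|U_1^+|\ge n-1$, $|U_2^+|\ge(1-\mu)n$, and minimum degree $(1+\delta/2)|U^+_{3-i}|/2$.
\end{enumerate}
In the second case, $T$ is embedded by the bipartite KSS-type Lemma~\ref{thm:KSS:bipartite}. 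Its key case is when the larger class of $T$ has size at least $(1-\mu')n$. There one embeds a small leaf-rich subtree at random, then embeds the rest of $T$ minus a few leaves, and finally absorbs those leaves by switchings.

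An ingredient of this kind is indispensable: exact handling of the exceptional vertices, plus an absorption step for trees that nearly span one side. Without it, your argument covers only the non-extremal situation.
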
 
The bound on the maximum degree in Theorem~\ref{thm:main2} might not be optimal, and we believe it might be pushed up to $o(n/\log n)$, which would be best possible. To see why this is a natural barrier, let us consider the following example, which is inspired by a famous construction of Koml\'os, S\'ark\"ozy and Szemer\'edi~\cite{KSS}. Let $X$ and $Y$ be disjoint sets of size $n$ each, and let $G$ be a graph with vertex set $V(G)=X\cup Y$ defined as follows. For $p=0.9$, say, let $G[X]$ and $G[Y]$ be independent copies of $G(n,p)$, and between $X$ and $Y$ put a binomial bipartite random graph with edge probability $p$. It is clear that with high probability $G$ has minimum degree roughly $0.9|G|$. Now, colour the edges in both $G[X]$ and $G[Y]$ with blue and colour the crossing edges between $X$ and $Y$ in red. For a sufficiently small constant $c\in (0,1)$, if $T$ is the tree consisting of a vertex that has $c\log n$ children and each grandchild has roughly $n/c\log n$ children, then with high probability $T$ is not a monochromatic subgraph of $G$, as otherwise there would be a dominating set of size $c\log n$ in some colour, which does not exist with high probability. 

    For the proofs of Theorems~\ref{thm:main},~\ref{thm:main3} and~\ref{thm:main2}, we follow a stability approach as we briefly outline now. Given a graph $G$ as in Theorem~\ref{thm:main}, the first step is to prove a stability result (Theorem~\ref{thm:stability}) which states that either $G$ contains each $n$-vertex tree with bounded degree in some colour or it is close to the extremal example depicted in Figure~\ref{fig:1}. To prove this, we use the \textit{regularity method} and, in particular, ideas around the \textit{connected-matching method} introduced by \L uczak~\cite{luczak1999r} (see Section~\ref{sec:regularity}) in combination with structural results for graphs without large connected matchings (see Lemma~\ref{lem:H_strcuture}). The second step is thus to prove that if $G$ looks similar to the extremal example, then it contains every $n$-vertex tree with bounded degree in some colour. For the extremal analysis, we assume that $G$ contains pairwise disjoint sets $V_1,V_2,V_3,V_4$ such that $G[V_i]$ is an almost complete graph of size $|V_i|\approx n/2$, for $i\in [4]$, $G[V_1,V_2]$ and $G[V_3,V_4]$ are blue almost complete bipartite graphs, $G[V_1,V_4]$ and $G[V_2,V_3]$ are red almost complete bipartite graphs, and there are only $o(n^2)$ crossing edges. For simplicity, assume that $V(G)=V_1\cup V_2\cup V_3\cup V_4$, in which case, since $|G|=2n-1$, at least one of the monochromatic graphs covers at least $n$ vertices. The problem thus reduces to finding an $n$-vertex tree in a graph of high minimum degree, for which we use randomised embeddings and the \textit{absorption method} via switchings. 
    
    Of course, the above outline is oversimplified. In particular, a lot of extra work is needed to refine the approximate extremal example we start with, as $V_1\cup V_2\cup V_3\cup V_4$ might not cover $V(G)$ and it is crucial to incorporate every single vertex in this structure. For Theorem~\ref{thm:main3}, we need some extra work as the components we find might have exactly $n-1$ vertices, in which case we need to use the extra room in the minimum degree condition to find crossing edges between two of the monochromatic components. If the number of crossing edges is large enough (compared to the maximum degree of the tree), we show that this is enough to find a copy of each $n$-vertex tree with bounded degree. For trees with unbounded degree as in Theorem~\ref{thm:main2}, we follow a similar strategy, and we highlight that for the extremal analysis we need some sort of bipartite version of a celebrated result of Koml\'os, S\'ark\"ozy and Szemer\'edi~\cite{KSS} about minimum degree conditions forcing the containment of spanning trees.

    The paper is organised as follows. In Section~\ref{sec:preliminaries} we collect the necessary tools for reading this paper. In Section~\ref{sec:regularity} we show how to embed bounded-degree trees using regularity and we use these results in Section~\ref{sec:stability}  to prove the stability results. In Section~\ref{sec:extremal} we prove four results needed to analyse the extremal situation and in Section~\ref{sec:final} we finally prove Theorems~\ref{thm:main},~\ref{thm:main3} and~\ref{thm:main2}. We finish with some concluding remarks and questions in Section~\ref{sec:remarks}.
\section{Preliminaries}\label{sec:preliminaries}
\subsection{Notation}
For a graph $G$,  $V(G)$ and $E(G)$ denote the set of vertices and edges $G$, respectively, and we write $|G|$ for the number of vertices of $G$ and $e(G)$ for the number of edges of $G$. Given a vertex $x \in V(G)$, we write $N_G(x)$ for its neighbourhood in $G$ and  $d_G(x):=|N_G(x)|$ is the degree of $x$ in $G$. We let $\delta(G)$ and $\Delta(G)$ be the minimum and maximum degree of $G$, respectively, and write $d(G)=2e(G)/|G|$ for the average degree of the vertices in $G$. Let $X$ and $Y$ be two disjoint subsets of $V(G)$. We write $N_G(X) = \bigcup_{x \in X}N_G(x)$ to be the union of all neighbourhoods of vertices in $X$. We denote by $G[X]$ the subgraph of $G$ induced by the vertices of $X$, and $E(X, Y)$ the set of edges joining $X$ and $Y$. We write $G[X, Y]$ for the bipartite subgraph of $G$ with bipartition $X \cup Y$ and edge set $E(X, Y)$. If $G$ is bipartite, we write $G = (A, B)$ to indicate $A$ and $B$ are the bipartition classes of $G$, with $|A| \ge |B|$. Say a set $X\subset V(G)$ is $k$-independent if every pair of distinct vertices in $X$ are at distance at least $k$ in $G$.

Suppose $G$ has a blue/red colouring. We let $\red{G}$ and $\blue{G}$ denote the graphs spanned by the red edges and the blue edges, respectively. We write $\red{d}(x)$ for the degree of $x$ in $\red{G}$ and $\blue{d}(x)$ for the degree of $x$ in $\blue{G}$, and use similar notations for the red and blue neighbourhoods of a vertex or a set of vertices.

As usual, we write $[n]=\{1,\ldots,n\}$ for $n\in\mathbb N$ and, for $a,b,c\in\mathbb R$, we use $a=b\pm c$ to denote that $a\in [b-c,b+c]$. We will use the standard hierarchy notation, that is, for $a,b\in (0,1]$, we will use $a\ll b$ to mean that there exists a non-decreasing function $f:(0,1]\to (0,1]$ such that if $a\le f(b)$ then the subsequent statement holds. For $a,b\geq1$, we write $a\ll b$ if $1/b\ll1/a$. Hierarchies with more constants are defined in a similar way, and, for simplicity, we will sometimes ignore floor and ceiling signs when doing so does not affect the argument.

\subsection{Probabilistic tools}
\begin{lemma}[Chernoff's bound~{\cite[Corollary 2.3, Theorem 2.10]{JLR2000}}]\label{lemma:chernoff}
Let $X$ be either a binomial random variable or a hypergeometric random variable. Then, for all $0<\eps\le 3/2$,
\[\mathbb{P}\left(\big|X-\mathbb E[X]\big|\ge \eps\mathbb E[X]\right)\le 2\exp(-\eps^2\mathbb{E}[X]/3).\]
\end{lemma}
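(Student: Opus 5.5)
This lemma is quoted from Janson, \L{}uczak and Ruci\'nski~\cite{JLR2000}, so the plan is simply to recover it from the standard exponential-moment argument. Write $\mu=\mathbb E[X]$ and treat the two tails separately.

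\textbf{Upper tail, binomial case.} Let $X\sim\mathrm{Bin}(m,q)$ and fix $\lambda>0$. Markov's inequality applied to $e^{\lambda X}$ gives
\[
\mathbb P(X\ge(1+\eps)\mu)\le e^{-\lambda(1+\eps)\mu}\,\mathbb E[e^{\lambda X}].
\]
Independence and the estimate $1+x\le e^x$ give $\mathbb E[e^{\lambda X}]=(1+q(e^\lambda-1))^m\le \exp(\mu(e^\lambda-1))$. We take the optimal value $\lambda=\log(1+\eps)$. This yields
\[
\mathbb P(X\ge(1+\eps)\mu)\le \exp\bigl(-\mu\,\varphi(\eps)\bigr),\qquad \varphi(x)=(1+x)\log(1+x)-x .
\]

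\textbf{Lower tail, binomial case.} We argue in the same way with $\lambda<0$, choosing $\lambda=\log(1-\eps)$. This gives the bound $\exp(-\mu\,\varphi(-\eps))$ for $\eps<1$. When $\eps\ge 1$ the lower-tail event is $\{X\le 0\}$, and its probability $(1-q)^m\le e^{-\mu}$ is already small enough.

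\textbf{Reducing to an elementary inequality.} It remains to check that $\varphi(x)\ge x^2/3$ for $0<x\le 3/2$ and $\varphi(-x)\ge x^2/2\ge x^2/3$ for $0<x<1$.
\begin{itemize}
\item For the second inequality, note that $\varphi(-x)-x^2/2$ vanishes at $0$ and has derivative $-\log(1-x)-x\ge 0$.
\item For the first inequality, let $g(x)=\varphi(x)-x^2/3$. Then $g(0)=g'(0)=0$, and $g''(x)=1/(1+x)-2/3$. This second derivative is positive for $x<1/2$ and negative afterwards, so $g$ is first convex and then concave on $[0,3/2]$. Hence it suffices to check $g\ge 0$ at the endpoint. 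At $x=3/2$ we compute $2.5\log 2.5-1.5\approx 0.791\ge 0.75$.
\end{itemize}
Adding the two tail bounds gives the factor $2$.

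\textbf{Hypergeometric case.} This is the step requiring an outside idea. We use Hoeffding's observation that if $X$ is hypergeometric and $Y$ is binomial with the same mean, then $\mathbb E f(X)\le\mathbb E f(Y)$ for every convex $f$. It comes from comparing sampling without replacement against sampling with replacement. Applying this with $f(x)=e^{\lambda x}$ shows that the moment generating function bounds above carry over verbatim, so the same estimates hold for $X$.

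I expect this comparison to be the only nontrivial ingredient. Since the paper cites~\cite[Theorem 2.10]{JLR2000} for it, I would invoke that result rather than reprove it.
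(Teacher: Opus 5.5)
Your proposal is correct and is essentially the standard argument behind the results the paper cites; the paper gives no proof of its own, only the reference to Janson, \L{}uczak and Ruci\'nski. That argument is the exponential-moment bound with $\varphi(x)=(1+x)\log(1+x)-x$ for the binomial tails, plus Hoeffding's convex-order comparison with the binomial for the hypergeometric case, exactly as you describe. The only cosmetic difference is that you check $\varphi(x)\ge x^2/3$ on $(0,3/2]$ from the convex/concave shape of $\varphi(x)-x^2/3$ and a numerical value at $x=3/2$. The cited source instead deduces it from $\varphi(x)\ge x^2/\bigl(2(1+x/3)\bigr)$, which also explains why $3/2$ is precisely the threshold.
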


\begin{lemma}[McDiarmid's inequality~{\cite[Lemma 1.2]{McDiarmid_1989}}]\label{lemma:mcdiarmid}
Let $X_1,\ldots,X_m$ be independent random variables taking values in a set $\Omega$. Let $c_1,\ldots,c_m\geq 0$ and suppose $f:\Omega^m\to\mathbb{R}$ is a function such that for every $i\in[m]$ and every $x_1,\ldots,x_m,x_i'\in\Omega$, we have $\Big|f(x_1,\ldots,x_i,\ldots,x_m)-f(x_1,\ldots,x_i',\ldots,x_m)\Big|\leq c_i$.
Then, for all $t>0$,
\[\mathbb{P}\left(\Big|f(X_1,\ldots,X_m)-\mathbb{E}[f(X_1,\ldots,X_m)]\Big|\geq t\right)\leq2\exp\left(\frac{-2t^2}{\sum_{i=1}^mc_i^2}\right).\]
\end{lemma}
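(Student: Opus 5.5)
This is the classical bounded-differences inequality. I would prove it by the martingale method: build the Doob martingale of $f$, bound the conditional moment generating function of each increment with Hoeffding's lemma, and finish with a Chernoff-type exponential Markov bound.

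\textbf{Martingale and bounded increments.} Write $Z=f(X_1,\ldots,X_m)$ and let $\mathcal F_i=\sigma(X_1,\ldots,X_i)$. Set $Z_i=\EE[Z\mid \mathcal F_i]$, so that $Z_0=\EE[Z]$, $Z_m=Z$, and the increments are $D_i=Z_i-Z_{i-1}$. By independence, $Z_i=g_i(X_1,\ldots,X_i)$, where
\[g_i(x_1,\ldots,x_i)=\EE\bigl[f(x_1,\ldots,x_i,X_{i+1},\ldots,X_m)\bigr],\]
the expectation being over the remaining, still independent, variables. Conditioned on $\mathcal F_{i-1}$, the increment $D_i$ therefore lies between
\[L_i=\inf_x g_i(X_1,\ldots,X_{i-1},x)-Z_{i-1}\quad\text{and}\quad U_i=\sup_x g_i(X_1,\ldots,X_{i-1},x)-Z_{i-1}.\]
For any $x,x'$, the difference $g_i(\ldots,x)-g_i(\ldots,x')$ is the expectation of $f(\ldots,x,\ldots)-f(\ldots,x',\ldots)$ over the same distribution of $(X_{i+1},\ldots,X_m)$. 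Independence is used exactly here: conditioning on $X_i$ does not change the law of the later coordinates. The bounded-difference hypothesis then gives $U_i-L_i\le c_i$, with $L_i,U_i$ both $\mathcal F_{i-1}$-measurable.

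\textbf{Hoeffding's lemma and the exponential bound.} Conditioned on $\mathcal F_{i-1}$, $D_i$ has mean zero and lies in an interval of length at most $c_i$. Hoeffding's lemma then gives $\EE[e^{sD_i}\mid\mathcal F_{i-1}]\le e^{s^2c_i^2/8}$ for every $s\in\mathbb R$. I would prove the lemma in two steps. First, convexity of $e^{sx}$ bounds the moment generating function by that of a two-point distribution on the interval's endpoints. Second, writing $\phi(s)$ for the logarithm of that two-point moment generating function, one has $\phi(0)=\phi'(0)=0$ and $\phi''\le c_i^2/4$, since $\phi''$ is the variance of a distribution on an interval of length $c_i$. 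Taylor's theorem then gives $\phi(s)\le s^2c_i^2/8$.

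Peeling off one increment at a time via the tower property yields
\[\EE\bigl[e^{s(Z-\EE Z)}\bigr]\le \exp\Bigl(\frac{s^2}{8}\sum_{i=1}^m c_i^2\Bigr).\]
Markov's inequality applied to $e^{s(Z-\EE Z)}$ gives
\[\PP(Z-\EE Z\ge t)\le \exp\Bigl(-st+\frac{s^2}{8}\sum_{i=1}^m c_i^2\Bigr),\]
and choosing $s=4t/\sum_i c_i^2$ gives $\exp(-2t^2/\sum_i c_i^2)$. Applying the same argument to $-f$, which satisfies the same bounded-difference hypothesis, handles the lower tail, and a union bound gives the factor $2$.

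\textbf{Main obstacle.} The only delicate points are the conditional range bound $U_i-L_i\le c_i$, which relies genuinely on independence, and the proof of Hoeffding's lemma. If $\Omega$ is uncountable, one should also check measurability of $g_i$ and of the infimum and supremum. This can be avoided by working directly with the conditional range, or by first assuming $\Omega$ countable and then approximating.
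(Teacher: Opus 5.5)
Your argument is correct. It is the standard Doob-martingale proof: independence gives each increment a conditional range of length at most $c_i$, Hoeffding's lemma bounds each increment, an optimised exponential Markov bound handles one tail, and applying the same argument to $-f$ with a union bound gives the factor $2$. This is essentially how McDiarmid proves the result in the survey the paper cites; the paper gives no proof of its own and only quotes the inequality, so there is no different route to compare.
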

A sequence of random variables  $(X_i)_{i \geq 0}$ is a submartingale if $\mathbb E[X_{i+1} \mid X_i, \ldots, X_0] \geq X_i$ for each $i\geq 0$. We will use the following Azuma-type bound for submartingales.

\begin{lemma}[Azuma's inequality~\cite{wormald1999differential}]\label{lem:azuma}
Let $(X_i)_{i \geq 0}$ be a submartingale and let $c_i>0$ for each $i\geq 1$. If $|X_i -X_{i-1}| <c_i$ for each $i\geq 1$, then, for each $n\geq 1$,
\[
\mathbb{P}(X_n-X_0\leq t) \leq 2 \exp \left(-\frac{t^2}{2\sum_{i=1}^nc_i^2} \right).
\]
\end{lemma}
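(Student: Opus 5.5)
This is the standard Azuma--Hoeffding inequality specialised to submartingales. The plan is to bound the lower tail, where $X_n$ falls well below $X_0$; note that the event should read $X_n - X_0 \le -t$ for $t>0$. We use the exponential moment method: bound $\mathbb E[e^{-\lambda(X_n-X_0)}]$, apply Markov's inequality, and optimise over $\lambda>0$.

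Set $D_i = X_i - X_{i-1}$ and write $\mathcal F_{i-1}$ for the information in $X_0,\ldots,X_{i-1}$. The submartingale property gives $\mathbb E[D_i\mid \mathcal F_{i-1}]\ge 0$, and the hypothesis gives $|D_i|<c_i$.

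\textbf{Step 1: one increment.} Since $x\mapsto e^{-\lambda x}$ is convex on $[-c_i,c_i]$, it lies below its chord:
\[
e^{-\lambda D_i}\le \frac{c_i - D_i}{2c_i}e^{\lambda c_i} + \frac{c_i + D_i}{2c_i}e^{-\lambda c_i}.
\]
Taking conditional expectations, the coefficient of $D_i$ is $(e^{-\lambda c_i}-e^{\lambda c_i})/(2c_i)\le 0$ and $\mathbb E[D_i\mid\mathcal F_{i-1}]\ge 0$. Hence
\[
\mathbb E[e^{-\lambda D_i}\mid \mathcal F_{i-1}]\le \cosh(\lambda c_i)\le e^{\lambda^2c_i^2/2},
\]
where the last step compares Taylor series, using $(2k)!\ge 2^k k!$. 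This is the only place the submartingale direction is used, and it is the step that needs care. The sign matches the lower tail; the upper tail would not follow.

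\textbf{Step 2: iterate.} Condition successively on $\mathcal F_{n-1},\mathcal F_{n-2},\ldots$ to obtain
\[
\mathbb E\bigl[e^{-\lambda(X_n-X_0)}\bigr]\le \exp\Bigl(\frac{\lambda^2}{2}\sum_{i=1}^n c_i^2\Bigr).
\]

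\textbf{Step 3: conclude.} By Markov's inequality,
\[
\mathbb P(X_n-X_0\le -t)\le \exp\Bigl(-\lambda t+\frac{\lambda^2}{2}\sum_{i=1}^n c_i^2\Bigr).
\]
Choosing $\lambda = t/\sum_i c_i^2$ gives the bound $\exp\bigl(-t^2/(2\sum_i c_i^2)\bigr)$. This is stronger than the stated bound by the factor $2$, which costs nothing.

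Beyond the sign check in Step 1, the remaining work is routine: the conditional expectations must be taken with respect to the natural filtration of the process. The paper's definition of submartingale uses exactly this filtration, so no further difficulty arises.
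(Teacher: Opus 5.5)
Your proof is correct. It is the standard exponential-moment argument for this inequality: bound each increment via convexity (Hoeffding's lemma), iterate with the tower property, apply Markov, and optimise $\lambda$. The paper does not prove the lemma itself and simply cites it from Wormald.

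Your correction of the event to $X_n-X_0\le -t$ with $t>0$ is also right and necessary. As printed, the bound fails for a constant process once $t^2>2\ln 2\sum_{i}c_i^2$. Moreover, the paper's applications in Lemmas~\ref{lem:exact:1} and~\ref{thm:KSS:bipartite} need exactly this lower-tail version.
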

\subsection{Trees}

A rooted tree $T$ is a tree in which a vertex $r\in V(T)$ has been identified as the root. This induces a partial order on the vertices of $T$, where for each $xy \in E(T)$ we say $x$ is the \textit{parent} of $y$ ($y$ is the \textit{child} of $x$) if $x$ lies on the unique path from $y$ to $r$.
\begin{fact}\label{fact:BddDegTreeBipartition}Let $T$ be an $n$-vertex tree with $\Delta(T)\le \Delta$, and suppose $T$ has bipartition classes of size $n_1$ and $n_2$, with $n_1 \ge n_2$. Then, $n_1 \le \frac{\Delta -1}{\Delta}n + \frac 1 \Delta $ and $n_2 \ge \frac{n-1}{\Delta}$.
\end{fact}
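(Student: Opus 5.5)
We argue by bounding the smaller class $n_2$ from below; the bound on $n_1$ then follows from $n_1 = n - n_2$. Every edge of $T$ has exactly one endpoint in each bipartition class, so, letting $B$ denote the class of size $n_2$, the number of edges satisfies
\[
n-1 = e(T) = \sum_{v\in B} d_T(v) \le \Delta\, n_2 .
\]
This immediately gives $n_2 \ge \frac{n-1}{\Delta}$, the second claim.

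For the first claim we substitute $n_2 = n - n_1$ into this inequality:
\[
n - n_1 \ge \frac{n-1}{\Delta}, \quad\text{so}\quad n_1 \le n - \frac{n-1}{\Delta} = \frac{\Delta-1}{\Delta}n + \frac{1}{\Delta}.
\]

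The argument is fully elementary, so there is no real obstacle. We only need two minor observations. First, the assumption $n_1\ge n_2$ is not used in the counting: the same bound holds for either class. Second, the degenerate case $n=1$ (where $n_2=0$) is trivially consistent, since then $\frac{n-1}{\Delta}=0$. No earlier result of the paper is needed.
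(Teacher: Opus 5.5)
Your proof is correct. Counting the $n-1$ edges of $T$ through the degrees of the vertices in one colour class gives $n-1\le \Delta n_2$, and the bound on $n_1$ follows from $n_1=n-n_2$; the paper states this as a fact without proof, and this double-counting is the standard justification it implicitly relies on (the same edge count $n-1=\sum_{a\in A}d_T(a)$ is used later in the proof of Lemma~\ref{thm:KSS:bipartite}).
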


The next few results allow us to find a cut vertex/edge that separates the tree into parts with controlled sizes.
%A \textit{bare path} in a tree $T$ is a path whose internal vertices all have degree 2 in $T$. We need the following result, which says that a tree either has many leaves or many bare paths.
%\begin{lemma}[Lemma~2.1 in~\cite{krivelevich2010embedding}]\label{lem:barepaths}Let $n,k,\ell\in\mathbb N$ and let $T$ be an $n$-vertex tree. If $T$ has at most $\ell$ leaves, then $T$ contains at least $n/(k+1)-2\ell$ vertex-disjoint bare paths, each of length $k$. 
%\end{lemma}

\begin{lemma}[Lemma 4.2 in~\cite{Besomi2019}]\label{lem:cutVertex}	Let $T$  be an $n$-vertex tree, and let $x$ be a leaf of $T$. Then, $T$ has a vertex $z$ such that every component of $T-z$ has at most $\lfloor\frac {n-1}{2}\rfloor$ vertices, except for the component containing $x$ which has at most $\lceil\frac {n-1}{2}\rceil$ vertices.
\end{lemma}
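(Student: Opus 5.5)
We prove Lemma~\ref{lem:cutVertex} by a walk that starts at the leaf $x$ and always moves towards the large side. For a vertex $z$ and a neighbour $y$ of $z$, let $T_z(y)$ denote the component of $T-z$ containing $y$. Set $a=\lfloor\frac{n-1}{2}\rfloor$ and $b=\lceil\frac{n-1}{2}\rceil$, so that $a+b=n-1$.

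Start the walk at $z_0=x$. Since $x$ is a leaf, $T-x$ has a single component, and it contains $x$'s unique neighbour; this component has $n-1$ vertices. At the current vertex $z$ we check whether every component of $T-z$ not containing $x$ has at most $a$ vertices. If so, we stop. If not, some neighbour $y$ of $z$ satisfies $|T_z(y)|\ge a+1$ and $x\notin T_z(y)$, and we move to $z:=y$.

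\emph{This neighbour $y$ is unique.} If two components of $T-z$ each had at least $a+1$ vertices, then $2(a+1)\le n-1$, which would force $a<\frac{n-1}{2}-\frac12$. That is impossible, since $a\ge \frac{n-2}{2}$.

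\emph{The walk moves strictly away from $x$ and terminates.} The walk always enters a component not containing $x$, so after a move from $z$ to $y$, the vertex $z$ lies on the path from $y$ to $x$. The walk therefore never revisits a vertex. Moreover $|T_z(y)|$ strictly decreases along the walk, since each new component is contained in the previous one minus a vertex. As $T$ is finite, the walk stops at some vertex $z$.

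\emph{Bound on the component containing $x$ at the stopping vertex.} Suppose first that $z\neq x$, and let $w$ be the previous vertex of the walk. Then $|T_w(z)|\ge a+1$. The component of $T-z$ containing $x$ is exactly $T-T_w(z)$, and in particular it contains $w$. Hence its size is
\[
n-|T_w(z)|\le n-a-1=b.
\]
By the stopping rule, every other component of $T-z$ has at most $a$ vertices, which is what the lemma requires.

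\emph{The degenerate case $z=x$.} This happens only if the walk stops immediately, which requires $n-1\le a$. That holds only for $n=1$, where the statement is trivial. (For $n=2$ the walk moves once, to the other vertex.)

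We expect no real obstacle; the only point needing care is the parity check that $2(a+1)>n-1$, which guarantees the next step of the walk is uniquely determined.
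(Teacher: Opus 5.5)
The paper gives no proof of this lemma (it is imported from \cite{Besomi2019}), and your argument is correct and complete. The size bound $n-|T_w(z)|\le n-a-1=b$ for the component containing $x$ holds, $|T_z(y)|$ strictly decreases so the walk terminates, and the degenerate case $n=1$ is handled; the uniqueness of the heavy neighbour is true but not actually needed. Your walk is the iterative form of the standard argument: root $T$ at $x$ and let $z$ be a vertex furthest from $x$ whose subtree has at least $\lfloor\frac{n-1}{2}\rfloor+1$ vertices. The paper uses this same device to prove Lemmas~\ref{lem:cutvertex:2} and~\ref{lem:cutedge:0}.
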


\begin{lemma}\label{lem:cutvertex:2}Let $1/n\ll\gamma\le 1$ and let $T$ be an $n$-vertex tree. Then, there is a vertex $z$ and edge-disjoint subtrees $T_1,T_2\subset T$ such that $\gamma n\le |T_1|\le 2\gamma n$, $T_1$ and $T_2$ intersects exactly at $z$, and $T=T_1\cup T_2$.\end{lemma}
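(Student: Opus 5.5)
We prove Lemma~\ref{lem:cutvertex:2} by a descent from the root. The idea is to move down the tree until the subtree below the current vertex is just about to drop below $\gamma n$ vertices, and then collect a suitable group of child subtrees. Note that no degree bound on $T$ is assumed, so we cannot simply take a single child subtree; we will need to group several of them.

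Root $T$ at an arbitrary vertex $r$. For each vertex $v$, let $T(v)$ denote the subtree consisting of $v$ and all its descendants. Since $|T(r)|=n\ge\gamma n$, there is a deepest vertex $z$ with $|T(z)|\ge \gamma n$. Find it by starting at $r$ and repeatedly moving to a child whose subtree still has at least $\gamma n$ vertices, for as long as such a child exists. At the final vertex $z$, every child $c$ of $z$ satisfies $|T(c)|<\gamma n$, while
\[\sum_{c}|T(c)| = |T(z)|-1 \ge \gamma n-1.\]

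Next, order the children $c_1,c_2,\dots$ of $z$ arbitrarily and add their subtrees $T(c_1),T(c_2),\dots$ one at a time. Stop at the first index $k$ where the running total $s_k=\sum_{i\le k}|T(c_i)|$ reaches at least $\gamma n-1$; such a $k$ exists by the displayed inequality. Each summand is less than $\gamma n$, so
\[ s_k< (\gamma n-1)+\gamma n \le 2\gamma n-1.\]

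Now define $T_1$ to be the subtree induced by $z$ together with $T(c_1)\cup\dots\cup T(c_k)$. Then $|T_1|=s_k+1$, which lies in $[\gamma n,2\gamma n]$. Define $T_2=T-\bigcup_{i\le k}V(T(c_i))$. This keeps $z$ and everything else, so it is connected and hence a tree. By construction $T_1$ and $T_2$ are edge-disjoint, they meet exactly in $z$, and $T_1\cup T_2=T$.

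The one delicate point is the boundary case. If $\gamma n>n$, which can only happen when $\gamma$ is close to $1$ and rounding interferes, the required size range is unattainable. The condition $1/n\ll\gamma\le1$ handles this: when $\gamma n$ is close to $n$ we may take $T_1=T$ and let $T_2$ be the single vertex $z$, and integrality is absorbed by ignoring floors as the paper does. I expect no real obstacle beyond this bookkeeping, since the essential step is just the greedy grouping of child subtrees, each of which is smaller than $\gamma n$.
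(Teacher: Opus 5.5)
Your proof is correct and is essentially the paper's argument: root the tree, take a deepest vertex $z$ whose subtree has at least $\gamma n$ vertices, and group child subtrees (each of size less than $\gamma n$) until the threshold $\gamma n-1$ is reached; your greedy prefix plays the role of the paper's inclusion-minimal index set $I$. Your closing boundary-case discussion is unnecessary: $\gamma\le 1$ already forces $\gamma n\le n$, and when $\gamma=1$ the construction itself returns $T_1=T$ and $T_2=\{z\}$.
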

\begin{proof} Let $t$ be an arbitrary vertex we choose as the root of $T$. Let $z$ be a vertex a maximal distance from $t$ such that the tree $T_0$ formed by $z$ and all vertices below $z$ in the ordering given by $t$ satisfies $|T_0|\ge \gamma n$. Let $S_1,\ldots,S_m $ be the components of $T_0-z$ and note that, by maximality of $z$, $|S_i|<\gamma n$ for each $i\in [m]$. Let $I\subset [m]$ be minimal subject to $|\cup_{i\in [m]}S_i|+1\ge \gamma n$, nothing that $I$ is well-defined as $|T_0|\ge \gamma n$. The tree $T_1$  formed by $z$ and $\cup_{i\in I}S_i$ satisfies the conclusions, as $|T_1|> 2\gamma n$ would contradict the minimality of $I$.\end{proof}

The following three lemmas apply only to trees of bounded maximum degree. The first allows us to find a single edge connecting a subtree of desired size to the rest of the tree. 

\begin{lemma}\label{lem:cutedge:0}Let $\Delta\ge 2$ and $1/n\ll\gamma\ll1/\Delta$, and let $T$ be an $n$-vertex tree with $\Delta(T)\le \Delta$. Then, there is an edge $e\in E(T)$ and a component $T'$ of $T-e$ such that $\gamma n\le |T'|\le \Delta\gamma n$.\end{lemma}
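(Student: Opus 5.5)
We root $T$ at an arbitrary vertex $t$ and mimic the proof of Lemma~\ref{lem:cutvertex:2}, using the degree bound to replace a cut vertex by a cut edge. For each vertex $v$, write $T_v$ for the subtree consisting of $v$ and all its descendants. Then $|T_t|=n\ge\gamma n$, so the set of vertices $v$ with $|T_v|\ge \gamma n$ is non-empty. Among them, choose $z$ at maximal distance from $t$. By maximality, every child $c$ of $z$ satisfies $|T_c|<\gamma n$.

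We then bound $|T_z|$ from above. The vertex $z$ has at most $\Delta$ children if $z=t$, and at most $\Delta-1$ children otherwise. Hence
\[
\gamma n\le |T_z| = 1+\sum_{c}|T_c| \le 1+\Delta(\gamma n-1)\le \Delta\gamma n .
\]

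Two cases remain.
\begin{itemize}
\item If $z\neq t$, let $e$ be the edge from $z$ to its parent. Then $T_z$ is a component of $T-e$, and $\gamma n\le|T_z|\le \Delta\gamma n$, so we are done. (Here the count is even slightly better, since $z$ has at most $\Delta-1$ children.)
\item If $z=t$, then every child $c$ of the root satisfies $|T_c|<\gamma n$. This forces $n\le 1+\Delta\gamma n$, which is impossible since $\gamma\ll1/\Delta$ gives $\Delta\gamma<1/2$ and $n$ is large. So this case cannot occur.
\end{itemize}

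The only point needing care is this root case, and it is handled exactly by the hierarchy $1/n\ll\gamma\ll1/\Delta$. Everything else is a direct counting argument.
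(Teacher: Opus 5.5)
Your proof is correct and is essentially the paper's argument: both root $T$, take a deepest vertex whose subtree exceeds a threshold, and use $\Delta(T)\le\Delta$ to control sizes. The only difference is that you threshold at $\gamma n$ and cut the edge above $z$, which is why you need the separate (easily dismissed) root case. The paper instead thresholds at $\Delta\gamma n$ and cuts the edge from $z$ to a child subtree of size at least $\gamma n$.
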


\begin{proof} Let $t$ be an arbitrary vertex we choose as the root of $T$. Let $z$ be a vertex a maximal distance from $t$ such that the tree $T_0$ formed by $z$ and all vertices below $z$ in the ordering given by $t$ satisfies $|T_0|> \Delta\gamma n$. Let $S_1,\ldots,S_m $ be the components of $T_0-z$ and note that, by maximality of $z$, $|S_i|\le \Delta \gamma n$ for each $i\in [m]$. Also note that as $z$ has degree at most $\Delta$, one of these components $S_j$ has $|S_j| \ge \gamma n$. Choosing $s$ to be the edge between $z$ and the root of $S_j$ gives the desired property, with $T' = S_j$.
\end{proof}

\begin{lemma}[Lemma 4.4 in~\cite{Besomi2019}]\label{lem:numberPartition}
Let $m,n \in \NN$ and let $(a_i)_{i\in [m]}$ be a sequence of integers  such that $0 < a_i \leq \lceil n/2\rceil$, for each $i\in [m]$, and $\sum_{i\in [m]} a_i \le n$. Then, there exists a partition $[m]=I_1\cup I_2\cup I_3$ such that $I_3$ contains at most one element and 
\[\sum_{i\in I_3} a_i \leq \sum_{i\in I_2} a_i \leq \sum_{i\in I_1} a_i\leq \lceil n/2\rceil.\]
\end{lemma}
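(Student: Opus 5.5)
We prove Lemma~\ref{lem:numberPartition} by a greedy construction followed by a short case analysis, writing $h:=\lceil n/2\rceil$. Relabel so that $a_1\ge a_2\ge\dots\ge a_m$. Build $I_1$ greedily: go through $i=1,\dots,m$ and add $i$ to $I_1$ whenever $\sum_{j\in I_1}a_j+a_i\le h$. Since $a_1\le h$, the set $I_1$ is non-empty and $s_1:=\sum_{i\in I_1}a_i\le h$. Let $R=[m]\setminus I_1$. If $R=\emptyset$, take $I_2=I_3=\emptyset$ and we are done. Otherwise every $i\in R$ was rejected, so $s_1+a_i>h$ for each $i\in R$.

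The key estimate is $\sum_{i\in R}a_i\le n-s_1$. If $R$ is small enough, we simply split it. Concretely, suppose first that $\sum_{i\in R}a_i\le s_1$. Put the largest element $r$ of $R$ into $I_3$ and the rest of $R$ into $I_2$. We then need $a_r\le s_2:=\sum_{i\in R\setminus\{r\}}a_i\le s_1$. The upper bound is immediate. The lower bound may fail (for instance if $R=\{r\}$ with $a_r>0$, so $s_2=0<a_r$). In that case swap the roles: put $r$ into $I_2$ and $R\setminus\{r\}$ into $I_3$. This is valid only when $|R\setminus\{r\}|\le 1$, so the argument needs more care and a cleaner invariant.

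The cleaner route is to use the ordering. Because $I_1$ is built greedily in decreasing order, the first rejected index $r$ satisfies $a_r> h-s_1$, and every element of $I_1$ added before $r$ is at least $a_r$. We distinguish two cases.
\begin{itemize}
\item[(a)] $|I_1|\ge 2$, or $I_1=\{1\}$ with $a_1\ge a_r$. In this case $s_1\ge a_r$ and $s_1+a_r>h$. Put $I_3=\{r\}$ and $I_2=R\setminus\{r\}$. Then $s_2\le n-s_1-a_r<n-h\le h$. We still need $a_r\le s_2\le s_1$.
\item[(b)] Otherwise, if $s_2<a_r$, swap the contents of $I_2$ and $I_3$ when $|R\setminus\{r\}|\le1$. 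When $|R\setminus\{r\}|\ge 2$, every element of $R\setminus\{r\}$ was also rejected, so each has size $>h-s_1$. Using that $n-s_1-a_r$ is small, this forces $h-s_1$ to be small, and then $R\setminus\{r\}$ can be greedily refilled into $I_1$. Iterating this exchange, which moves one element from $R$ to $I_1$ while keeping $s_1\le h$, eventually terminates.
\end{itemize}

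The main obstacle is guaranteeing $s_2\le s_1$ and $a_r\le s_2$ simultaneously. For this I would use a potential argument: among all partitions with $|I_3|\le1$ and all three sums at most $h$ (such partitions exist, e.g.\ by the greedy construction above, since $n-h\le h$), choose one maximising $s_1$, then $s_2$ lexicographically. A local exchange then shows the ordering $s_3\le s_2\le s_1$ holds. If $s_2<s_3$, swap $I_3$ with a single element of $I_2$ or with all of $I_2$. If $s_1<s_2$, relabel. Each violation yields a lexicographically larger vector, a contradiction.
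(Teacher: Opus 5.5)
The paper does not prove this lemma (it is quoted from Besomi--Pavez-Sign\'e--Stein), so your argument has to stand on its own. As written it does not close.

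\textbf{The greedy routes.} These are not proofs. In (a) you concede that $a_r\le s_2\le s_1$ is still open. The repair in (b) cannot work, because every element of $R$ was rejected precisely since it does not fit into $I_1$. For example, take $n=14$ and $a=(5,5,1,1,1,1)$. Your greedy gives $s_1=7=h$ and $a_r=5$, and $R\setminus\{r\}$ consists of two $1$s with sum $2<a_r$; nothing can be moved into $I_1$. The correct move is the opposite one: keep the large rejected element in $I_2$ and put a \emph{smallest} element of $R$ into $I_3$.

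\textbf{The extremal argument.} This is the right idea, but the exchange step is asserted without checking that the new partition is admissible, and that step carries the entire content of the lemma. If $s_3=a_c>s_2$ and you swap $c$ with some $j\in I_2$, you need $s_2-a_j+a_c\le h$. This fails for admissible partitions in general. Take $n=20$ with items $5$, $9$ and five $1$s, where $I_1$ holds the $5$, $I_2$ the five $1$s and $I_3$ the $9$. This is admissible with $s_2\le s_1$, yet the swap gives $13>10$.

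The swap works for your lexicographic maximiser only because of a fact you never state. Maximality of $s_1$ forces $s_1+a_j>h$ for every $j\notin I_1$; otherwise moving $j$ into $I_1$ keeps admissibility and increases $s_1$. Then $s_2-a_j+a_c=\sum_{i\notin I_1}a_i-a_j\le n-s_1-a_j<n-h=\lfloor n/2\rfloor\le h$, so the swap is admissible. With this line added, plus the case $I_2=\emptyset$ and your relabelling of $I_1,I_2$, the proof is complete.

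\textbf{A shorter route.} The same observation gives a direct proof without lexicographic bookkeeping. Take $I_1$ of maximum sum subject to $s_1\le h$ and let $R=[m]\setminus I_1$; the cases $|R|\le 1$ are trivial. If $|R|\ge 2$, put a minimal element $r$ of $R$ into $I_3$ and the rest into $I_2$. Minimality of $a_r$ gives $a_r\le s_2$. Also $s_2\le n-s_1-a_r<n-h\le h$, so $I_2$ is itself a competitor for $I_1$, whence $s_2\le s_1$.
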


\begin{corollary}\label{cor:t/2_forest}
Let $n,\Delta\in \NN$ and let $T$ be an $n$-vertex tree with $\Delta(T) \le \Delta$. Then, there exists a vertex $z\in V(T)$ and vertex-disjoint forests $F_1,F_2\subset T$ such that
\begin{enumerate}[label=\upshape{(\roman{enumi})}]
    \item $T-z = F_1 \cup F_2$,
    \item $|F_1|\le \lceil n/2 \rceil$, and
    \item $F_2$ has a proper two-colouring such that the largest colour class has order at most $\frac{(\Delta-1)n}{2\Delta}+ \frac 1 {\Delta}$.
\end{enumerate}  
\end{corollary}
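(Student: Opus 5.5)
We would combine Lemma~\ref{lem:cutVertex} with Lemma~\ref{lem:numberPartition}, and then use Fact~\ref{fact:BddDegTreeBipartition} to bound the colour classes of the leftover forest.

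\textbf{Choosing the cut vertex.} Fix a leaf $x$ of $T$ and apply Lemma~\ref{lem:cutVertex} to obtain a vertex $z$. Every component of $T-z$ then has at most $\lfloor (n-1)/2\rfloor$ vertices, except the one containing $x$, which has at most $\lceil (n-1)/2\rceil$ vertices. Let $S_1,\dots,S_m$ be the components of $T-z$, so that $m\le\Delta$ and $\sum_i|S_i|=n-1$. Set $a_i=|S_i|$; then $0<a_i\le\lceil (n-1)/2\rceil$.

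\textbf{Splitting the components.} Apply Lemma~\ref{lem:numberPartition} to $(a_i)$ with $n-1$ in place of $n$. This gives a partition $[m]=I_1\cup I_2\cup I_3$ with $|I_3|\le1$ and
\[\sum_{i\in I_3}a_i\le\sum_{i\in I_2}a_i\le\sum_{i\in I_1}a_i\le\lceil (n-1)/2\rceil\le\lceil n/2\rceil.\]
Define $F_1=\bigcup_{i\in I_1}S_i$ and $F_2=\bigcup_{i\in I_2\cup I_3}S_i$. Properties (i) and (ii) are then immediate.

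\textbf{Bounding a colour class of $F_2$.} We have $|F_2|=n-1-|F_1|$. Since the sums are ordered, $|F_1|\ge\sum_{I_2}a_i\ge\sum_{I_3}a_i$, so $|F_1|\ge |F_2|/2$, which gives $|F_2|\le 2(n-1)/3$. However, $F_2$ is a forest, and we are free to choose its proper $2$-colouring independently on each component. The key observation is that each component $S_i$ is a tree with maximum degree at most $\Delta$, and its neighbour of $z$ has degree at most $\Delta-1$ inside $S_i$. Applying Fact~\ref{fact:BddDegTreeBipartition} to each $S_i$, the larger class of $S_i$ has size at most $\frac{\Delta-1}{\Delta}|S_i|+\frac1\Delta$. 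Summing over components gives the bound
\[\frac{\Delta-1}{\Delta}|F_2|+\frac{|I_2\cup I_3|}{\Delta},\]
and the additive term is too large in general. A better route is to colour $F_2$ together with $z$: the tree $T_2=T[V(F_2)\cup\{z\}]$ has maximum degree at most $\Delta$. By Fact~\ref{fact:BddDegTreeBipartition}, its larger class has size at most $\frac{\Delta-1}{\Delta}(|F_2|+1)+\frac1\Delta$, and its restriction to $F_2$ is a proper colouring of $F_2$. If $z$ lies in the larger class, we may subtract $1$.

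\textbf{Main obstacle.} The remaining task is to make the arithmetic close, using $|F_2|\le (n-1)/2$ or similar. Indeed, if $|F_2|\le n/2$, then the bound above gives at most $\frac{\Delta-1}{\Delta}\cdot\frac{n+2}{2}+\frac1\Delta$. This is off from the target $\frac{(\Delta-1)n}{2\Delta}+\frac1\Delta$ by about $\frac{\Delta-1}{\Delta}$, and we would absorb this using the $-1$ when $z$ is in the larger class, or else by swapping the roles of $F_1$ and $F_2$.

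So we would first try to arrange $|F_2|\le|F_1|$, relabelling if needed. If $|F_2|>|F_1|$, then since $|F_1|\le\lceil n/2\rceil$ is preserved under swapping only when $|F_2|\le\lceil n/2\rceil$, we must check this. The chain of inequalities gives $|F_2|\le 2|F_1|$, so the hardest case is when $I_3$ is nonempty and $|F_2|$ exceeds $n/2$. In that case we would instead put the single component indexed by $I_3$, which has size at most $(n-1)/3$, into a separate analysis, and bound its colour class via the $z$-rooted argument.

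I expect the bookkeeping of this $\pm1/\Delta$ slack to be the only real difficulty.
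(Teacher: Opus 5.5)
\textbf{Verdict: the proposal has a genuine gap, and it cannot be closed, because (iii) as stated is false for $\Delta=3$.}

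\textbf{Where your plan breaks.} Your set-up matches the paper's: Lemma~\ref{lem:cutVertex}, then Lemma~\ref{lem:numberPartition}, and $F_1=H_1$ where $H_j=\bigcup_{i\in I_j}S_i$. So (i) and (ii) are fine. The case $|F_2|\le n/2$ also closes if you orient the branches of $F_2$ to balance the two classes, which gives at most $\frac{\Delta-1}{\Delta}|F_2|+\frac1\Delta$. The gap is the case you defer, $|F_2|>n/2$, and it is not a matter of $\pm\frac1\Delta$ slack. If $T-z$ has three branches of about $n/3$ vertices each, $F_1$ can hold only one of them. Then $|F_2|\approx 2n/3$, and any bound of the form $\frac{\Delta-1}{\Delta}(|F_2|+1)+\frac1\Delta$ overshoots the target by about $\frac{(\Delta-1)n}{6\Delta}$. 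Colouring $F_2$ together with $z$ does not help. It fixes one colouring (all branch roots in the same class) and discards the freedom to flip branches independently.

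\textbf{The paper's route.} The paper uses exactly that freedom: it colours $H_2$ and $H_3$ so that the larger class of one is combined with the smaller class of the other. However, its last inequality,
$\frac n2-\frac{|H_i|+|H_1|}{2\Delta}+\frac1\Delta\le\frac{(\Delta-1)n}{2\Delta}+\frac1\Delta$,
would require $|H_i|+|H_1|\ge n$, whereas $|H_1|+|H_2|+|H_3|=n-1$. What the pairing argument actually yields is $\frac{n-1}{2}\bigl(1-\frac{2}{3\Delta}\bigr)+\frac1\Delta$.

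\textbf{Counterexample for $\Delta=3$.} Join a vertex $w$ to an endpoint of each of three $10$-vertex branches:
the path $P_{10}$;
$P_9$ with a pendant leaf at its second vertex;
$P_7$ with pendant leaves at its second, fourth and sixth vertices.
Then $n=31$ and $\Delta(T)=3$. The branches have bipartition classes of sizes $(5,5)$, $(6,4)$ and $(7,3)$. The bound in (iii) is $\frac{n+1}{3}=\frac{32}{3}$, so both classes of $F_2$ would need at most $10$ vertices.
\begin{enumerate}
\item If $z=w$, then $|F_1|\le 16$ allows at most one branch in $F_1$, so $F_2$ contains at least two branches. Since the three imbalances $0,2,4$ are distinct, every proper $2$-colouring of $F_2$ has a class with at least $11$ vertices.
\item If $z\neq w$, the component of $T-z$ containing $w$ has at least $21>\lceil n/2\rceil$ vertices. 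It must lie in $F_2$, which again forces a class of size at least $11$.
\end{enumerate}
The failure is not a rounding artefact. With caterpillar branches of imbalance about $0$, $s/6$ and $s/3$, each attached to $w$ at a vertex of its larger class, the excess grows linearly in $n$.

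\textbf{What can be proved.} The statement must be weakened, for example to the bound $\frac{n-1}{2}\bigl(1-\frac{2}{3\Delta}\bigr)+\frac1\Delta$. With that change, your set-up plus the pairing idea, with the branches inside $H_2$ balanced, gives a complete proof.
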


\begin{proof}
        Let $T$ be an $n$-vertex tree $\Delta(T)\le\Delta$. Then, Lemma \ref{lem:cutVertex} gives a vertex $z \in V(T)$ such that every component of $T-z$ has at most $\lceil n/2\rceil$ vertices. Let $C_1, \ldots ,C_m$ be the components of $T-z$ for some $m \le \Delta$. By Lemma \ref{lem:numberPartition}, there are pairwise disjoint subsets $I_1,I_2,I_3\subset [m]$ with $\bigcup_{i\in [3]} I_i = [m]$ such that
    \begin{equation*}\label{eq:3PartitionSize}
        \sum_{i\in I_3} |C_i| \leq \sum_{i\in I_2} |C_i| \leq \sum_{i\in I_1} |C_i|\leq \lceil n/2\rceil.
    \end{equation*}
    Let $H_j = \bigcup_{i \in I_j } C_i$ for $j \in [3]$ and note that $F_1 = H_1$ clearly satisfies the required conditions. Now, suppose we have any 2-colouring of $H_2$ and $H_3$. Letting $\{i, j\} = \{2, 3\}$ and using Fact \ref{fact:BddDegTreeBipartition}, the larger colour class of $H_i$ and the smaller colour class of $H_j$ add up to at most 
    \begin{align*}
    \frac 1 \Delta +\frac{\Delta-1}{\Delta}|H_i| + \frac{|H_j|}{2} & \le \frac{(\Delta-1)(|H_i| + |H_1|)}{2\Delta} + \frac{|H_j|}{2} + \frac 1 \Delta \\
    & = \frac{(\Delta-1)(|H_i| + |H_1|)}{2\Delta} + \frac{n - (|H_i| + |H_1|)}{2} + \frac 1 \Delta \\
    & \le \frac{n}{2} + (|H_i| + |H_1|)\frac{1}{2}\left(\frac{\Delta -1}{\Delta} - 1 \right) +\frac 1 \Delta\\
    & \le \frac{\Delta -1}{\Delta}\frac{n}{2}+ \frac 1 \Delta.
    \end{align*}
    This gives the required 2-colouring of $F_2$,
\end{proof}
The final results of this section partition a tree into a few small pieces.
\begin{definition} For $\beta\in(0,1)$, a $\beta$-decomposition of an $n$-vertex tree $T$ is a collection of pairwise edge-disjoint subtrees $T_1,\ldots, T_s\subset T$ such that 
\begin{enumerate}[label=\upshape{(\roman{enumi})}]
    \item\label{p:betadecomp:1} $E(T)=E(T_1)\cup\ldots\cup E(T_s)$, and
    \item\label{p:betadecomp:2} $|T_i|\le\beta n$ for all $i\in [s]$.
\end{enumerate}
\end{definition}

\begin{lemma}\label{beta_decomp_v2}
    Let $1/n\ll\beta<1$. Then, every $n$-vertex tree has a $\beta$-decomposition into at most $100\beta^{-2}$ subtrees.
\end{lemma}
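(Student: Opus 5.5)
We prove Lemma~\ref{beta_decomp_v2} by repeatedly splitting off pieces of size between $\gamma n$ and $2\gamma n$ using Lemma~\ref{lem:cutvertex:2}, with $\gamma$ a suitable multiple of $\beta$. The number of pieces is then controlled by a total-size count.

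Fix $\gamma=\beta/2$. Run the following procedure, maintaining a current tree $R$ (initially $R=T$) and a list of pieces. While $|R|>\beta n$, apply Lemma~\ref{lem:cutvertex:2} to $R$, but with the parameter rescaled so that the sizes refer to $n$ rather than $|R|$. Concretely, use $\gamma' := \gamma n/|R|$, so $\gamma'|R|=\gamma n$. This gives a vertex $z$ and edge-disjoint subtrees $R_1,R_2$ with $R=R_1\cup R_2$, meeting only at $z$, and with $\gamma n\le|R_1|\le 2\gamma n=\beta n$. Put $R_1$ into the list and replace $R$ by $R_2$. When $|R|\le\beta n$, add $R$ to the list and stop.

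The output has the required properties.
\begin{itemize}
\item The pieces are subtrees, pairwise edge-disjoint, and their edges cover $E(T)$. This holds because each step splits the edge set of $R$ into $E(R_1)\sqcup E(R_2)$.
\item Each piece has at most $\beta n$ vertices: every $R_1$ by the size bound above, and the final remainder by the stopping condition.
\item The process terminates. Each step gives $|R_2|=|R|-|R_1|+1\le |R|-\gamma n+1$, so $|R|$ drops by at least $\gamma n-1$ each time.
\item The number of steps is at most $n/(\gamma n-1)\le 4/\beta$ once $n$ is large. So there are at most $4/\beta+1\le 100\beta^{-2}$ pieces, since $\beta<1$.
\end{itemize}

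The one point needing care is the hypothesis of Lemma~\ref{lem:cutvertex:2} when it is applied to $R$ with parameter $\gamma'$. We need $1/|R|\ll\gamma'\le 1$. First, $\gamma'\le 1$ holds because $|R|>\beta n\ge\gamma n$. Second, $\gamma'\ge\gamma n/n=\beta/2$, and $|R|>\beta n$ is large when $1/n\ll\beta$. So the hierarchy is satisfied, provided the implicit function in Lemma~\ref{lem:cutvertex:2} is monotone, which it can be taken to be.

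One could instead avoid the hierarchy by noting that the proof of Lemma~\ref{lem:cutvertex:2} works for any integer threshold $k\le |R|$. It yields $k\le|T_1|\le 2k$ directly, here with $k=\lceil\gamma n\rceil$, provided $2\lceil\gamma n\rceil\le\beta n$. Since $1/n\ll\beta$, we can take $\gamma$ slightly below $\beta/2$ to absorb the ceiling.

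This verification is the only real obstacle. The bound $100\beta^{-2}$ is generous; the argument gives $O(\beta^{-1})$.
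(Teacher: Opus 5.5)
Your proof is correct and follows the same route as the paper: while the remaining tree has more than $\beta n$ vertices, peel off a subtree with Lemma~\ref{lem:cutvertex:2}. The only difference is the threshold: the paper applies that lemma with parameter $\beta/2$ relative to the current tree, so pieces have at least $\beta^2 n/2$ vertices, which is where $100\beta^{-2}$ comes from. You rescale to an absolute threshold $\beta n/2$, which is equally valid and gives the sharper $O(\beta^{-1})$ count.
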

\begin{proof}Starting with $T^0 = T$, at step $i\ge 1$ and while $|T^{i-1}|>\beta n$, use Lemma~\ref{lem:cutvertex:2} to find a vertex $z_i\in V(T^{i-1})$ and  edge-disjoint subtrees $T_i,T'_{i}\subset T^{i-1}$ such that $\beta|T^{i-1}|/2\le |T_i|\le \beta |T^{i-1}|$, $T_i$ and $T_i'$ intersect at $z_i$, and $T_{i-1}=T_i\cup T_i'$. Define $T^i$ by contracting $T_i$ into $z_i$. If $|T^i|\le \beta n$, then set $T_i=T^i$ and stop. This process stops with a collection of pairwise edge-disjoint subtrees $T_1,\ldots, T_s\subset T$ covering $T$  such that for $|T_i|\le \beta |T^{i-1}|\le\beta n$, for $i\in[s-1]$, and $|T_s|\le\beta n$  by definition. Moreover, for $i\in [s-1]$ we have $|T_i|\ge \beta |T_{i-1}|/2\ge \beta^2n/2$, which implies $s\le 100\beta^{-2}$.
\end{proof}
For trees with bounded degree, we can further require that the subtrees are pairwise vertex-disjoint and the connecting vertices are far apart in the tree.  If $T$ is a tree with root $r$, for $v\in V(T)$ we define $T(v)$ as the tree induced by all vertices $u\in V(T)$ that are below $v$ with respect to $r$.

\begin{lemma}\label{vtx_disjoint_beta_decomp}
    Given $\beta \in (0,1)$ and $\Delta ,k\in \NN$, let $n > {4k\Delta^{k+1}}/{\beta}$. Let $T$ be an $n$-vertex with $\Delta(T)\le \Delta$. Then there are pairwise vertex-disjoint rooted subtrees $T_1,\ldots, T_s$ such that the following holds.
    \begin{enumerate}[label=\upshape{(\roman{enumi})}]
        \item $V(T)=V(T_1)\cup\ldots\cup V(T_s)$.\label{decomp1}
        \item Letting $r_i$ be the root of the subtree $T_i$ for each $i\in [s]$, the set $\{r_1, \ldots, r_s\}$ is $k$-independent. \label{decomp2}
        \item $\beta n/ (2\Delta)\le |T_i| \leq \beta n $ for each $i\in [s]$.\label{decomp3}
        \item $s\le 2\Delta\beta^{-1}$. \label{decomp5}
    \end{enumerate}
\end{lemma}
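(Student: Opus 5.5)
We construct the decomposition greedily by cutting off subtrees from the bottom of a rooted version of $T$. The plan is to adapt the argument of Lemma~\ref{lem:cutedge:0}, adding a little care so that the roots we create stay far apart.

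\textbf{Setup and the cutting step.} Root $T$ at an arbitrary vertex $r$. We maintain a current tree $T^{j}$, with $T^0=T$, which always contains $r$. While $|T^{j-1}|>\beta n$, we do the following.

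\begin{itemize}
\item Pick a vertex $z$ of maximal depth such that $|T^{j-1}(z)|>\beta n/2$. By maximality, every child $c$ of $z$ satisfies $|T^{j-1}(c)|\le \beta n/2$.
\item Since $z$ has at most $\Delta$ children, some child $c$ has $|T^{j-1}(c)|\ge (\beta n/2-1)/\Delta$.
\item Rather than cutting at $c$ itself, group the children of $z$: take a minimal set of children whose subtrees, together with $z$, have total size at least $\beta n/(2\Delta)$. This set of vertices has size at most $\beta n/2+\beta n/2\le\beta n$.
\end{itemize}

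Cutting at $z$ creates a problem: $z$ would become the root $r_j$ of the piece $T_j$ and be removed from $T^{j-1}$, and this may disconnect the remainder when $z$ has other children left over. So we instead cut a single edge, as in Lemma~\ref{lem:cutedge:0}. We set $T_j:=T^{j-1}(c)$ with root $r_j=c$, where $c$ is a child of maximal subtree size. This gives
\[
(\beta n/2-1)/\Delta \le |T_j| \le \beta n/2 .
\]
Removing $T_j$ leaves a subtree $T^j$ that still contains $r$. We stop once $|T^{j-1}|\le\beta n$ and let the final remainder $T^{j-1}$ be the last piece, rooted at $r$.

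\textbf{Fixing the last piece and counting.} The last piece might be too small. If it has fewer than $\beta n/(2\Delta)$ vertices, we merge it with the most recently cut piece, which is adjacent to it via the cut edge. The merged piece is then a subtree of size at most $\beta n/2+\beta n/(2\Delta)\le \beta n$. With this merge, each piece has size at least roughly $\beta n/(2\Delta)$ and at most $\beta n$, which gives \ref{decomp3}. The pieces cover $V(T)$ and are vertex-disjoint, which gives \ref{decomp1}. Since every piece has at least $\beta n/(2\Delta)$ vertices, we get $s\le 2\Delta/\beta$, which is \ref{decomp5}. The $-1$ slack in the lower bound is absorbed by using the threshold $\beta n/2+\Delta$ in place of $\beta n/2$; the assumption $n>4k\Delta^{k+1}/\beta$ makes such additive constants negligible.

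\textbf{The main obstacle: $k$-independence.} The hard part is \ref{decomp2}: consecutive cuts can produce roots $c,c'$ at distance less than $k$. To fix this, once $z$ is chosen we do not cut at a child of $z$. Instead we look at the ball $B$ of radius $k$ in $T^{j-1}(z)$ below $z$, which contains at most $\Delta^{k+1}$ vertices. We choose the cut edge so that the new root lies at depth $k$ below $z$ or deeper. The pieces cut off then consist of subtrees hanging at depth about $k$. The vertices of $B$ themselves total at most $\Delta^{k+1}\le\beta n/(4k)$, so we can leave them inside the remainder, at a cost of only a small error in the size bounds.

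More precisely, the plan is as follows. Cut $z$'s whole subtree except that the top $k$ levels stay attached to the parent side, and split what lies below into the subtrees rooted at depth exactly $k$; each of these has size at most $\beta n/2$. Then greedily group these subtrees into pieces of size between $\beta n/(2\Delta)$ and $\beta n$. Each such group has to be one connected subtree, however. This forces the pieces to be rooted at a common ancestor, which clashes with the requirement that roots be distinct.

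I therefore expect the argument to need the following modification. Use as the root of each piece a vertex $w$ at depth $k$ below the previous cut root, and have the piece contain $w$'s path up to the previous cut level. Then argue that any two roots are separated by at least $k$ levels of remainder vertices. The bound $n>4k\Delta^{k+1}/\beta$ is exactly what ensures that these buffers, at most $\Delta^{k+1}$ vertices per cut over at most $2\Delta/\beta$ cuts, do not spoil the size bounds. Making this buffering consistent with connectivity of every piece is the step I expect to require the most care.
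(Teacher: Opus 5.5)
\textbf{There is a genuine gap: you never prove property \ref{decomp2}.} Your heaviest-child cutting, with the threshold shifted to $\beta n/2+\Delta$, does give \ref{decomp1}, \ref{decomp3} and \ref{decomp5}. (The final merge is never actually triggered, because the last remainder has more than $\beta n/2-\Delta\ge\beta n/(2\Delta)$ vertices.) However, the very next cut can be another heavy child of the same $z$, at distance $2$ from $c$. It can also be $z$ itself, as the heaviest child of its parent, at distance $1$. The buffering repair you sketch is not an argument, and as stated it is inconsistent. If the piece of $w$ contains the path from $w$ up to the previous cut level, then $w$ is no longer the top vertex of its piece. Also, the buffer vertices you leave in the remainder are later cut off inside some piece whose top can again be close to earlier roots.

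For comparison, the paper's argument does not close this gap either. It takes $T_i=T^{i-1}(r_i)$, with $r_i$ a deepest vertex satisfying $|T^{i-1}(r_i)|\ge\beta n/(2\Delta)$. It then asserts that every $x$ within distance $k$ of an earlier root has $|T^{i-1}(x)|\le 1+\Delta+\dots+\Delta^k$. That bound is false. Let $T$ be a long path ending at a vertex $z$ whose two children each carry two pendant paths of about $0.1\beta n$ vertices ($\Delta=3$). The procedure picks $r_1$ and $r_2$ to be the two children of $z$, which are at distance $2$.

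Worse, if the roots must be the top vertices of their pieces with respect to a common root, then \ref{decomp2} and \ref{decomp3} can be incompatible. This is the setting of your construction and of the later use in Lemma~\ref{lem:emb:extremal:3}. Take $\Delta=3$ and $k=10$. Let a gadget be a complete binary tree of depth $4$ with root $z$, with two pendant paths of about $0.1\beta n$ vertices at each of its $16$ leaves. Attach two gadgets to the ends of a long path.
\begin{itemize}
\item At least one gadget does not contain the global root.
\item In that gadget, every top has at least $\beta n/6$ descendants, since its piece consists of descendants. So every top lies within depth $4$ of $z$.
\item Those vertices are pairwise at distance at most $8<k$, so at most one of them is a root.
\item Then the piece containing $z$ has more than $\beta n$ vertices.
\end{itemize}
So no buffering scheme can close the gap in that form.

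What your pieces do support is the literal statement, in which $r_i$ may be any vertex of $T_i$. Choose the roots greedily, one piece at a time, each at distance at least $k$ from the earlier ones. This works because fewer than $2\Delta/\beta$ balls of radius $k-1$, each with at most $k\Delta^{k-1}$ vertices, cannot cover a piece with $\beta n/(2\Delta)$ vertices once $n>4k\Delta^{k+1}/\beta^2$. That assumption is stronger than the stated one. This version also loses the parent structure that Lemma~\ref{lem:emb:extremal:3} relies on.
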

\begin{proof}
    We iteratively construct the subtrees $T_1,\ldots, T_s$ as follows. Letting $T^0 = T$ and root $T^0$ at an arbitrary vertex $r$. At step $i\ge 1$, let $r_i$ be a vertex with maximum distance from the $r$ such that $|T^{i-1}(r_i)| \ge \beta n/(2\Delta)$. By maximality, for each child $s$ of $r_i$, we have $|T(s)| < \beta n /(2\Delta)$, and thus $|T^{i-1}(r_i)| \le \beta n/2$. Let $T^i = T^{i-1}- T^{i-1}(r_i)$ and $T_i=T^{i-1}(r_i)$. Note that if a vertex $x$ has distance at most $k$ from some $r_1,\ldots, r_{i-1}$, then $|T^{i-1}(x)| \le 1+\Delta+\ldots+\Delta^k\le (k+1)\Delta^k < \beta n/2\Delta$, which implies $r_i$ is at distance at least $k$ from each $r_1,\ldots, r_{i-1}$ and so $\{r_1,\ldots, r_i\}$ is $k$-independent. We stop the process at step $j$ when $|T^j| \le \beta n/2\Delta$. It may be that $r_j$ is not $k$-independent from $r_1,\ldots, r_{j-1}$, in which case we replace $T_{j-1}$ with $T_{j-1} \cup T^{j}$ and root it at $r_{j-1}$. Properties \ref{decomp1}--\ref{decomp5} hold by construction.
\end{proof}

\subsection{Tree embeddings}
The next two results are greedy embedding lemmas that we will use without any further reference. 
\begin{lemma}Let $G$ be a graph with $\delta(G)\ge n-1$ and let $T$ be an $n$-vertex tree. Then, for every $t\in V(T)$ and $v\in V(G)$, there is a copy of $T$ in $G$ where $t$ is copied to $v$.\end{lemma}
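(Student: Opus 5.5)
We embed $T$ greedily along an ordering of its vertices in which every vertex after the first has its parent earlier. Root $T$ at $t$ and list $V(T)$ as $t=x_1,x_2,\dots,x_n$ so that each $x_j$ with $j\ge 2$ has a unique neighbour $x_{p(j)}$ with $p(j)<j$. A breadth-first or depth-first search from $t$ gives such an ordering, and $x_{p(j)}$ is the parent of $x_j$ in the rooted tree.

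We build an injective map $\phi$ from $V(T)$ into $V(G)$ inductively. Set $\phi(x_1)=v$. Suppose $\phi(x_1),\dots,\phi(x_{j-1})$ have been chosen, are distinct, and preserve all tree edges among $x_1,\dots,x_{j-1}$. Let $u=\phi(x_{p(j)})$. Since $\delta(G)\ge n-1$, the vertex $u$ has at least $n-1$ neighbours. At most $j-2\le n-2$ of them are among $\phi(x_1),\dots,\phi(x_{j-1})$, because $u$ itself is one of these images and is not its own neighbour. Hence $u$ has an unused neighbour, and we take it as $\phi(x_j)$. The new edge $x_jx_{p(j)}$ is the only edge from $x_j$ to an earlier vertex, and it is mapped to an edge of $G$. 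After $n$ steps, $\phi$ is an embedding of $T$ with $\phi(t)=v$.

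The only step needing care is the count: the parent's image is among the already used vertices, which gives the slack of one and makes $n-1$ the right threshold rather than $n$. No step is a real obstacle.
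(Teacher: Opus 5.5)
Your proof is correct and is essentially the paper's approach: the paper states this lemma without proof as a standard greedy embedding lemma, and your vertex-by-vertex greedy embedding from $t$ is that argument. Your observation that the parent's image $u$ is one of the $j-1$ used vertices but not its own neighbour is exactly why $\delta(G)\ge n-1$ suffices.
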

\begin{lemma}Let $T$ be a tree with bipartition $V_1, V_2$ such that $|V_i|=t_i$, $i\in [2]$. Suppose $G$ is a bipartite graph with parts $U_1,U_2$ such that 
\begin{itemize}
    \item every vertex $u\in U_1$ has at least $t_2$ neighbours in $U_2$, and
     \item fevery vertex $u\in U_2$ has at least $t_1$ neighbours in $U_1$. 
\end{itemize}
Then, for any $i\in [2]$, every $t\in V_i$ and $u\in U_i$, there is a copy of $T$ in $G$ where $t$ is copied to $u$, $V_1$ is copied to $U_1$ and $V_2$ is copied to $U_2$.    
\end{lemma}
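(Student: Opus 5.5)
The statement to prove is the bipartite greedy embedding lemma. The plan is to embed $T$ vertex by vertex in a breadth-first order rooted at the prescribed vertex $t$, always respecting the sides. Concretely:

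\begin{enumerate}
\item Fix $i\in[2]$, $t\in V_i$ and $u\in U_i$.
\item Root $T$ at $t$ and list its vertices as $t=x_1,x_2,\dots,x_n$, so that every $x_j$ with $j\ge 2$ has its parent among $x_1,\dots,x_{j-1}$.
\item Map $x_1$ to $u$.
\item For $j\ge 2$, let $x_p$ be the parent of $x_j$, already mapped to $\phi(x_p)$. Map $x_j$ to a neighbour of $\phi(x_p)$ that has not yet been used.
\end{enumerate}

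Side-preservation is maintained by induction. Since $T$ is bipartite with classes $V_1,V_2$ and $G$ is bipartite with classes $U_1,U_2$, suppose $x_p\in V_a$ is mapped into $U_a$. Then $x_j\in V_{3-a}$, and every neighbour of $\phi(x_p)$ lies in $U_{3-a}$. Thus $V_1$ is sent into $U_1$ and $V_2$ into $U_2$ throughout.

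The only point to check is that an unused neighbour always exists; this is the main (though easy) step. Suppose $x_p\in V_a$, so $x_j\in V_{3-a}$. The used vertices of $U_{3-a}$ are exactly the images of the already-embedded vertices of $V_{3-a}$. Since $x_j$ itself is not yet embedded, there are at most $t_{3-a}-1$ of them. By hypothesis, $\phi(x_p)\in U_a$ has at least $t_{3-a}$ neighbours in $U_{3-a}$, so at least one neighbour is unused and we can place $x_j$ there.

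The map $\phi$ is injective by construction and sends edges of $T$ to edges of $G$, since each vertex is mapped to a neighbour of its parent's image and every edge of $T$ is a parent–child pair. Hence $\phi$ is an embedding of $T$ with $t\mapsto u$ and $V_\ell\to U_\ell$ for $\ell\in[2]$.

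The preceding non-bipartite lemma follows by the same greedy argument: when embedding $x_j$, at most $n-2$ vertices other than $\phi(x_p)$ are used, and $\phi(x_p)$ has at least $n-1$ neighbours, so an unused neighbour exists.
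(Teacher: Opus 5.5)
Your proposal is correct and is the standard greedy argument the paper has in mind: the paper gives no proof of this lemma and only calls it a greedy embedding lemma, and your count (when a child in $V_{3-a}$ is placed, at most $t_{3-a}-1$ vertices of $U_{3-a}$ are occupied) is exactly what makes it work. Your remark on the preceding non-bipartite lemma (at most $n-2$ occupied vertices besides the parent's image, against at least $n-1$ neighbours) is also correct.
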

We will need a well-known result of Koml\'os, S\'ark\"ozy and Szemer\'edi~\cite{KSS} which gives an optimal Dirac-type result for $n$-vertex trees with maximum degree $o(n/\log n)$.
\begin{theorem}\label{thm:KSS} Let $1/n\ll c\ll \delta\le 1$. If $G$ is an $n$-vertex graph with $\delta(G)\ge (1+\delta)n/2$, then $G$ contains a copy of every $n$-vertex tree $T$ with $\Delta(T)\le cn/\log n$.
\end{theorem}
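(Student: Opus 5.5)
This statement is the theorem of Koml\'os, S\'ark\"ozy and Szemer\'edi~\cite{KSS}. In the paper it is simply invoked as a black box, so the honest first step is to cite it. If I had to reconstruct an argument, I would follow the original strategy: first make the minimum degree condition robust, then embed most of the tree almost greedily into a structure found by regularity, and finish with a spanning step that exploits the slack $\delta n/2$ in the degree condition.

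The plan has three parts.
\begin{enumerate}[label=\upshape{(\roman{enumi})}]
\item \emph{Tree decomposition.} I would first split into two cases according to the shape of $T$.
If $T$ has many leaves, say at least $c' n$, I remove a suitable set $L$ of them. I embed $T-L$ into $G$ minus a small reserved random set $R$. I then attach the leaves by a bipartite matching/Hall argument between the images of their parents and the uncovered vertices; Hall's condition should follow from $\delta(G)\ge(1+\delta)n/2$ together with the bound $\Delta(T)\le cn/\log n$, which prevents a single parent from demanding too much.
If $T$ has few leaves, then by a standard counting argument $T$ contains linearly many vertex-disjoint bare paths of some fixed length. These paths become flexible connectors that can be routed through whatever vertices remain, using the high minimum degree.
\item \emph{Embedding the bulk.} I apply Szemer\'edi's regularity lemma to $G$. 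In the reduced graph $R$ we have $\delta(R)\ge(1/2+\delta/2)|R|$, so $R$ contains a connected spanning structure, for instance a Hamilton cycle with chords, or a perfect fractional matching whose support is connected. I cut $T$ into small subtrees using Lemma~\ref{lem:cutvertex:2} and Lemma~\ref{beta_decomp_v2}, and distribute the pieces among the regular pairs in proportion to their capacities. Each piece is embedded randomly inside its regular pair, so that neighbourhoods are hit evenly. The degree bound $cn/\log n$ is what makes this work: a concentration argument (Lemma~\ref{lemma:chernoff}, Lemma~\ref{lemma:mcdiarmid}) shows that a vertex of degree $d$ in $T$ can have its children placed inside the typical neighbourhood of its image, and the union bound over the $n$ vertices costs only the $\log n$ factor.
\item \emph{Completion.} The last $o(n)$ vertices are placed using the reserved set $R$ and the slack in the minimum degree. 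This is done either through the leaf matching or through connecting bare paths, depending on the case in step (i).
\end{enumerate}

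The main obstacle is this final spanning step when $T$ contains vertices of degree close to $cn/\log n$. Such a vertex forces many children into one neighbourhood. The random embedding must therefore be tuned so that every high-degree vertex still has enough unused neighbours at the very end. I would handle this by embedding the high-degree vertices first and reserving, for each of them, a private random subset of its neighbourhood of size about twice its degree. Disjointness of these reservations is what uses $\Delta(T)\le cn/\log n$: a sum over the high-degree vertices together with a Chernoff bound on the overlaps. Even with this plan in place, the details are genuinely those of~\cite{KSS}, so in the paper itself the result should simply be cited.
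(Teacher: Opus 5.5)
Your proposal matches the paper: Theorem~\ref{thm:KSS} is quoted from Koml\'os, S\'ark\"ozy and Szemer\'edi~\cite{KSS} and used only as a black box, with no proof given, so citing it is the complete answer. Your reconstruction sketch should not be read as a proof, though, for two reasons. First, Hall's condition in your leaf step does not follow from the degree bounds alone: the images of the parents, of which there may be only about $\log n/c$, must dominate the leftover vertices, and this is exactly the $\log n$ barrier described in the paper's introduction. Second, disjoint private reservations of size twice the degree cannot exist in general; for example, for a caterpillar with about $\log n/c$ spine vertices each carrying about $cn/\log n$ leaves, they would need about $2n$ vertices.
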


 \subsection{Matchings and cycles in graphs}
We need the following standard results for finding cycles and matchings under degree conditions. 
\begin{lemma}[\cite{Dirac52}]Let $n\ge 3$ and let $G$ be an $n$-vertex graph with $\delta(G)\ge n/2$. Then, $G$ is Hamiltonian.\end{lemma}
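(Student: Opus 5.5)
We prove Dirac's theorem in the standard way, by taking a longest path and closing it into a cycle. First, $G$ is connected: two nonadjacent vertices $u,v$ have $|N(u)|+|N(v)|\ge n>n-2\ge |N(u)\cup N(v)|$, so they share a neighbour. In particular $G$ has diameter at most $2$.

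Let $P=x_1x_2\cdots x_k$ be a longest path in $G$. By maximality, every neighbour of $x_1$ and every neighbour of $x_k$ lies on $P$, since otherwise $P$ could be extended. Hence $d(x_1),d(x_k)\ge n/2$ forces $k\ge n/2+1$.

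Next we find a cycle on $V(P)$ by a crossing-index argument. Let $A=\{i: x_1x_{i+1}\in E(G)\}$ and $B=\{i: x_ix_k\in E(G)\}$, both subsets of $\{1,\dots,k-1\}$. We have $|A|+|B|\ge n > k-1$, so some $i\in A\cap B$ exists. Then
\[
x_1x_2\cdots x_i x_k x_{k-1}\cdots x_{i+1}x_1
\]
is a cycle $C$ with $V(C)=V(P)$. (If $i=1$ or $i=k-1$ this degenerates to the edge $x_1x_k$ closing $P$, which is still a cycle because $k\ge 3$.)

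Finally we show $C$ is Hamiltonian. Suppose $k<n$. Since $G$ is connected, some vertex $y\notin V(C)$ is adjacent to some vertex $x\in V(C)$. Start at $y$, step to $x$, and traverse $C$ from $x$ around to the vertex just before $x$. This gives a path on $k+1$ vertices, contradicting the maximality of $P$. Therefore $k=n$ and $C$ is a Hamilton cycle.

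The main step is the pigeonhole on the index sets $A$ and $B$; the rest is routine. The hypothesis $n\ge 3$ is used only to guarantee that the closed structure $C$ is a genuine cycle.
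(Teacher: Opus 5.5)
Your argument is correct and complete. It is the classical longest-path proof of Dirac's theorem:
\begin{itemize}
\item connectivity follows from the common-neighbour count;
\item the pigeonhole on the index sets $A$ and $B$ closes the longest path into a cycle on $V(P)$, and the degenerate cases $i\in\{1,k-1\}$ are handled correctly since $k\ge n/2+1>2$;
\item maximality of $P$ then rules out $k<n$.
\end{itemize}
The paper gives no proof of this lemma; it quotes it from Dirac (1952) and uses it as a black box, so there is no competing argument to compare against.
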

\begin{lemma}[\cite{Bondy1971}]\label{lem:pancyclic}
    Let $n \ge 3$ and let $G$ be an $n$-vertex graph with $\delta(G) > n/2$. Then, $G$ is connected and non-bipartite. 
\end{lemma}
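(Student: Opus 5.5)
The statement only asks for connectivity and non-bipartiteness, both of which follow from elementary counting, so I would prove it directly rather than invoking Bondy's full pancyclicity theorem.

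For connectivity, I would show the stronger fact that any two distinct vertices $u,v$ are at distance at most $2$. If $uv\in E(G)$ there is nothing to prove. Otherwise both $N_G(u)$ and $N_G(v)$ lie in $V(G)\setminus\{u,v\}$, a set of size $n-2$. Each has size greater than $n/2$, so
\[
|N_G(u)|+|N_G(v)|>n>n-2 .
\]
Hence the two neighbourhoods intersect, giving a common neighbour $w$ and a path $uwv$. In particular $G$ is connected.

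For non-bipartiteness, I would argue by contradiction. Suppose $G$ is bipartite with classes $A$ and $B$. Since $n\ge 3$ and $\delta(G)>n/2>0$, $G$ has an edge, so both $A$ and $B$ are nonempty. Take $a\in A$; all its neighbours lie in $B$, so $|B|\ge d_G(a)>n/2$. Symmetrically, a vertex $b\in B$ gives $|A|>n/2$. Then $n=|A|+|B|>n$, a contradiction. Therefore $G$ contains an odd cycle. (By the first part it even contains a triangle or short odd cycle, but that is not needed.)

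There is no real obstacle here. The only point needing care is that both bipartition classes are nonempty, which is exactly where $\delta(G)>0$ (guaranteed by $n\ge 3$ and the degree condition) is used. The strict inequality $\delta(G)>n/2$ is essential, as shown by $K_{n/2,n/2}$ for the non-bipartite conclusion.
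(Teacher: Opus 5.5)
Your proof is correct, and it takes a different, much lighter route than the paper. The paper gives no proof and simply cites Bondy's pancyclicity theorem. With $\delta(G)>n/2$, Dirac's theorem makes $G$ Hamiltonian, hence connected. Since also $e(G)>n^2/4$, Bondy's theorem gives cycles of every length from $3$ to $n$, in particular a triangle, so $G$ is non-bipartite.

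Your two counting arguments replace all of that:
\begin{itemize}
\item non-adjacent vertices have a common neighbour, which gives connectivity;
\item in a bipartite graph, a vertex has degree at most the size of the opposite class, which rules out bipartiteness.
\end{itemize}
These are self-contained and use only the degree hypothesis. They also give diameter at most $2$. The assumption $n\ge 3$ is not actually needed, since $\delta(G)>n/2$ cannot hold for $n\le 2$. The citation buys pancyclicity, which is far more than the two conclusions stated and used, so your elementary argument fits the purpose better.

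One small correction to your parenthetical remark: the triangle does not come from your first part, which only handles non-adjacent pairs. It comes from the same count applied to an edge $uv$. Since $N_G(u)\cup N_G(v)\subseteq V(G)$ and $|N_G(u)|+|N_G(v)|>n$, the endpoints have a common neighbour $w$, which is distinct from $u$ and $v$. This gives a triangle $uvw$, and hence non-bipartiteness, in one line.
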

\begin{lemma}[\cite{Dirac52}]\label{lem:Dirac}
    Let $n >k \ge 2$. Each 2-connected n-vertex graph of minimum degree at least $k$ has a cycle of length at least $\min\{n, 2k\}$. In particular, each 2-connected n-vertex graph of minimum degree more than $k$ has a matching on at least $\min\{n, 2k\}$ vertices. 
\end{lemma}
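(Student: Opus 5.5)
The statement is Lemma~\ref{lem:Dirac}: a 2-connected $n$-vertex graph with minimum degree at least $k$ has a cycle of length at least $\min\{n,2k\}$, and a matching on at least $\min\{n,2k\}$ vertices under the stronger hypothesis $\delta>k$. The plan is to prove the cycle statement by the classical longest-path argument of Dirac, using P\'osa-type rotations, and then read off the matching from a long cycle.

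\textbf{Cycle statement.} Fix a longest path $P=v_0v_1\cdots v_\ell$ in $G$. By maximality, every neighbour of $v_0$ and every neighbour of $v_\ell$ lies on $P$. Hence $\ell\ge k$, and both $v_0$ and $v_\ell$ have at least $k$ neighbours among the vertices of $P$. Consider the index sets
\[
A=\{i: v_0v_{i+1}\in E\},\qquad B=\{i: v_\ell v_i\in E\}.
\]

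\emph{Case $A\cap B\neq\emptyset$.} Take $i\in A\cap B$. Then $v_0\cdots v_iv_\ell v_{\ell-1}\cdots v_{i+1}v_0$ is a cycle through all $\ell+1$ vertices of $P$. If $\ell+1<n$, 2-connectivity (in fact connectivity suffices) gives a vertex off this cycle adjacent to it. Opening the cycle at that point yields a path longer than $P$, a contradiction. So the cycle is Hamiltonian, of length $n$.

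\emph{Case $A\cap B=\emptyset$.} Here 2-connectivity is genuinely needed. Let $v_j$ be the farthest neighbour of $v_0$ along $P$ and $v_i$ the nearest neighbour of $v_\ell$.
\begin{itemize}
\item If $i<j$, the crossing chords combine with $P$ in Dirac's standard way. Choose a neighbour $v_a$ of $v_0$ and a neighbour $v_b$ of $v_\ell$ with $a>b$, chosen consecutively. The cycle $v_0 v_a P v_\ell v_b P^{-1} v_0$, with the portion between $b$ and $a$ skipped, has length at least $|A|+|B|\ge 2k$. This is because $A$ and $B$ are disjoint subsets of the indices on the cycle.
\item If the neighbourhoods do not interlace, so that all of $N(v_0)$ lies before all of $N(v_\ell)$, use 2-connectivity. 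Find a path $Q$ internally disjoint from $P$, joining a vertex $v_s$ with $s<j$ to a vertex $v_t$ with $t>i$ (Menger / fan lemma). Splicing $Q$ with the chords $v_0v_j$ and $v_\ell v_i$ gives a cycle that contains at least $k$ vertices on the $v_0$-side and $k$ on the $v_\ell$-side, for length at least $2k$.
\end{itemize}

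The main obstacle I expect is making the bookkeeping in this last step correct. Rather than a fresh argument, I would follow Dirac's original proof or Bondy's exposition. An alternative is to cite the result as in the paper, since it is stated there as a classical lemma with a reference and is used as a black box.

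\textbf{Matching statement.} Suppose $\delta>k$. The cycle statement, applied with $k+1$, gives a cycle $C$ of length at least $\min\{n,2k+2\}$.
\begin{itemize}
\item If $|C|$ is even, alternate edges of $C$ give a perfect matching of $V(C)$.
\item If $|C|$ is odd, alternate edges cover all but one vertex, giving $|C|-1\ge\min\{n,2k+2\}-1$ vertices.
\end{itemize}
Since the number of matched vertices is even, it is at least $\min\{n,2k\}$ in either case. When $|C|=n$ is odd, this gives $n-1$ vertices. Here I would read the lemma's $\min\{n,2k\}$ as $\min\{2\lfloor n/2\rfloor,2k\}$, which is harmless for how the lemma is applied.
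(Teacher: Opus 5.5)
The paper gives no proof of this lemma. It quotes it from Dirac and uses it as a black box, so your fallback of citing it is exactly what the paper does.

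Your crossing case is correct, provided ``consecutively'' means $a-b$ minimal. The precise count is $A\cup B\subseteq\{0,\dots,b\}\cup\{a-1,\dots,\ell-1\}$, a set of exactly $|C|$ indices. Note that $A$ itself is not contained in the cycle's index set, since $a-1\in A$ but $v_{a-1}\notin C$.

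Your derivation of the matching clause is also fine. Your parity caveat is right too: for odd $n\le 2k$ the clause is literally false (e.g.\ $K_5$ with $k=3$). This is harmless, because the paper only applies it with $2k<n$.

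The genuine gap is the non-interlacing case, which is the real content of Dirac's theorem. It has two parts.

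\textbf{No single bridge $Q$ when $j<i$.} Here 2-connectivity does not give one path $Q$, internally disjoint from $P$, from $\{v_0,\dots,v_{j-1}\}$ to $\{v_{i+1},\dots,v_\ell\}$. It only excludes cut vertices, whereas the segment $v_j\cdots v_i$ has at least two vertices. Concretely, take cliques on $\{v_0,\dots,v_3\}$ and on $\{v_5,\dots,v_8\}$, the edges $v_3v_4$ and $v_4v_5$, and the extra edges $v_2v_4$ and $v_3v_6$. Then:
\begin{itemize}
\item the graph is 2-connected with minimum degree $3$;
\item $P=v_0\cdots v_8$ is Hamiltonian and hence longest, with $j=3<i=5$;
\item there is no edge between $\{v_0,v_1,v_2\}$ and $\{v_6,v_7,v_8\}$, so your $Q$ does not exist.
\end{itemize}
What 2-connectivity does provide is a chain of pairwise crossing $P$-paths (here $v_2v_4$ and $v_3v_6$). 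The cycle has to be routed through all of them alternately, and that bookkeeping is exactly what is missing.

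\textbf{The splice loses the neighbours of $v_0$.} Even when $Q$ exists, splicing it with $v_0v_j$ and $v_\ell v_i$ yields the cycle on $\{v_0,\dots,v_s\}\cup\{v_j,\dots,v_i\}\cup\{v_t,\dots,v_\ell\}\cup V(Q)$. This drops $v_{s+1},\dots,v_{j-1}$, so it may lose almost all of $N(v_0)$ (e.g.\ $s=1$). Your claimed ``$k$ vertices on each side'' therefore does not follow.

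The right splice uses two different chords:
\begin{itemize}
\item $v_0v_c$, where $c$ is the least index greater than $s$ of a neighbour of $v_0$;
\item $v_dv_\ell$, where $d$ is the largest index less than $t$ of a neighbour of $v_\ell$.
\end{itemize}
The cycle $v_s\cdots v_0v_c\cdots v_dv_\ell\cdots v_tQv_s$ then contains $v_0$, $v_\ell$ and all of $N(v_0)\cup N(v_\ell)$, hence at least $2k+1$ vertices.

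With this repair the case $i=j$ is complete, since there $G-v_j$ is connected and so $Q$ exists. The case $j<i$ still requires the crossing-chain argument from Dirac's or Bondy's proof.
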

%\begin{lemma}[Generalised Hall's theorem]Let $G$ be a bipartite graph with  parts $A$ and $B$, and let $(f_a)_{a\in A}$ be a tuple of non-negative integers indexed by elements of $A$. Suppose that $|N(S)|\ge \sum_{s\in S}f_s$ for all $S\subset A$. Then, there is a collection of vertex-disjoint stars $(S_a)_{a\in A}$ in $G$ where $S_a$ is centred at $a\in A$ and has exactly $f_a$ leaves.\end{lemma}
\subsection{Szemer\'edi's regularity lemma}
Let $G$ be a graph and let $A,B\subset V(G)$ be disjoint subsets. The density of the pair $(A,B)$ is defined as 
\[d(A,B)=\dfrac{e(A,B)}{|A||B|}.\]
We say that the pair $(A, B)$ is \textit{$\eps$-regular} if for all $A' \subset A$ and $B' \subset B$ such that $|A'| \ge \epsilon|A|$ and $|B'| \ge \epsilon|B|$, we have $|d(A, B) - d(A', B')| < \eps$, and say that $(A,B)$ is $(\eps,d)$-regular if, moreover, $d(A,B)\ge d$. Also, we say the graph $G[A,B]$ is $\varepsilon$-regular if $(A,B)$ is an $\varepsilon$-regular pair in $G$, and similarly for $(\eps,d)$-regular. Say a subset $X\subseteq A$ is \textit{$\eps$-significant} if $|X|> \eps |A|$, and say that a vertex $x\in A$ is  \textit{$\varepsilon$-typical} with respect to a significant set $Y\subseteq B$ if $d(x,Y)\ge(d(A,B)-\varepsilon)|Y|$. We simply write \textit{regular}, \textit{significant} or \textit{typical} whenever $\eps$ is clear from the context. The following result is a well-known fact about regular pairs. 
\begin{lemma}  \label{fact:split_regular_pairs}  Let $(A,B)$ be an $\eps$-regular pair with density $d$. 
  \begin{enumerate}[label=\upshape{(\roman{enumi})}]
    \item For each $\eps$-significant set $B' \subseteq B$, all but at most $\eps|A|$ vertices from $A$ are $\eps$-typical to~$B'$.
    \item For all $\alpha>\varepsilon$ and every $\alpha$-significant sets $A' \subseteq A$ and $B'\subseteq B$,
      $(A',B')$ is~$\frac{2\eps}{\alpha}$-regular with density $d \pm \eps$.
  \end{enumerate}
\end{lemma}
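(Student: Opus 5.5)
The plan is to prove both parts directly from the definition of $\eps$-regularity.

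For part (i), argue by contradiction. Let $A_0\subseteq A$ be the set of vertices $x\in A$ that are not $\eps$-typical to $B'$, that is, with $d(x,B')<(d-\eps)|B'|$. Suppose $|A_0|>\eps|A|$. Then $A_0$ and $B'$ are both $\eps$-significant, so the pair $(A_0,B')$ is among the pairs tested in the definition of $\eps$-regularity. Counting edges from the $A_0$ side gives
\[
e(A_0,B')=\sum_{x\in A_0} d(x,B')<(d-\eps)|A_0||B'|,
\]
hence $d(A_0,B')<d-\eps$, which contradicts $|d(A_0,B')-d|<\eps$. Therefore $|A_0|\le\eps|A|$. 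The only care needed is the boundary case: significance was defined with a strict inequality $|X|>\eps|A|$, while regularity is tested on sets with $|X|\ge\eps|A|$, and the former implies the latter, so the argument goes through.

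For part (ii), let $A'\subseteq A$ and $B'\subseteq B$ be $\alpha$-significant.
\begin{enumerate}[label=\upshape{(\alph{enumi})}]
\item \emph{Density.} Since $\alpha>\eps$, we have $|A'|>\alpha|A|\ge\eps|A|$ and likewise $|B'|\ge\eps|B|$. Regularity of $(A,B)$ then gives $d(A',B')=d\pm\eps$.
\item \emph{Regularity.} Take $A''\subseteq A'$ and $B''\subseteq B'$ with $|A''|\ge\frac{2\eps}{\alpha}|A'|$ and $|B''|\ge\frac{2\eps}{\alpha}|B'|$. Then
\[
|A''|\ge\frac{2\eps}{\alpha}\cdot\alpha|A|=2\eps|A|\ge\eps|A|,
\]
and similarly $|B''|\ge\eps|B|$. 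So regularity of $(A,B)$ applies to $(A'',B'')$ and gives $|d(A'',B'')-d|<\eps$.
\item \emph{Combining.} By the triangle inequality,
\[
|d(A'',B'')-d(A',B')|\le|d(A'',B'')-d|+|d-d(A',B')|<2\eps\le\frac{2\eps}{\alpha},
\]
using $\alpha\le1$. This is exactly $\frac{2\eps}{\alpha}$-regularity of $(A',B')$.
\end{enumerate}

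Both parts are routine consequences of the definition, so no step is a real obstacle. The only point needing attention is keeping the strict and non-strict inequalities straight, in particular in (i), where non-typical sets are compared with the threshold $\eps|A|$.
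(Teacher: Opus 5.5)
Your proof is correct. The paper gives no proof of this lemma and cites it as a well-known fact; your argument is the standard one, working directly from the definitions.

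The one step you leave unstated is the use of $\alpha\le 1$ in part (ii). It holds automatically: an $\alpha$-significant $A'\subseteq A$ satisfies $|A|\ge|A'|>\alpha|A|$, which forces $\alpha<1$.
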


We will use the following coloured version of Szemer\'edi's regularity lemma.

\begin{theorem}\label{thm:ColRegLem}Let $1/K_0\ll 1/k_0\ll \varepsilon,d\le 1$ and let $n\ge k_0$. Suppose $G$ is a blue/red coloured $n$-vertex graph. Then, for some $k_0 \le k \le K_0$, there is a partition $V(G)=V_0\cup V_1\cup\ldots\cup V_k$ and a subgraph $G' \subseteq G$ such that $V(G')=V(G)\setminus V_0$ and the following properties hold.  
\begin{enumerate}[label=\upshape\textbf{R\arabic{enumi}}]
    \item\label{item:(R1)} $|V_0| \le \epsilon n$. 
    \item\label{item:(R2)} $|V_1|=\ldots =|V_k|\le \ceil{\epsilon n}$.
    \item\label{item:(R3)}$d_{G'}(v) > d_G(v) -(2d + \epsilon)n$ for all $v \in V(G)$.
    \item\label{item:(R4)} $V_i$ is an independent set in $G'$ for all $i \in [k]$.
    \item\label{item:(R5)} For all $1 \le i < j \le k$, both $\red{G'}[V_i, V_j]$ and $\blue{G'}[V_i,V_j]$ are $\epsilon$-regular with density either 0 or at least $d$.
\end{enumerate}
\end{theorem}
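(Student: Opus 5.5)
We would not prove this from scratch. Instead we would derive it from the standard \emph{degree form} of Szemer\'edi's regularity lemma, in its multicolour version (see Koml\'os--Simonovits, Theorem~1.10, and the multigraph versions of Kohayakawa--R\"odl). That result states the following. Given $\varepsilon'>0$, $d'\in[0,1]$, $k_0'$ and graphs $G_1,\dots,G_\ell$ on a common $n$-vertex set, there are $K_0'$, a partition $V_0\cup V_1\cup\dots\cup V_k$ with $k_0'\le k\le K_0'$, and subgraphs $G_j'\subseteq G_j$ with the following properties:
\begin{itemize}
\item $|V_0|\le\varepsilon' n$ and $|V_1|=\dots=|V_k|$;
\item $d_{G_j'}(v)>d_{G_j}(v)-(d'+\varepsilon')n$ for every $v$;
\item each $V_i$ is independent in every $G_j'$;
\item every pair $(V_i,V_{i'})$ is $\varepsilon'$-regular in each $G_j'$, with density $0$ or at least $d'$.
\end{itemize}
We apply it with $\ell=2$, $G_1=\red{G}$, $G_2=\blue{G}$, $d'=d$, $\varepsilon'=\varepsilon/2$, and $k_0'=\max\{k_0,2/\varepsilon\}$. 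We then set $G'=\red{G'}\cup\blue{G'}$ restricted to $V(G)\setminus V_0$, and take $K_0$ to be the resulting $K_0'$, which is allowed by the hierarchy $1/K_0\ll 1/k_0$.

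Next we check \ref{item:(R1)}--\ref{item:(R5)}.
\begin{itemize}
\item \ref{item:(R1)} and \ref{item:(R4)} are immediate.
\item \ref{item:(R2)}: this follows from $|V_i|\le n/k\le n/k_0'\le\lceil\varepsilon n\rceil$.
\item \ref{item:(R5)}: this holds because the red and blue edges of $G'$ between $V_i$ and $V_{i'}$ are exactly $\red{G'}[V_i,V_{i'}]$ and $\blue{G'}[V_i,V_{i'}]$.
\item \ref{item:(R3)}: the degree loss is additive over colours, so $d_{G'}(v)>d_G(v)-2(d+\varepsilon/2)n=d_G(v)-(2d+\varepsilon)n$.
\end{itemize}
For vertices of $V_0$, which do not lie in $G'$, \ref{item:(R3)} should be read as a statement about $v\in V(G')$. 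This is the usual convention, and it is the only case used later.

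If one wants a self-contained argument rather than citing the multicolour degree form, the route is as follows. First apply the ordinary regularity lemma simultaneously to both colour classes, obtaining a partition in which all but $\varepsilon''k^2$ pairs are $\varepsilon''$-regular in both colours, where $\varepsilon''\ll\varepsilon$. Then clean up in three steps:
\begin{enumerate}
\item Move to $V_0$ every cluster lying in more than $\sqrt{\varepsilon''}k$ irregular pairs. At most $2\sqrt{\varepsilon''}k$ clusters move, and afterwards each surviving vertex loses at most $\sqrt{\varepsilon''}n$ edges through irregular pairs.
\item Delete all edges inside clusters. This costs at most $n/k$ per vertex.
\item For each colour, delete the edges of pairs whose density is below $d$.
\end{enumerate}

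The main obstacle is the third step. A pair with density below $d$ controls degrees only on average, so an individual vertex may lose far more than $d|V_{i'}|$ edges there. The fix is to use regularity. In an $\varepsilon''$-regular pair of density below $d$, fewer than $\varepsilon''|V_i|$ vertices of $V_i$ have more than $(d+\varepsilon'')|V_{i'}|$ neighbours in $V_{i'}$.

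Count over all pairs. A vertex $v$ is \emph{bad} if it is exceptional in more than $\sqrt{\varepsilon''}k$ sparse pairs, and we move the bad vertices of each cluster to $V_0$. At most $2\sqrt{\varepsilon''}|V_i|$ vertices of each cluster are bad, so at most $2\sqrt{\varepsilon''}n$ vertices move in total. We then rebalance the clusters to equal size by moving further vertices to $V_0$, and Lemma~\ref{fact:split_regular_pairs}(ii) preserves regularity with slightly worse parameters. Every remaining vertex then loses at most $(d+\varepsilon'')n+\sqrt{\varepsilon''}n$ edges per colour in sparse pairs. Summing the three losses over both colours gives the bound $(2d+\varepsilon)n$, and the size of $V_0$ remains at most $\varepsilon n$ once $\varepsilon''$ is small enough.
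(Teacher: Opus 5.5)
Your main route is the same as the paper's: the paper gives no proof and treats this as the standard two-colour degree form of the regularity lemma. Deriving it from the multicolour degree form is therefore the intended argument. Your remark that \textbf{R3} can only hold for $v\in V(G')=V(G)\setminus V_0$ is also correct, since the statement as written fails for vertices of $V_0$.

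There is one bookkeeping slip in your check of \textbf{R3}. The version you cite gives the degree bound for every $v$, so the $G_j'$ are spanning and keep the edges at $V_0$. Restricting $\red{G'}\cup\blue{G'}$ to $V(G)\setminus V_0$ then costs each vertex up to $|V_0|\le\varepsilon' n$ further edges. With $\varepsilon'=\varepsilon/2$ you therefore only get $d_{G'}(v)>d_G(v)-(2d+3\varepsilon/2)n$. Taking $\varepsilon'=\varepsilon/3$ gives exactly $(2d+\varepsilon)n$.

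Your optional self-contained sketch needs two small repairs of the same kind.
\begin{itemize}
\item Rebalancing after deleting the pairs of density below $d$ can push a retained pair's density down to about $d-\varepsilon''$, which breaks the ``$0$ or at least $d$'' condition. Delete the pairs of original density below $d+\varepsilon''$ instead. Your exceptional-vertex count then goes through unchanged with $d+2\varepsilon''$ in place of $d+\varepsilon''$.
\item Your final sum of losses omits the edges lost to the enlarged $V_0$. This loss is only $O(\sqrt{\varepsilon''})n$ per vertex, but it must be included.
\end{itemize}
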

For a graph $G$, the partition $V(G)=V_0\cup V_1\cup\ldots\cup V_k$ given by Theorem~\ref{thm:ColRegLem} is referred to as an $(\eps,d)$-regular partition of $G$ and $G'$ is the cleaned graph. As usual, one can define the $(\eps,d)$-reduced graph $R$ which has vertex set $[k]$ and, for distinct $i,j\in [k]$, $ij$ is an edge with colour $*\in\{\text{blue, red}\}$ in $R$ if and only if $(V_i,V_j)$ is $(\eps,d)$-regular with density at least $d$ in $G'$.  We remark that, though not explicitly mentioned here, we can require that the partition given in Theorem~\ref{thm:ColRegLem} is a refinement of any given partition of the host graph. In particular, if the host graph is bipartite, we can ensure that each part $V_1,\ldots, V_k$ is completely contained in one of the partition classes. Moreover, note that if we pretend that both colours are the same, then Theorem~\ref{thm:ColRegLem} is just the classical degree version of the regularity lemma.  

The following standard result states that minimum degree conditions in $G$ translate to minimum degree conditions in the reduced graph.

\begin{lemma}\label{fact:H_min_deg}
    If $G$ is a graph with $\delta(G) \ge \alpha |G|$ and $R$ is an $(\eps, d)$ reduced graph of $G$, then $\delta(R) \ge (\alpha - (2d + 2\eps))|R|$. 
\end{lemma}
%\begin{proof}
%    Suppose that $\delta(H) < (\alpha - (2d + 2\eps))|H|$. Then there is some cluster $V_i$ such that $d_H(V_i) < (\alpha - (2d + 2\eps))|H|$. For each $v \in V_i$, we then have $d_{G'}(v) < (\alpha - (2d + 2\eps))|G'| + |V_0| \le (\alpha - (2d + \eps))|G|$, a contradiction, as by \eqref{item:(R3)}, $\delta(G') \ge (\alpha - (2d +\eps))|G|$. 
%\end{proof}
\section{Embedding  bounded degree trees  using regularity}\label{sec:regularity}
In this section, we use the regularity method to embed trees, inspired by the \textit{connected-matching method} introduced by \L uczak \cite{luczak1999r} in the 90s. Although this method was originally used to embed cycles, it has been adapted by several groups of authors to embed bounded-degree trees in dense graphs/hypergraphs~(see, e.g., \cite{Besomi2019,bottcher2012tripartite,chen2025embedding,pokrovskiy2025embedding}). Some of the results presented in this section are not new, but we have decided to present them here in a unified manner for the sake of completeness. 

The next result says that we can embed a small tree along a path in some reduced graph; this will be key while using the connected-matching method.
\begin{lemma}\label{lemma:embedding:walk}Let $1/m\ll \alpha\ll \beta\ll \eps\ll d\le 1$ and let $T$ be a tree with $\beta m$ vertices and $\Delta(T)\le m ^{\alpha}$. Let $G$ be a graph that contains sets $V_0,V_1,\ldots, V_t$  such that $|V_i|=m$ for all $i\in [t]$, and $(V_{i-1},V_{i})$ is $(\eps,d)$-regular for each $i\in [t]$. Suppose $T$ is rooted at some vertex $v_0$, and $u \in V_0$ is typical. Then there is an embedding $\psi:T\to G$ such that the following holds.
\begin{enumerate}[label=\upshape{(\roman{enumi})}]
    \item  We have $\psi (v_0) = u$, and for each  $0\le i<t$ and $v\in V(T)$ at distance $i$ from $v_0$, $\psi(v)\in V_i$ and $\psi(v)$ is $\eps$-typical to $V_{i + 1}$.
    \item For every vertex $v\in V(T)$ at distance at least $t$ from $v_0$, $\psi(v)\in V_{t-1}\cup V_t$ and if $\psi(v) \in V_{t-i}$, for $i \in \{0,1\}$, then $\psi(v)$ is $\eps$-typical to $V_{t-(1-i)}$.
\end{enumerate}
\end{lemma}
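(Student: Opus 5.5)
We embed $T$ greedily, vertex by vertex, in breadth-first order from $v_0$. Each tree vertex at distance $i$ from $v_0$ is assigned to a target cluster: for $i<t$ the target is $V_i$. For $i\ge t$ the target alternates between $V_t$ and $V_{t-1}$, so a vertex at distance $t+j$ goes to $V_t$ if $j$ is even and to $V_{t-1}$ if $j$ is odd. Under this rule, consecutive vertices of $T$ always land in clusters forming one of the regular pairs $(V_{i-1},V_i)$. We also need to say what "typical" means for each target. A vertex placed in $V_i$ with $i<t$ must be typical to $V_{i+1}$. A vertex placed in $V_t$ at depth $\ge t$ must be typical to $V_{t-1}$. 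A vertex placed in $V_{t-1}$ at depth $>t$ must be typical to $V_t$. The case $t=0$ is trivial, since then $T$ has one vertex; here $u$ being typical is read as $u$ being typical to $V_1$. These requirements are exactly properties (i) and (ii).

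The invariant is that every embedded vertex $x=\psi(v)$ is $\eps$-typical to the cluster $W$ where the children of $v$ will go. So $x$ has at least $(d-\eps)m$ neighbours in $W$. Since $|T|=\beta m$, at most $\beta m$ vertices of $W$ are already used, which leaves at least $(d-\eps-\beta)m\ge dm/2$ free neighbours of $x$ in $W$.

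When we embed a child $c$ of $v$ into $W$, $\psi(c)$ must in turn be typical to the cluster $W'$ that receives the children of $c$. Here $W'$ is the other member of a regular pair containing $W$. By Lemma~\ref{fact:split_regular_pairs}(i) applied with $B'=W'$, all but at most $\eps m$ vertices of $W$ are $\eps$-typical to $W'$. Note that $W'$ is trivially significant, since $W'$ is the whole cluster, and we use typicality with respect to the whole cluster throughout, so no shrinking of significant sets is needed.

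It follows that the set of available images for $c$ has size at least $(d-\eps-\beta)m-\eps m\ge dm/3$. This uses $\beta\ll\eps\ll d$. We need to embed at most $\Delta(T)\le m^\alpha$ children of $v$, and $m^\alpha\ll dm/3$ because $\alpha<1$ and $m$ is large. So all children of $v$ can be placed injectively onto distinct free, typical neighbours of $x$.

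For the root, $u\in V_0$ is typical to $V_1$ by hypothesis, which starts the induction. Processing the vertices in BFS order gives an injective homomorphism $\psi$ satisfying (i) and (ii).

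The main obstacle is only bookkeeping. We must make sure that the typicality target of each vertex is the correct neighbouring cluster, particularly in the two-cluster oscillation phase where the same cluster $V_{t-1}$ is used both as $V_{t-1}$ (typical to $V_t$) and possibly earlier in the walk. Here, though, vertices at depth $t-1$ are also required to be typical to $V_t$, so the requirements are consistent. Beyond this, the quantitative condition that the used vertices plus the atypical vertices stay below the neighbourhood size, namely $\beta m+\eps m+m^\alpha<(d-\eps)m$, holds comfortably under the stated hierarchy.
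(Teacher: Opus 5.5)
Your proposal is correct and is essentially the paper's own proof. Both embed $T$ greedily level by level along $V_0,\dots,V_{t-1}$ and then alternate between $V_t$ and $V_{t-1}$, choosing each new image among the at least $(d-2\varepsilon-\beta)m$ unused neighbours of the parent's image that are typical to the next cluster. The only slip is your aside on $t=0$: $T$ still has $\beta m$ vertices there, so that case is not trivial but simply excluded, since the statement implicitly requires $t\ge 1$.
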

\begin{proof}
    We construct the embedding $\psi : T \to G$ by induction on the level of the vertices. We define a sequence $j_1,\ldots, j_h$, where $h$ is the height of the tree. For $1 \le i < t$, set $j_i = i$ and, for $t \le i  \le h$, set $j_i = t - (i-t \mod 2)$. At step $i$, we embed the vertices of level $i$ to vertices of $V_{j_i}$ which are typical to $V_{j_{i+1}}$. For the root we assign $\psi(v_0) = u$, so suppose we have an embedding $\psi_{i-1}$ of the vertices up to level $i-1$ for some $i \in [h]$. Now for each vertex $v$ of level $i+1$, the parent of $v$ has at least $(d-\epsilon)m$ neighbours in $V_{j_i}$, of which at most $\epsilon m$ are not typical to $V_{i+1}$ and at most $\beta m$ have been covered, leaving at least $(d-2\eps - \beta)m > (d-3\eps)m$ vertices to embed each $v$. As the total number of vertices in level $i$ is at most $m^{i\alpha}\ll (d-3\eps)m$, there is enough room to embed each vertex of level $i$ to a unique vertex. 
\end{proof}
The next proposition is the core result of this section and (informally) says that if we have some sort of connected structure in a reduced graph $R$ where we want to embed a given tree $T$, then it is enough to find an assignment of $T$ to $R$ so that no cluster is overused.   
\begin{proposition}\label{prop:main embedding}Let $1/n\ll 1/m\ll \alpha\ll \beta\ll 1/k\ll \varepsilon\ll d\ll \delta \le 1$. Let $T$ be an $n$-vertex tree with  $\Delta(T)\le n^{\alpha}$ and bipartition $S_1\cup S_2$, and suppose $T$ has a $\beta$-decomposition $T_1,\ldots, T_t$ as given by Lemma~\ref{beta_decomp_v2}. Let $G$ be a graph with at most $4n$ vertices that has a partition $V(G)=V_0\cup V_1\cup\ldots \cup V_k$, and let $R$ be a graph with vertex set $[k]$ such that if $ij\in E(R)$, then $(V_i,V_j)$ is $(\eps,d)$-regular. Suppose that $|V_i|=m$ for all $i\in [k]$, $R$ is connected, and there is a function $\sigma:V(T)\to V(R)$ such that the following properties hold.
\stepcounter{propcounter}
\begin{enumerate}[label=\upshape{\textbf{\Alph{propcounter}\arabic{enumi}}}]
    \item\label{prop:emb:1} $|\sigma ^{-1}(i)|\le (1-3\delta)m$ for each $i\in [k]$.
    \item\label{prop:emb:2} For every $i\in [t]$ there is an edge $\sigma_{i,1}\sigma_{i,2}\in E(R)$ such that $\sigma(V(T_i))=\{\sigma_{i,1},\sigma_{i,2}\}$ and $\sigma(V(T_i)\cap S_x)=\sigma_{i,x}$ for $x\in [2]$.
    \item\label{prop:emb:3} If $R$ is bipartite, then $\sigma$ preserves the bipartition classes of $T$.
\end{enumerate}
Then, $G$ contains a copy of $T$. Furthermore, given any $v_0 \in T$ and any typical $u \in V_q$ for some $q$ (where  $q$ is in the same bipartition class as $\sigma(v_0)$ if $R$ is bipartite), $G$ contains a copy of $T$ with $v_0$ embedded to $u$.
\end{proposition}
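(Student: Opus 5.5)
We embed $T$ piece by piece, following the order in which the $\beta$-decomposition $T_1,\ldots,T_t$ is glued together. First reorder the pieces so that $v_0\in T_1$ and, for each $j\ge 2$, the tree $T_j$ meets $T_1\cup\cdots\cup T_{j-1}$ in exactly one vertex $z_j$. This is possible because the pieces are edge-disjoint subtrees covering $E(T)$, so they form a tree-like structure. We maintain a partial embedding $\psi$ of $T_1\cup\cdots\cup T_{j-1}$ with two invariants. First, every $v$ is mapped into $V_{\sigma(v)}$, except for $O(1)$-many vertices per piece that lie on connecting walks. Second, every embedded vertex $v$ that may later serve as a connecting vertex is typical with respect to the cluster $\sigma(w)$ for each neighbour $w$ of $v$ in $T$. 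Since $t\le 100\beta^{-2}$ and each $T_i$ has at most $\beta n$ vertices, the total overhead from the connecting walks is $O(\beta^{-2}k)\ll \delta m$. Together with the bound $|\sigma^{-1}(i)|\le(1-3\delta)m$ from \ref{prop:emb:1}, this guarantees that every cluster always has at least $2\delta m$ free vertices.

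To embed $T_j$, let $z_j$ be already embedded at $\psi(z_j)\in V_p$. By \ref{prop:emb:2}, $T_j$ should go onto the edge $\sigma_{j,1}\sigma_{j,2}$, and $z_j$ is assigned to $\sigma_{j,x}$ for the class $S_x$ containing $z_j$. If the assignment does not match where $z_j$ actually sits, we use that $R$ is connected and route a walk in $R$ from $p$ to the appropriate endpoint of the edge $\sigma_{j,1}\sigma_{j,2}$ with the right parity. When $R$ is non-bipartite, parity can be adjusted using an odd cycle. When $R$ is bipartite, \ref{prop:emb:3} ensures $p$ and $\sigma(z_j)$ lie in the same class, so parity automatically matches. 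Walks of length at most $2k+1$ suffice.

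We then apply Lemma~\ref{lemma:embedding:walk} to $T_j$ rooted at $z_j$ with the sequence of clusters along this walk, finishing by oscillating on the pair $(V_{\sigma_{j,1}},V_{\sigma_{j,2}})$. This is applied within the free parts of the clusters: by Lemma~\ref{fact:split_regular_pairs}(ii), the restrictions to sets of size $\ge \delta m$ are still $(2\eps/\delta, d/2)$-regular. Vertices of $T_j$ within distance about $2k$ of $z_j$ are embedded along the walk, and the rest go to $V_{\sigma_{j,1}}\cup V_{\sigma_{j,2}}$ in the correct bipartition class. The lemma preserves typicality, so the second invariant survives. The first piece starts at the prescribed typical vertex $u\in V_q$, with a walk from $q$ to $\sigma(v_0)$; this walk exists with correct parity by connectivity and by the hypothesis on $q$ in the bipartite case.

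The main obstacle is the bookkeeping in Lemma~\ref{lemma:embedding:walk}. That lemma needs the pieces to have $\beta m$ vertices with $\beta m\ll \eps m$, while $T_j$ may have up to $\beta n$ vertices, which could be much larger than $\beta m$ since $n\le k m$ only up to constants. Also, the lemma's room argument used "at most $\beta m$ covered", whereas here much of each cluster is already used. So instead of applying the lemma verbatim, I would rerun its greedy level-by-level argument inside the free parts of the clusters. Each new vertex needs a free typical neighbour of its parent's image. The available count is $(d-\eps')\cdot 2\delta m-\eps m\gg n^{\alpha}$, using $\delta\gg d\gg\eps$ suitably, or else re-regularising with constants adjusted; it stays positive as long as every cluster keeps $\ge 2\delta m$ free vertices. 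This uses only \ref{prop:emb:1}, not the size of $T_j$, so the greedy embedding succeeds for all pieces.

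The point requiring care is that typicality into a cluster is measured against its current free set, which shrinks over time. To handle it, I would embed each vertex typical not to whole clusters but to the free part present when its children are placed. Children are embedded immediately after their parent within the same piece, so this ordering is available. Connecting vertices $z_j$ whose later pieces come afterwards need a fresh argument: they can instead be chosen typical to a reserved random subset of size $\delta m$ of each cluster, kept aside throughout.
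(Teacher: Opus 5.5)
Your plan is essentially the paper's proof. There, each cluster is split into a core $V_{i,c}$ of size $(1-2\delta)m$ and a reserved part $V_{i,w}$ of size $2\delta m$; the first $k$ levels of every piece are routed along a walk in $R$ through the reserved parts, so each attachment vertex only needs to stay typical to a single, barely used $V_{j,w}$ (exactly the role of your reserved subsets); deeper vertices go into the cores as prescribed by $\sigma$; and Lemma~\ref{lemma:embedding:walk} is applied to the current free parts.

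Two small corrections. First, the walk region of a piece is a ball of radius $O(k)$, so it has up to $\Delta(T)^{O(k)}\le n^{O(\alpha k)}$ vertices rather than $O(1)$, and the total overhead $O(\beta^{-2}n^{O(\alpha k)})$ is $o(m)$ precisely because $\alpha\ll 1/k$. Second, your worry about $\beta n$ versus $\beta m$ is unfounded: \ref{prop:emb:1} gives $n\le km$, so each piece has at most $\beta k m$ vertices, with $\beta k\ll\varepsilon$.
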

\begin{proof}  First, for each $i\in [k]$, we arbitrarily partition $V_i=V_{i,c}\cup V_{i,w}$ where $|V_{i,c}|=(1-2\delta)m$ and $|V_{i,w}|=2\delta m$. We order the pieces $T_1,\ldots, T_t$ in the following way. We let $v_0$ be the root $r_1$ of $T_1$. For all $i \in [t] \setminus {1}$, we let $r_i$ be the vertex closest to $v_0$ in $T$, and order the pieces so that for each $1<i\le t$ there is some $1\le j<i$ such that the parent of $r_i$ is a leaf of $T_j$.

 We use $\sigma:V(T)\to V(R)$ as a guide to produce an embedding of $\psi:T\to G$ as follows. For each $s\in [t]$ in turn, we will have a partial embedding $\psi_s:T_1\cup\ldots \cup T_s\to G$ so that the following properties hold. \stepcounter{propcounter}
\begin{enumerate}[label=\upshape{\textbf{\Alph{propcounter}\arabic{enumi}}}]
    \item\label{embedding:1.1} If $v\in V(T_i) \cap S_x$ is at distance at least $k$ from $r_i$, then $\psi_s(v)\in V_{\sigma_{i,x},c}$ and $\psi_s(v)$ is $\eps/\delta$-typical to $V_{\sigma_{i,3-x}}$.
    \item\label{embedding:1.2} If $v\in V(T_i)$ is at distance less than $k$ from $r_i$, then $\psi_s(v)\in \cup_{i\in [k]}V_{i,w}$ is $\eps/\delta$-typical to $V_{j,w}$ for some $j\in [k]$. 
\end{enumerate}

 For $s=1$, we embed $v_0$ to $u$. Let $\sigma_{i, x}$ be tha value of $\sigma(v)$ for each $v$ at distance $k$ from $v_0$. As $R$ is connected, there is a path ${j_0},\ldots, {j_\ell}$ such that $j_0=q$ and $j_\ell=\sigma_{i, x}$ for some $\ell \in [k-1]$. We note that this path always exists and, if $R$ is bipartite, then it preserves the bipartition classes of $T$ because of~\ref{prop:emb:2} and~\ref{prop:emb:3}. We note that by Lemma~\ref{fact:split_regular_pairs} each two consecutive sets in the sequence $V_{j_0, w}, \ldots, V_{j_\ell-1,w}, V_{j_\ell,c}$ are $\epsilon/d$ regular with density $d-\epsilon$, and thus we embed along this sequence using \Cref{lemma:embedding:walk}. 
 
 Now assume that, for some $1\le s<t$, we have a partial embedding $\psi:T_1\cup\ldots\cup T_s\to G$ so that both~\ref{embedding:1.1} and~\ref{embedding:1.2} hold, and let us try to extend $\psi$.  By definition of the ordering of $T_1,\ldots, T_t$, there is some $j\in [s]$ and a vertex $v_j\in V(T_j)$ such that $v_jr_{s+1}\in E(T)$. Note that either~\ref{embedding:1.1} or~\ref{embedding:1.2} imply that there is some $s_0\in [k]$ such that $\psi(v_j)$ is typical to $V_{s_0,w}$. For all $v \in T_{s+1}$ at distance $k$ from $r_{s+1}$, let $\psi(v) = \sigma_{s+1, x}$. As $R$ is connected, there is a path ${j_0},\ldots, {j_\ell}$ such that $j_0=s_0$ and $j_\ell=\sigma_{s+1, x}$ for some $\ell \in [k-1]$. We note that this path always exists and, if $R$ is bipartite, then it preserves the bipartition classes of $T$ because of~\ref{prop:emb:2} and~\ref{prop:emb:3}. 

Using $\Delta(T)\le n^\alpha$  and \ref{embedding:1.1}, we deduce that the number of vertices mapped to $\cup_{i\in [k]}V_{i,w}$ is at most
\[t\cdot (1+\Delta(T)+\ldots +\Delta(T)^k)\le 2t\Delta(T)^{k+1}\le 2tn^{\alpha(k+1)}=o(n),\]
as $\alpha\ll 1/k$ and $t< 1/\beta$. Moreover, note that~\ref{prop:emb:1} implies the number of unoccupied vertices in each $V_{i,c}$ is at least $(1-2\delta)m-(1-3\delta)m= \delta m$. Our goal is to apply Lemma~\ref{lemma:embedding:walk} with $V_i'$ as $V_i$, where $V_i'$ is given as follows: 
\begin{equation*}
V_i' = 
    \begin{cases}
        \text{$V_{j_i, w} \setminus \im(\psi_s)$} & \text{for all $i < \ell$,}\\
        \text{$V_{\sigma_{s+1, x_i},w}\setminus \im(\psi_s)$} & \text{for all $\ell \le i < k$, and}\\
        \text{$V_{\sigma_{s+1, x_i},c}\setminus \im(\psi_s)$} & \text{for all $k\le i$,}\\
    \end{cases}
\end{equation*}
where for each $i$, we define $x_i = x+((l-i) \mod 2)$. To apply Lemma~\ref{lemma:embedding:walk}, we again note that by Lemma~\ref{fact:split_regular_pairs} we have $(V_{i-1}',V_{i}')$ are $(\frac{\eps}{\delta},d-\eps)$-regular for each $i\in [s]$. This suffices to embed $T_{s+1}$ along this walk so that both \ref{embedding:1.1} and~\ref{embedding:1.2} hold. 
\end{proof}

\subsection{Embedding lemmas}
In this section, we apply Proposition~\ref{prop:main embedding} to prove a series of embedding results that will later be used in the proof of Theorems~\ref{thm:stability} and~\ref{thm:stability:unbounded}. 
\begin{lemma}\label{lem:emb1}Let $1/n\ll 1/m\ll \alpha\ll 1/k\ll \varepsilon\ll d\ll\delta\le 1$. Let $G$ be a graph with at most $4n$ vertices that has a partition $V(G)=V_1\cup\ldots \cup V_k$, and let $R$ be a graph with vertex set $[k]$ such that if $ij\in E(R)$, then $(V_i,V_j)$ is $(\eps,d)$-regular. Suppose there are disjoint sets $I, J\subset [k]$ with $|I|=|J|$ such that 
\stepcounter{propcounter}
\begin{enumerate}[label=\upshape{\textbf{\Alph{propcounter}\arabic{enumi}}}]
\item $|V_i|=m$ for all $i\in [k]$,
\item\label{lem:emb1:2}$|\cup_{i\in I}V_i|=|\cup_{i\in J}V_i|=(1+2\delta)n/2$,
\item $R$ is connected and non-bipartite, and
\item\label{lem:emb1:4} In $R$, there is a perfect matching between $I$ and $J$. 
\end{enumerate}
Then, $G$ contains a copy of every $n$-vertex tree $T$ with $\Delta(T)\le n^\alpha$.
\end{lemma}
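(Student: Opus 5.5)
The plan is to reduce Lemma~\ref{lem:emb1} to Proposition~\ref{prop:main embedding} by building a suitable assignment $\sigma:V(T)\to V(R)$. Since $R$ is non-bipartite, condition~\ref{prop:emb:3} is vacuous. So it suffices to construct $\sigma$ so that every piece of a $\beta$-decomposition lands on a single edge of the matching, and so that no cluster receives more than $(1-3\delta')m$ vertices for a suitable $\delta'$.

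Fix $\beta$ with $\alpha\ll\beta\ll 1/k$ and apply Lemma~\ref{beta_decomp_v2} to obtain a $\beta$-decomposition $T_1,\ldots,T_t$ with $t\le 100\beta^{-2}$. Let $S_1\cup S_2$ be the bipartition of $T$, and let $M=\{i_1j_1,\ldots,i_pj_p\}$ be the perfect matching between $I$ and $J$ given by~\ref{lem:emb1:4}, where $p=|I|$ and $pm=(1+2\delta)n/2$ by~\ref{lem:emb1:2}.

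We distribute the pieces among the matching edges, giving each edge $i_\ell j_\ell$ a set of pieces. For a piece $T_s$ assigned to $i_\ell j_\ell$, choose an orientation $x_s\in\{1,2\}$, and set $\sigma(V(T_s)\cap S_{x_s})=i_\ell$ and $\sigma(V(T_s)\cap S_{3-x_s})=j_\ell$. Condition~\ref{prop:emb:2} requires that $\sigma(V(T_s)\cap S_x)=\sigma_{s,x}$ be a single cluster, and this holds since the roles are fixed per piece.

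One subtlety is that consecutive pieces share a vertex $z$, which would receive conflicting values. To handle this, we apply Proposition~\ref{prop:main embedding}'s convention, under which the shared vertex is defined by the earlier piece. Alternatively, we count $z$ only once. This is harmless because the conflict only affects the boundary, and the proof of the proposition re-routes near roots through $V_{i,w}$ anyway.

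For the load balancing, each cluster receives at most $\sum|T_s|$ over the pieces assigned to its edge. We assign pieces greedily: process $T_1,\ldots,T_t$ in order, and place each piece on the edge $i_\ell j_\ell$ whose current maximum load $\max(|\sigma^{-1}(i_\ell)|,|\sigma^{-1}(j_\ell)|)$ is smallest. We then orient the piece so that its larger colour class goes to the currently lighter endpoint.

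The key inequality is that each piece adds at most $\beta n$ to any cluster. The total load is $n$, spread over $2p$ clusters of capacity $m$, with $2pm=(1+2\delta)n$. Hence the average load per cluster is $n/(2p)=m/(1+2\delta)\le(1-\delta)m$.

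Balance within an edge is not automatic, since $T$ may be very unbalanced, for instance a star-like tree with $|S_1|\approx n$. However, the pieces themselves have both classes, and the orientation choice lets us swap them. The greedy rule (lighter endpoint receives the bigger class) keeps the imbalance within each edge at most $\beta n$. The greedy rule across edges then keeps the max load at most the average plus $2\beta n$, which is at most $(1-\delta)m+2\beta n\le(1-3\delta')m$ with $\delta'=\delta/4$. This uses $m\ge n/(2k)$ (as $pm\approx n/2$ and $p\le k$) and $\beta\ll 1/k,\delta$.

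Finally, apply Proposition~\ref{prop:main embedding} with $\delta'$ in place of $\delta$ and $V_0=\emptyset$. The hypotheses hold: $R$ is connected, the pairs are regular, $\sigma$ satisfies~\ref{prop:emb:1} and~\ref{prop:emb:2}, and $|G|\le 4n$. The proposition then yields the copy of $T$.

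The main obstacle I expect is the balancing argument. Each piece $T_s$ may itself be very lopsided between $S_1$ and $S_2$, so the analysis must confirm that the swap-orientation greedy rule really bounds the per-cluster load by the average plus $O(\beta n)$, uniformly over all pieces. A further check is that the shared cut vertices between pieces do not break condition~\ref{prop:emb:2}.
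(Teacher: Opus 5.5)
Your proposal is correct. It follows the paper's skeleton: a $\beta$-decomposition, pieces assigned to matching edges with an orientation, then Proposition~\ref{prop:main embedding}. It differs only in the load-balancing step.

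The paper sends each piece $T_s$ to a uniformly random matching edge, with an independent uniformly random orientation. Every cluster then has expected load $n/(2|I|)=m/(1+2\delta)$. The paper then uses McDiarmid's inequality (via $\sum_s|T_s|^2\le\beta n^2$), an extra parameter $\mu$ with $\beta\ll\mu\ll\delta$, and a union bound over clusters, so that every load is within $\mu m$ of the mean.

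You replace this with a deterministic list-scheduling argument, and it checks out:
\begin{itemize}
\item If the endpoint loads are $a\le b$ and the piece has classes $x\ge y$, the new difference is $|(x-y)-(b-a)|\le\max\{x-y,b-a\}\le\beta n$.
\item So the larger endpoint load of each matching edge is at most half that edge's total plus $\beta n/2$.
\item Hence the edge chosen by your rule has larger endpoint load at most $\Sigma/(2p)+\beta n/2$, where $\Sigma$ is the load placed so far, and the placement raises it by at most $\beta n$.
\end{itemize}
This gives the explicit bound $n/(2p)+O(\beta n)$ with no probability and no auxiliary $\mu$. Your route is more elementary and gives an explicit error term. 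The paper's randomised scheme buys a uniform template that it reuses in Lemmas~\ref{lem:emb3:bipartite} and~\ref{cor:balanced_bipartite_all_trees}. There, pieces go to random edges of a bipartite reduced graph rather than a fixed matching, and a greedy analogue would need more care.

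Two minor points. First, the step $m/(1+2\delta)\le(1-\delta)m$, which the paper also uses, needs $\delta\le 1/2$. For the full range $\delta\le 1$, use $1/(1+2\delta)\le 1-\delta/3$ and take, say, $\delta'=\delta/12$. Second, your handling of shared cut vertices is the right reading of Proposition~\ref{prop:main embedding}, which the paper's proof of this lemma passes over silently. Resolving a conflict in favour of the earlier piece changes $\sigma$ only at the root of the later piece. That proof never uses $\sigma$ at vertices within distance $k$ of a piece's root, since these are routed through the sets $V_{i,w}$.
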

\begin{proof}
Let $S_1\cup S_2$ be the bipartition classes of $T$. We introduce two auxiliary constants $\beta,\mu$ so that $\alpha\ll \beta\ll 1/k\ll \mu\ll \delta$ and apply Lemma~\ref{beta_decomp_v2} to get a $\beta$-decomposition $T=T_1\cup\ldots \cup T_t$ with $t=O(\beta^{-2})$. Our goal is then to find an assignment $\sigma:V(T)\to V(R)$ so that \ref{prop:emb:1} and~\ref{prop:emb:2} hold. To do so, we will run a random process to assign the pieces $T_1,\ldots, T_t$ to the edges of the perfect matching between $I$ and $J$, as follows. 

For $i\in [t]$, let $c(i)\in I$  and  $Y_i\in\{0,1\}$ be chosen uniformly at random (all choices made independently). Let $i\in I$ be fixed and, for $s\in [t]$, let $X_{s,i}=1$ whenever $c(s)=i$ and $0$ otherwise. Let 
\[f(X_{1,i},\ldots, X_{t,i},Y_1,\ldots, Y_t)=\sum_{s\in [t]}(Y_s\cdot |V(T_s)\cap S_1|+(1-Y_s)\cdot|V(T_s)\cap S_2|)\cdot X_{s,i}\] 
and note that 
\[\mathbb E[f(X_{1,i},\ldots, X_{t,i},Y_1,\ldots, Y_t)]=\frac{1}{2|I|}\sum_{j\in [t]}|T_j|=\frac{n}{2|I|}.\]
Recalling that $|T_i|\le \beta n$ for each $i\in [t]$, we have $\sum_{i\in [t]}|T_i|^2\le \beta n\sum_{i\in [t]}|T_i|\le \beta n^2$. Then, McDiarmid's inequality (Lemma~\ref{lemma:mcdiarmid}) gives 
\[\mathbb P\left(\left|f(X_{1,i},\ldots, X_{t,i}, Y_1,\ldots, Y_t)-\tfrac{n}{2|I|}\right|>\mu m\right)\le 2\exp\left(\tfrac{-2\mu^2m^2}{2\sum_{i\in [t]}|T_i|^2}\right )\le 2\exp (-\mu^2/\beta k^2),\]
where we used that $km\ge m(|I|+|J|)\ge n$ by~\ref{lem:emb1:2}. Then, taking a union bound over all $i\in I$, 
\begin{equation}\label{eq:event:1}\left|\sum_{s\in c^{-1}(i)}\big(Y_s\cdot |V(T_s)\cap S_1|+(1-Y_s)\cdot |V(T_s)\cap S_2|)-\frac{n}{2|I|}\right|\le \mu m 
\end{equation}
holds for all $i\in I$ with probability at least $1-2k\exp(-\mu^2/\beta k^2)>0$ as $\beta\ll\mu,1/k$.

Let $\sigma:V(T)\to V(R)$ be defined as follows. For each $i\in I$, let $M(i)\in J$ be the matched vertex given by~\ref{lem:emb1:4}.  For each $s\in [t]$, if $Y_s=1$ then set $\sigma(v)={c(s)}$ for every $v\in V(T_s)\cap S_1$ and $\sigma(v)={M(c(s))}$ for every $v\in V(T)\cap S_2$, and otherwise set $\sigma(v)={M(c(s))}$ for every $v\in V(T_s)\cap S_1$ and $\sigma(v)={c(s)}$ for every $v\in V(T)\cap S_2$. Noting that 
\[|\sigma^{-1}(i)|= \sum_{s\in c^{-1}(i)}\big(Y_s\cdot |V(T_s)\cap S_1|+(1-Y_s)\cdot |V(T_s)\cap S_2|)\overset{\eqref{eq:event:1}}{\le }\frac{n}{2|I|}+\mu m\le (1-\delta)m,\]
as~\ref{lem:emb1:2} implies $n/2|I|= (1+2\delta)^{-1}m$, there is  a choice of $\sigma$ for which \ref{prop:emb:1} and~\ref{prop:emb:2} hold and thus Proposition~\ref{prop:main embedding} implies $G$ contains a copy of $T$.
\end{proof}
\begin{lemma}\label{lem:emb2}Let $1/n\ll 1/m\ll \alpha \ll 1/k\ll\eps\ll d\ll \delta\le 1$. Let $G$ be a graph on at most $4n$ vertices that has a partition $V(G)=V_1\cup \ldots\cup V_k$, and let $R$ be the graph with vertex set $[k]$ such that if $ij\in E(R)$, then $(V_i,V_j)$ is $(\eps,d)$-regular. Suppose there are disjoint sets $I,J\subset [k]$ such that the following properties hold.
\stepcounter{propcounter}
\begin{enumerate}[label=\emph{\textbf{\Alph{propcounter}\arabic{enumi}}}]
\item $|V_i|=m$ for all $i\in [k]$. 
\item $|\cup_{i\in I}V_i|=|\cup_{i\in J}V_j|=(1+100\delta)n/2$.
\item $R$ is connected and there is a perfect matching between $I$ and $J$.
\end{enumerate}
Then, $G$ contains a copy of every $n$-vertex tree $T$ with $\Delta(T)\le n^{\alpha}$ and partition classes of sizes $t_1,t_2$ such that $t_1\ge t_2\ge (1/2-\delta)n$.
\end{lemma}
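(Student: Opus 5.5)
Lemma~\ref{lem:emb2} drops the non-bipartite hypothesis of Lemma~\ref{lem:emb1}, so $R$ may be bipartite. I would therefore build an assignment $\sigma:V(T)\to V(R)$ that respects bipartition classes, meaning~\ref{prop:emb:3} holds as well as~\ref{prop:emb:1} and~\ref{prop:emb:2}. Then I apply Proposition~\ref{prop:main embedding}.

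\textbf{Decomposition and colour classes.} Introduce $\beta,\mu$ with $\alpha\ll\beta\ll 1/k\ll\mu\ll\delta$. Apply Lemma~\ref{beta_decomp_v2} to obtain a $\beta$-decomposition $T_1,\dots,T_t$ with $t=O(\beta^{-2})$. Let $S_1\cup S_2$ be the bipartition of $T$ with $|S_1|=t_1\ge|S_2|=t_2$.

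\textbf{Orientation when $R$ is bipartite.} Let $A\cup B$ be the bipartition of $R$. Since $R$ is connected, it has a unique bipartition. Every matching edge $iM(i)$ with $i\in I$ has one end in $A$ and one in $B$. For each matching edge, let $a(i)$ denote its end in $A$ and $b(i)$ its end in $B$. For every piece $T_s$ we are then forced to put $S_1$ on the $A$-side and $S_2$ on the $B$-side. That is, we set $\sigma(v)=a(c(s))$ for $v\in V(T_s)\cap S_1$ and $\sigma(v)=b(c(s))$ for $v\in V(T_s)\cap S_2$. This fixes the orientation, so the random bit $Y_s$ of Lemma~\ref{lem:emb1} is no longer available.

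\textbf{Random assignment and load bound.} Choose $c(s)\in I$ uniformly and independently for each $s$. For each $i\in I$, the load on $a(i)$ is $\sum_{s:c(s)=i}|V(T_s)\cap S_1|$, whose expectation is $t_1/|I|$. Similarly, the load on $b(i)$ has expectation $t_2/|I|$. By McDiarmid's inequality (Lemma~\ref{lemma:mcdiarmid}), using $\sum_s|T_s|^2\le\beta n^2$, and a union bound over the at most $k$ clusters, both loads deviate from their means by at most $\mu m$ with positive probability.

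This is the main point where the hypothesis on $T$ is used. We have $t_2\ge(1/2-\delta)n$, hence $t_1=n-t_2\le(1/2+\delta)n$. Also $|I|m=(1+100\delta)n/2$. Therefore
\[\frac{t_1}{|I|}\le\frac{(1/2+\delta)n}{(1+100\delta)n/2}\,m=\frac{1+2\delta}{1+100\delta}\,m\le(1-40\delta)m.\]
Adding $\mu m$ still leaves the load below $(1-3\delta')m$ with $\delta'=\delta$, say, after adjusting constants. So~\ref{prop:emb:1} holds. Property~\ref{prop:emb:2} holds by construction, since each $T_s$ is mapped onto the edge $a(c(s))b(c(s))$ with the correct sides. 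Property~\ref{prop:emb:3} holds because $S_1\to A$ and $S_2\to B$ globally.

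\textbf{Non-bipartite case.} If $R$ is not bipartite, I would apply the same argument, either orienting each matching edge arbitrarily or using random bits as in Lemma~\ref{lem:emb1}. The identical bound on $t_1/|I|$ gives~\ref{prop:emb:1}, and~\ref{prop:emb:3} is vacuous.

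\textbf{Conclusion and main obstacle.} Proposition~\ref{prop:main embedding} then yields a copy of $T$ in $G$. The main obstacle is the bipartite case: the orientation is forced, so the balance can no longer be achieved by randomising sides. It must come entirely from $t_1\le(1/2+\delta)n$ together with the $100\delta$ slack in the cluster sizes. The remaining work is checking that the constants in Proposition~\ref{prop:main embedding} (its $\delta$ versus ours) fit the hierarchy.
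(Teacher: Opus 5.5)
Your proposal is correct and is essentially the paper's argument. The paper reruns the proof of Lemma~\ref{lem:emb1} with $Y_s=1$ for every piece and uses $t_1\le(1/2+\delta)n$ against the $100\delta$ slack. Your orientation of each matching edge by the bipartition of $R$ (equivalently, swapping matched pairs so that $I$ and $J$ lie on opposite sides) is exactly what \ref{prop:emb:3} requires, a detail the paper's one-line sketch leaves implicit.

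The only slip is numerical. The bound $\frac{1+2\delta}{1+100\delta}\le 1-40\delta$ holds only for $\delta$ below about $0.014$. However, $|G|\le 4n$ forces $\delta\le 3/100$, so $\frac{1+2\delta}{1+100\delta}=1-\frac{98\delta}{1+100\delta}\le 1-\delta$, and applying Proposition~\ref{prop:main embedding} with $\delta/4$ in place of $\delta$ works.
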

The proof of Lemma~\ref{lem:emb2} is essentially the same as the proof of Lemma~\ref{lem:emb1}, but letting $Y_s=1$ deterministically for all $i\in [t]$, and so we omit the proof. 
\begin{lemma}\label{lem:emb3:bipartite}Let $1/n\ll 1/m\ll \alpha\ll 1/k \ll \varepsilon\ll \delta \ll d\le 1$. Let $G$ be a graph with at most $4n$ vertices that has a partition $V(G)=V_1\cup\ldots \cup V_k$, and let $R$ be a graph with vertex set $[k]$ such that if $ij\in E(R)$, then $(V_i,V_j)$ is $(\eps,d)$-regular. Suppose there are disjoint sets $I, J\subset [k]$ such that the following properties hold.
\stepcounter{propcounter}
\begin{enumerate}[label=\upshape{\textbf{\Alph{propcounter}\arabic{enumi}}}]
\item $|V_i|=m$ for all $i\in [k]$.
\item\label{lem:emb2:2} $|\cup_{i\in I}V_i|=(1+5\delta)t_1$.
\item $R$ is connected and bipartite with parts $I\cup J$.
\item\label{lem:emb2:4}In $R$, every $i\in I$ is adjacent to at least $(1+5\delta)\frac{t_2}{m}$ vertices $j\in J$.
\end{enumerate}
Then, $G$ contains a copy of every $n$-vertex tree $T$ with $\Delta(T)\le n^\alpha$ and bipartition classes of sizes $t_1$ and $t_2$.
\end{lemma}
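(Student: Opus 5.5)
The plan is to reduce to Proposition~\ref{prop:main embedding}. We build an assignment $\sigma:V(T)\to V(R)$ that sends every vertex of $S_1$ (the class of size $t_1$) into $I$ and every vertex of $S_2$ into $J$. Because $R$ is bipartite with parts $I\cup J$, such a $\sigma$ automatically preserves bipartition classes, so \ref{prop:emb:3} holds. Each piece $T_s$ will be sent to a single edge $\sigma_{s,1}\sigma_{s,2}\in E(R)$ with $\sigma_{s,1}\in I$ and $\sigma_{s,2}\in J$, which gives \ref{prop:emb:2}. The remaining task is \ref{prop:emb:1}: no cluster may receive more than $(1-3\delta')m$ vertices, for some $\delta'$ with $\alpha\ll\beta\ll1/k\ll\eps\ll d\ll\delta'$. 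Here we may take $\delta'$ a small multiple of $\delta$, after adjusting the hierarchy so that $d$ comes below it. Alternatively, we rerun the proof of Proposition~\ref{prop:main embedding} with the slack coming from $\delta$; only the ratio of the cluster capacity to the load matters.

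First, take a $\beta$-decomposition $T_1,\dots,T_t$ with $t=O(\beta^{-2})$ from Lemma~\ref{beta_decomp_v2}. Next, find a fractional structure in $R$ that spreads the load. Set $a:=t_1/|I|$, the average load per $I$-cluster. By \ref{lem:emb2:2} we have $a=m/(1+5\delta)$. Each $i\in I$ has degree at least $(1+5\delta)t_2/m$ in $J$. We want nonnegative edge weights $w_{ij}$ on $E(R)$ such that $\sum_j w_{ij}=1/|I|$ for every $i\in I$, and such that the $J$-load $t_2\sum_i w_{ij}$ is at most $(1-\delta)m$ for every $j\in J$. The natural choice is to spread each $i$'s weight uniformly over its $J$-neighbours, but this can overload a popular $j$. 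So instead we use a max-flow / Hall-type argument. Put capacity $(1-\delta)m/t_2$ on each $j\in J$. For any $X\subseteq I$, the neighbourhood satisfies $|N(X)|\ge(1+5\delta)t_2/m$, but we need $|N(X)|\cdot(1-\delta)m/t_2\ge|X|/|I|$, and that can fail when $|X|$ is large. This is the main obstacle: the minimum-degree hypothesis \ref{lem:emb2:4} only controls small $X$.

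To overcome it, I would change the plan: reverse the roles and route each vertex $i\in I$ through a star, embedding the $S_2$-part of each piece split across many $J$-neighbours. Concretely, assign each $T_s$ a uniformly random cluster $c(s)\in I$, so that by McDiarmid (Lemma~\ref{lemma:mcdiarmid}) each $i\in I$ receives $a\pm\mu m\le(1-2\delta)m$ vertices of $S_1$. Then distribute the $S_2$-vertices of the pieces assigned to $i$ over $N_R(i)$. To make \ref{prop:emb:2} compatible with this, first refine the decomposition: each piece attached to $i$ gets its own random neighbour $j\in N_R(i)$, chosen with probabilities $w_{ij}$ taken from a fractional $b$-matching. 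If the Hall condition fails for large $X$, we fall back on the counting fact $\sum_j(\text{load}_j)=t_2$ while $|J|m\ge$ the total $J$-capacity. I expect to need an additional implicit fact here, namely $|J|m\ge(1+\delta)t_2$, obtained from connectivity and the degree bound (since $|J|\ge\deg(i)$). The $J$-side concentration then again follows from McDiarmid with a union bound over $k$ clusters, since $\sum|T_s|^2\le\beta n^2$ and $\beta\ll1/k,\mu$.

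Finally, with loads at most $(1-3\delta')m$ on both sides, Proposition~\ref{prop:main embedding} applies and gives the copy of $T$. The step I expect to be the delicate one is constructing the fractional assignment $I\to J$ respecting $J$-capacities. I would try first to prove it via the max-flow min-cut theorem, using \ref{lem:emb2:4} for cuts with small $I$-side, and the global count $t_2<|J|m$ for cuts with large $I$-side.
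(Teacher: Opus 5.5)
Your framework matches the paper's: a $\beta$-decomposition, each piece $T_s$ sent to a single edge $c(s)j(s)$ of $R$ with $c(s)\in I$ uniform and $j(s)$ a random $R$-neighbour of $c(s)$, then McDiarmid plus a union bound, then Proposition~\ref{prop:main embedding}. However, the one step with real content, bounding the $S_2$-load on every cluster of $J$, is left unproven, and your reasoning around it is wrong in two places.

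First, choosing $j(s)$ uniformly in $N_R(c(s))$ does \emph{not} overload a popular $j$. By \ref{lem:emb2:4} every $i\in I$ has $d_R(i)\ge(1+5\delta)t_2/m$, and $N_R(j)\subseteq I$ since $R$ is bipartite. Hence the expected number of $S_2$-vertices assigned to $j\in J$ is
\[
t_2\sum_{i\in N_R(j)}\frac{1}{|I|\,d_R(i)}\;\le\;t_2\cdot\frac{|N_R(j)|}{|I|}\cdot\frac{m}{(1+5\delta)t_2}\;\le\;\frac{m}{1+5\delta}\;\le\;(1-4\delta)m .
\]
This is exactly the paper's computation.

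Second, the Hall/min-cut condition you worry about never fails. For every nonempty $X\subseteq I$ and any $i\in X$ we have $|N_R(X)|\ge d_R(i)$, hence
\[
|N_R(X)|\cdot\frac{(1-\delta)m}{t_2}\;\ge\;(1+5\delta)(1-\delta)\;\ge\;1\;\ge\;\frac{|X|}{|I|} .
\]
So \ref{lem:emb2:4} handles cuts of every size, not only those with small $I$-side.

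The fallback you propose for the (non-existent) large-$X$ case would not have worked. Write $L_j$ for the $S_2$-load on $j\in J$. The facts $\sum_{j\in J}L_j=t_2$ and $|J|m\ge(1+5\delta)t_2$ (the latter is true, as $|J|\ge d_R(i)$) control only the \emph{average} load on $J$. But \ref{prop:emb:1} needs a bound on every single cluster. Also, a cut whose $I$-side is $X$ has capacity governed by $|N_R(X)|$, which may be a proper subset of $J$.

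Likewise, splitting the $S_2$-part of one piece over several $J$-neighbours is not allowed by \ref{prop:emb:2}, although your refinement ends up assigning one $j$ per piece anyway.

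The missing observation is that the degree lower bound in \ref{lem:emb2:4} caps each $i$'s contribution to the expected load of any fixed $j$ at $m/((1+5\delta)|I|)$, and there are at most $|I|$ contributors. With this, the detour through fractional $b$-matchings and max-flow is unnecessary. McDiarmid applied to $\sum_s|V(T_s)\cap S_2|\,\mathbf{1}[j(s)=b]$, using that the $j(s)$ are independent across $s$, then finishes the argument as you intended.
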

\begin{proof}
    
Let $S_1\cup S_2$ be the bipartition classes of $T$ so that $|S_1|=t_1$ and $|S_2|=t_2$. We introduce two auxiliary constants $\mu,\beta$ so that $\alpha\ll \beta\ll 1/k\ll \mu\ll \delta$ and apply  Lemma~\ref{beta_decomp_v2} to get a $\beta$-decomposition $T=T_1\cup\ldots \cup T_t$. Again, our goal is to find an assignment $\sigma:V(T)\to V(R)$ so that \ref{prop:emb:1} and~\ref{prop:emb:2} hold.

For each $s\in [t]$, pick $i(s)\in I$ uniformly at random and then pick $j(s)\in J$ uniformly at random among the neighbours of $i(s)$ in $R$ (all choices are made independently). Let $a\in I$ and $b\in J$ be fixed. For each $s\in [t]$, let $X_{s,a}=1$ if $i(s)=a$ and $0$ otherwise and let $Y_{s,b}=1$ whenever $j(s)=b$ and 0 otherwise. Let 
\[f(X_{1,a},\ldots, X_{t,a})=\sum_{s\in [t]}X_{s,a}\cdot |V(T_s)\cap S_1|\quad\text{and}\quad g(Y_{1,b},\ldots, Y_{t,b})=\sum_{s\in [t]}Y_{s,b}\cdot |V(T_s)\cap S_2|.\] 
First, we clearly have
\[\mathbb E[f(X_{1,a},\ldots, X_{t,a})]=\frac{1}{|I|}\sum_{s\in [t]}|V(T_s)\cap S_1|=\frac{t_1}{|I|}\le \frac{m}{1+5 \delta} \le (1-4\delta)m,\]
as~\ref{lem:emb2:2} implies $m|I|=(1+5\delta)t_1$.
Secondly, using~\ref{lem:emb2:4} and conditioning we get
\[\mathbb P(Y_{s,b}=1)=\sum_{a\in N_R(b)}\mathbb P(Y_{s,b}=1|X_{s,a}=1)\mathbb P(X_{s,a}=1)\le \frac{m}{(1+5\delta)t_2} \] 
and thus
\[\mathbb E[g(Y_{1,b},\ldots, Y_{t,b})]=\sum_{s\in [t]}|V(T_s)\cap S_2|\cdot \mathbb P(Y_{s,b}=1)\le \frac{m}{1+5\delta}\le (1-4\delta)m\]
Also, using that $|T_i|\le \beta n$ for each $i\in [t]$, we get $\sum_{i\in [t]}|V(T_i)\cap S_j|^2\le \beta n\sum_{i\in [t]}|V(T_i)\cap S_j|\le \beta nt_j$ for $j\in [2]$. Then, McDiarmid's inequality (Lemma~\ref{lemma:mcdiarmid}) gives 
\begin{equation}\label{eq:emb2:1}\mathbb P\left(f(X_{1,a},\ldots, X_{t,a})>(1-4\delta)m+\mu m\right)\le 2\exp\left(\tfrac{-2\mu^2m^2}{\sum_{s\in [t]}|V(T_s)\cap S_1|^2}\right )\le 2\exp (-2\mu^2m^2/\beta nt_1)\end{equation}
and 
\begin{equation}\label{eq:emb2:2}\mathbb P\left(g(X_{1,b},\ldots, X_{t,b})>(1-4\delta)m+\mu m\right)\le 2\exp\left(\tfrac{-2\mu^2m^2}{\sum_{s\in [t]}|V(T_s)\cap S_2|^2}\right )\le 2\exp (-2\mu^2m^2/\beta nt_2).\end{equation}
Let $\sigma :V(T)\to I\cup J$ be defined as follows.  For every $s\in [t]$ and $v\in V(T_s)$, we let $\sigma(v)=i(s)$ if $v\in V(T_s)\cap S_1$ and $\sigma(v)=j(s)$ for if $v\in V(T_s)\cap S_2$. Noting that~\ref{lem:emb2:2} and~\ref{lem:emb2:4} imply $mk\ge m|I\cup J|\ge n$, by a union bound we have that, for every $i\in [k]$, 
\[\left|\sigma^{-1}(V_i)\right|\overset{\eqref{eq:emb2:1},\eqref{eq:emb2:2}}{\le} (1-4\delta)m+\mu m\le (1-3\delta )m\]
with probability at least $1-2k\exp(-2\mu^2m^2/\beta n^2)\ge 1-2k\exp(-2\mu^2/\beta k^2)>0$, as $\beta\ll\mu,1/k\ll1$. Therefore, there is a choice of $\sigma:V(T)\to V(R)$ for which \ref{prop:emb:1}--\ref{prop:emb:3} hold.\end{proof}

%\begin{corollary}\label{cor:balanced_bipartite_all_trees}

%Given $\Delta \in \NN$ and $1/\Delta \gg \delta > 0$, there is some $n_0 \in N$ such that the following holds for all $n \ge n_0$. Let $\epsilon, d$ be such that $0 < \epsilon \ll d \ll \delta$. and let $G$ be a 2-edge-coloured graph with $|G|= 2n-1$, $\delta(G) \ge 3/4|G|$. Let $H = R_H \cup B_H$ be a $(\epsilon, d)$-reduced 2-edge coloured graph of $G$ with $|H| = k$.  \\
%Suppose $H$ contains a monochromatic bipartite component $C = (X, Y)$ with $|C| \ge (1 - \delta)k, |X| \ge |Y| \ge (1/2-10\delta)k$ and minimum degree (in the colour of the component) at least $(\frac 14 - 10\delta)k$. Then $G$ contains all trees on $n$ vertices with maximum degree $\Delta$. 
%\end{corollary}
\begin{lemma}\label{cor:balanced_bipartite_all_trees} Let $1/n\ll 1/m\ll \alpha\ll 1/k\ll \eps\ll d\ll \delta\ll 1$. Let $G$ be a graph with at most $4n$ vertices that has a partition $V(G)=V_1\cup\ldots\cup V_k$, and let $R$ be a graph with vertex set $[k]$ such that if $ij\in E(R)$, then $(V_i,V_j)$ is $(\eps,d)$-regular. Suppose there are disjoint sets $I,J\subset [k]$ such that the following properties hold. 
\stepcounter{propcounter}
\begin{enumerate}[label=\upshape{\textbf{\Alph{propcounter}\arabic{enumi}}}]
    \item\label{emb3:1} $|V_i|=m$ for all $i\in [k]$.
    \item\label{emb3:2} $|\cup_{i\in I}V_i|\ge |\cup_{i\in J}V_i|\ge (1-\delta)n$.
    \item\label{emb3:3} $R$ is connected and bipartite with parts $I\cup J$.
    \item\label{emb3:4} In $R$, every $i\in I$ is adjacent to at least $(1-\delta)|J|/2$ vertices in $J$, and every $j\in J$ is adjacent to at least $(1-\delta)|I|/2$ vertices in $I$.
\end{enumerate}
Then, $G$ contains a copy of every $n$-vertex tree $T$ with $\Delta(T)\le n^{\alpha}$ and bipartition classes of sizes at most $(1-5\delta )n$. 
\end{lemma}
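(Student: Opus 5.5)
Since $R$ is connected and bipartite with parts $I\cup J$, the plan is to apply Proposition~\ref{prop:main embedding}. This requires an assignment $\sigma:V(T)\to V(R)$ that sends every piece of a $\beta$-decomposition onto an edge of $R$, respects the bipartition, and overloads no cluster. Let $S_1\cup S_2$ be the bipartition of $T$ with $|S_1|=t_1\ge|S_2|=t_2$, so $t_1\le(1-5\delta)n$. Introduce auxiliary constants $\alpha\ll\beta\ll1/k\ll\mu\ll\delta$. Apply Lemma~\ref{beta_decomp_v2} to obtain $T=T_1\cup\ldots\cup T_t$ with $t=O(\beta^{-2})$ and $|T_s|\le\beta n$. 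Condition~\ref{prop:emb:3} forces $S_1$ to go entirely to one side of $R$, and we send it to $I$. This is where the hypotheses are needed: $m|I|\ge m|J|\ge(1-\delta)n$, while $t_1,t_2\le(1-5\delta)n$. So each side has room $(1-\delta)n\ge(1+3\delta)t_j$, with slack of order $\delta n$, for its own colour class.

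The key point is that \ref{emb3:4} gives only half-degrees, so the naive scheme of Lemma~\ref{lem:emb3:bipartite}, which picks $i(s)$ uniformly and then a random neighbour $j(s)$, may overload some $j\in J$. It is balanced only if the induced distribution on $J$ is near uniform. To fix this, I will choose the edge $i(s)j(s)$ from a fractional perfect matching, i.e.\ a weighting $w$ on $E(R)$ with every vertex weight equal to $1/|I|$ on the $I$ side and roughly $1/|J|$ on the $J$ side. Since $|I|$ and $|J|$ may differ, I will actually use a normalised version: find $w:E(R)\to\mathbb R_{\ge0}$ with $\sum_e w(e)=1$, $\sum_{e\ni i}w(e)\le(1+\delta)/|I|$ for $i\in I$, and $\sum_{e\ni j}w(e)\le(1+\delta)/|J|$ for $j\in J$.

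The existence of $w$ comes from a Hall/max-flow argument. Make a network with source capacities $1/|I|$ into $I$, the edges of $R$, and capacities $(1+\delta)/|J|$ out of $J$. A cut $(A\subset I,B\subset J)$ certifies deficiency only if $N_R(A)\subset B$ and $|A|/|I|>|B|(1+\delta)/|J|$. If $A\neq\emptyset$, then $|B|\ge|N(A)|\ge(1-\delta)|J|/2$, so $|A|>|I|/2$ roughly. In that case every $j\in J$ has at least $(1-\delta)|I|/2$ neighbours in $I$, and so for $A$ of size more than $(1+\delta)|I|/2$ it meets every $j$, giving $B=J$ and a contradiction. The borderline range $|A|\approx|I|/2$ is the main obstacle, because half-degrees with $\delta$ loss are exactly tight. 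I expect to handle it using the $(1+\delta)$ slack in the capacities: one needs $|B|\ge|A||J|/((1+\delta)|I|)$, and this follows from $|B|\ge(1-\delta)|J|/2$ when $|A|\le(1+\delta)(1-\delta)|I|/2$. In the remaining window, $|A|$ exceeds $(1-\delta)|I|/2$ by at most about $\delta|I|$, and there I will count edges. The edges out of $A$ number at least $|A|(1-\delta)|J|/2$ and all land in $B$, while each vertex of $J\setminus B$ has at least $(1-\delta)|I|/2$ neighbours in $I\setminus A$, which has size about $|I|/2$. Combining these shows that $J\setminus B$ is small, which gives the required bound. Connectivity is not needed here.

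Given $w$, I will independently pick an edge $e_s=i(s)j(s)$ with probability $w(e_s)$ for each $s\in[t]$. I then set $\sigma=i(s)$ on $V(T_s)\cap S_1$ and $\sigma=j(s)$ on $V(T_s)\cap S_2$, which gives \ref{prop:emb:2} and \ref{prop:emb:3}. The expected load is at most $(1+\delta)t_1/|I|\le(1+\delta)(1-5\delta)m/(1-\delta)\le(1-3.5\delta)m$ on each $i\in I$, and similarly on each $j\in J$. McDiarmid's inequality, as in \eqref{eq:emb2:1}, with $\sum_s|T_s|^2\le\beta n^2$ and a union bound over the $k$ clusters, gives loads at most $(1-3\delta)m$ with positive probability. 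Then Proposition~\ref{prop:main embedding} embeds $T$.
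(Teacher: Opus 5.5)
There is a genuine gap, and it is exactly at the step you flag as the main obstacle. The weighting $w$ you ask for need not exist, and no edge count can show that $J\setminus B$ is small. Ignoring rounding, split $I=I_1\cup I_2$ and $J=J_1\cup J_2$ with $|I_1|=(1+\delta)|I|/2$, $|I_2|=(1-\delta)|I|/2$, $|J_1|=(1-\delta)|J|/2$ and $|J_2|=(1+\delta)|J|/2$. Let $R$ consist of the complete bipartite graphs between $I_1$ and $J_1$ and between $I_2$ and $J_2$, plus one edge between $I_2$ and $J_1$. This $R$ is connected and bipartite with parts $I\cup J$, and it satisfies the degree hypothesis: vertices of $I_1$ have exactly $(1-\delta)|J|/2$ neighbours and vertices of $J_2$ exactly $(1-\delta)|I|/2$. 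Every edge meets $I_2$ or $J_1$. Hence any $w$ obeying your vertex bounds (even with $(1+\delta)/|I|$ on the $I$ side) has total weight at most $(1+\delta)(|I_2|/|I|+|J_1|/|J|)=1-\delta^2<1$. In your cut language, $A=I_1$ and $B=N_R(A)=J_1$ form a deficient cut with $|A|$ inside your window, while $|J\setminus B|=(1+\delta)|J|/2$. So the assignment step fails as written.

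The missing observation is where the slack really lies. Since $t_2\le n/2$ and $m|J|\ge(1-\delta)n$, the $J$-clusters only need to carry about half their capacity. So you can give them sink capacity $3/(2|J|)$ instead of $(1+\delta)/|J|$, keeping capacity $1/|I|$ on the source side. Hall's condition is then immediate:

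\begin{itemize}
\item if $|A|\le(1+\delta)|I|/2$, then $\tfrac32|N_R(A)|/|J|\ge\tfrac34(1-\delta)\ge\tfrac12(1+\delta)\ge|A|/|I|$;
\item otherwise every $j\in J$ has a neighbour in $A$, so $N_R(A)=J$.
\end{itemize}

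A flow of value $1$ saturates all source edges. The expected load is therefore exactly $t_1/|I|\le(1-5\delta)m/(1-\delta)\le(1-4\delta)m$ on each $i\in I$, and at most $3t_2/(2|J|)\le 3m/(4(1-\delta))$ on each $j\in J$. McDiarmid's inequality and Proposition~\ref{prop:main embedding} then finish the proof as you describe.

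Your estimate $(1+\delta)(1-5\delta)/(1-\delta)\le 1-3.5\delta$ is also wrong, since the left side equals $1-3\delta-8\delta^2/(1-\delta)$. This does not matter once the source edges are saturated, because that factor $1+\delta$ disappears.

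With this repair, your one-shot fractional assignment is a valid alternative to the paper's argument. The paper first sends the unbalanced case $t_2\le(1-100\delta)n/2$ to Lemma~\ref{lem:emb3:bipartite}. In the balanced case it absorbs the factor-$2$ loss from half-degrees with a two-round scheme. Most pieces get a uniformly random $j(s)\in J$ and then a random neighbour in $I$. A reserved random set $K$ of pieces is placed greedily onto underloaded clusters at the end. Your route avoids both the case split and the second round.
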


\begin{proof}Let $S_1\cup S_2$ be the bipartition classes of $T$ and assume  $(1-5\delta)n\ge |S_1|\ge |S_2|$. We introduce two auxiliary constants $\mu,\beta$ so that $\alpha\ll \beta\ll 1/k\ll \mu\ll \delta$ and apply Lemma~\ref{beta_decomp_v2} to get a $\beta$-decomposition $T=T_1\cup\ldots \cup T_t$. Again, our goal is to find an assignment $\sigma:V(T)\to V(R)$ so that \ref{prop:emb:1} and~\ref{prop:emb:2} hold. Note that if $|S_2|\le (1-100\delta)n/2$, then every $i\in I$ is adjacent to at least $(1-\delta)|J|/2\ge (1-\delta)^2n/2m\ge (1+5\delta)|S_2|/m$ clusters in $J$, and thus Lemma~\ref{lem:emb3:bipartite} implies $T$ is contained in $G$. Therefore, we may further assume that $(1-100\delta)\frac{n}{2}\le |S_2|\le |S_1|\le (1+200\delta)\frac{n}{2}.$

For $p=800\delta$, let $K\subset [t]$ be a random subset such that $\mathbb P(i\in K)=p$ and all choices are independent. For $j\in [2]$, we have $\mathbb E[|\cup_{i\in K}V(T_i)\cap S_j|]=p|S_j|$, Chernoff's bound (Lemma~\ref{lemma:chernoff}) gives that 
\[\mathbb P\left(\big||\cup_{i\in K}V(T_i)\cap S_j|-p|S_j|\big|>p|S_j|/2\right)\le 2\exp(-p^2|S_j|/12).\]
Fix a choice of $K\subset [t]$ such that $p|S_j|/2\le |\cup_{i\in K}V(T_i)\cap S_j|\le 3p|S_j|/2$ for $j\in [2]$. We will assign $(T_i)_{i\in [t]}$ in two rounds. In the first round, for each $s\in [t]\setminus K$, pick $j(s)\in J$ uniformly at random and then pick $i(s)\in I$ uniformly at random among the neighbours of $j(s)$ in $R$ (all choices are made independently). Let $a\in J$ and $b\in I$ be fixed. For each $s\in [t]\setminus K$, let $X_{s,a}=1$ if $j(s)=a$ and $0$ otherwise and let $Y_{s,b}=1$ whenever $i(s)=b$ and 0 otherwise. Let 
\[f(X_{1,a},\ldots, X_{t,a})=\sum_{s\in [t]\setminus K}X_{s,a}\cdot |V(T_s)\cap S_2|\quad\text{and}\quad g(Y_{1,b},\ldots, Y_{t,b})=\sum_{s\in [t]\setminus K}Y_{s,b}\cdot |V(T_s)\cap S_1|.\]
First, we clearly have $\mathbb E[f(X_{1,a},\ldots, X_{t,a})]=\frac{1}{|J|}\sum_{s\in [t]\setminus K}|V(T_s)\cap S_2|\le \frac{|S_2|}{|J|}\le \frac{m}{2(1-\delta)},$ 
as~\ref{emb3:2} implies $m|J|\ge (1-\delta)n\ge (1-\delta)2|S_2|$.
Secondly, using~\ref{emb3:4} and conditioning we get $\mathbb P(Y_{s,b}=1)\le\frac{2}{(1-\delta)|I|}$
and thus
\[\mathbb E[g(Y_{1,b},\ldots, Y_{t,b})]\le \frac{2}{(1-\delta)|I|}\cdot ((1+200\delta)n/2-|\cup_{i\in K}V(T_i)\cap S_1|)\le (1-5\delta)m,\]
where we used that $|\cup_{i\in K}V(T_i)\cap S_1|\ge p|S_1|/2\ge 200\delta n$ and $|I|m\ge (1-\delta)n$. Similarly to the proofs of Lemmas~\ref{lem:emb1} and~\ref{lem:emb2}, McDiarmid's inequality (Lemma~\ref{lemma:mcdiarmid}) gives that with positive probability,
\[f(X_{1,a},\ldots, X_{t,a})\le 3m/4\quad\text{and}\quad g(Y_{1,b},\ldots, Y_{t,b})\le (1-4\delta)m\]
for all $a\in J$ and $b\in I$. Let $\sigma:V(T)\to I\cup J$ so that, for every $s\in [t]\setminus K$ and $v\in V(T_s)$, $\sigma(v)=i(s)$ if $v\in V(T_s)\cap S_1$ and $\sigma(v)=j(s)$ if $v\in V(T_s)\cap S_2$. It only remains to assign the vertices in $\cup_{s\in K}T_s$. Let $I'\subset I$ be indices such that $|\sigma^{-1}(i)|\le 3m/4$. Then we have $|I\setminus I'|\cdot 3m/4\le (1+200\delta)n/2$ and hence
\begin{equation}\label{eq:I'}|I'|m\ge 3m|I|/4-(1+200\delta)n/2\ge n/8.  
\end{equation}
Now, we greedily assign the remaining subtrees as follows. Label $K=\{s_1,\ldots, s_{|K|}\}$. For $\ell\in [|K|]$ in turn, pick some $i\in I'$ such that $|\sigma^{-1}(i)|\le (1-4\delta)m$ and some $j\in N_R(i)$ with $|\sigma^{-1}(j)|\le (1-4\delta)m$ and set $\sigma(v)=i$ for every $v\in V(T_{s_\ell})\cap S_1$ and $\sigma(v)=j$ for every $v\in V(T_{s_\ell})\cap S_2$. This is always possible as \eqref{eq:I'} implies there is always a choice for $i\in I'$, and if there is no choice for $j$ then we would have assigned 
\[d_R(i)\cdot (m/4-4\delta m)\ge |J|m/10\ge n/20\]
vertices,  contradicting that $|\cup_{s\in K}T_s|\le 3pn/2\le 2000\delta n$. Therefore, we have found $\sigma:V(T)\to V(R)$ for which \ref{prop:emb:1}--\ref{prop:emb:3} hold.
\end{proof}

\begin{lemma}\label{lemma:bipartite:almost:spanning} Let $1/n\ll \alpha\ll\mu,\delta\le 1$ and let $\gamma\in (0,1)$ with $\mu\ll\gamma$. Let $G$ be a bipartite graph with parts $U_1,U_2$ such that $n\ge |U_i|\ge (1-\mu)n$ for $i\in [2]$, and suppose that for every $i\in [2]$ and $u\in U_i$, $d(u)\ge (1+\delta)|U_{3-i}|/2$. Let $T$ be an $n$-vertex tree with $\Delta(T)\le n^{\alpha}$ and bipartition classes of sizes at most $(1-\gamma)n$. Then, for every $t\in V(T)$ and $u_0\in V(G)$, there is a copy of $T$ in $G$ where $t$ is copied to $u_0$.
\end{lemma}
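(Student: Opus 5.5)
We prove Lemma~\ref{lemma:bipartite:almost:spanning} by applying the regularity lemma to $G$ and then invoking Lemma~\ref{cor:balanced_bipartite_all_trees} with the rooted version of Proposition~\ref{prop:main embedding}. Introduce constants
\[
\alpha\ll 1/k_0\ll\eps\ll d\ll \delta'\ll \min\{\mu,\delta,\gamma\},
\]
where $\delta'$ plays the role of $\delta$ in Lemma~\ref{cor:balanced_bipartite_all_trees}. We need $\delta'\le \gamma/5$, and $\mu\ll\gamma$ is what allows this.

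First apply Theorem~\ref{thm:ColRegLem}, with both colours identified, refining the partition $\{U_1,U_2\}$. This gives clusters $V_1,\ldots,V_k$ of size $m$, each inside $U_1$ or $U_2$, an exceptional set $V_0$ and a cleaned graph $G'$. Let $I$ and $J$ be the sets of clusters in $U_1$ and $U_2$, and let $R$ be the reduced graph, which is bipartite with parts $I\cup J$. Property \ref{item:(R3)} and the degree hypothesis give, by a computation like Lemma~\ref{fact:H_min_deg}, that each $i\in I$ has at least $(1+\delta/2)|J|/2$ neighbours in $J$, and symmetrically for $J$.

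Since $\delta(R)$ exceeds half of each side, $R$ is connected: two vertices of $I$ share a neighbour in $J$, and every vertex of $J$ has a neighbour in $I$. Also $|\cup_I V_i|,|\cup_J V_j|\ge (1-\mu-\eps)n\ge (1-\delta')n$. Hence \ref{emb3:1}--\ref{emb3:4} hold, and the bipartition classes of $T$ have size at most $(1-\gamma)n\le(1-5\delta')n$.

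If $|\cup_I V_i|<|\cup_J V_j|$, rename $I$ and $J$; the lemma is symmetric. Lemma~\ref{cor:balanced_bipartite_all_trees}, through Proposition~\ref{prop:main embedding}, then embeds $T$ with any vertex $v_0$ sent to any typical vertex of a cluster in the class prescribed by $\sigma$.

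The main obstacle is prescribing $t\mapsto u_0$ for an arbitrary $u_0$, which may lie in $V_0$ or be atypical. We handle it as follows, taking $u_0\in U_1$ without loss of generality.
\begin{enumerate}[label=\upshape{(\arabic{enumi})}]
\item Embed $t$ at $u_0$ by hand, and embed the subtree $T^{\le h}$ of depth $h$ around $t$ greedily, where $h$ is constant. This has at most $n^{h\alpha}=o(n)$ vertices, and the minimum degree $\ge n/3$ leaves room.
\item For each neighbour of $t$ we have at least $(1+\delta)|U_2|/2$ choices in $G$. Most of these lie in clusters and are typical with respect to many neighbouring clusters, since for each cluster pair at most $\eps m$ vertices are atypical and \ref{item:(R3)} loses only $(2d+\eps)n$ degree. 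We choose these images to be typical.
\item The rest of $T$ splits into the subtrees hanging from the children of $t$. We embed them one at a time via Proposition~\ref{prop:main embedding}, with roots at typical images and a common assignment $\sigma$.
\end{enumerate}

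For step (3), it is cleaner to work with $T$ itself and only fix $\sigma(v_0)$, where $v_0$ is a neighbour $t'$ of $t$. We first embed $t'$ at a typical neighbour $u'$ of $u_0$. We then apply the rooted form of Proposition~\ref{prop:main embedding} to $T-t$ in the graph $G-u_0$, with the root $t'$ sent to $u'$. The remaining neighbours of $t$ are then embedded into $N(u_0)$. To guarantee this, we reserve in advance a random set $W\subset N(u_0)$ of size $\Delta(T)$, which removes only $o(m)$ vertices from any cluster.

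Concretely, at the start we pick, for each child $c$ of $t$, an image in $N(u_0)$ that is typical for its cluster. We then treat $T-t$ as a forest of at most $n^{\alpha}$ components, each with a prescribed typical root image. Proposition~\ref{prop:main embedding} embeds these sequentially, since its proof only ever uses one typical starting vertex per piece. The counts used in the proof of Lemma~\ref{cor:balanced_bipartite_all_trees} absorb the $o(n)$ extra vertices.
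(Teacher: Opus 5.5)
Your argument is correct in outline, and its core is the paper's:
\begin{itemize}
\item regularity refining $\{U_1,U_2\}$;
\item a bipartite reduced graph in which every cluster sees more than half of the other side, hence connected;
\item the random assignment from the proof of Lemma~\ref{cor:balanced_bipartite_all_trees};
\item Proposition~\ref{prop:main embedding}.
\end{itemize}
You differ only in how you force $t\mapsto u_0$.

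The paper does this in one step. By averaging, $u_0$ has many neighbours in some cluster $V_j$ on the other side. It takes an $R$-neighbour $i$ of $j$, adds $u_0$ to $V_i$, and chooses $\sigma$ so that the piece containing $t$ lies on the edge $ij$ with $\sigma(t)=i$. The rooted form of Proposition~\ref{prop:main embedding} then applies with a single start vertex. That vertex is only used to begin the walk, so it does not matter that $u_0$ may lie in $V_0$.

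You instead place every child of $t$ at a typical vertex of $N(u_0)$ and embed the forest $T-t$ with up to $n^{\alpha}$ prescribed root images. This never alters the regular partition. The cost is that you need a multi-root forest version of both the assignment and Proposition~\ref{prop:main embedding}, which you assert rather than check. The extension is routine:
\begin{itemize}
\item the at most $n^{\alpha}$ extra pieces send at most $2n^{\alpha(k+2)}=o(m)$ further vertices into the reserved parts $V_{i,w}$;
\item each root image must be typical to the reserved part of a neighbouring cluster;
\item the root images must be excluded from later use.
\end{itemize}

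Three points need repair.

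First, your hierarchy $\delta'\ll\min\{\mu,\delta,\gamma\}$ contradicts the covering bound $|\bigcup_{i\in I}V_i|\ge(1-\delta')n$ that you need for Lemma~\ref{cor:balanced_bipartite_all_trees}. Since $|V_0|\le 2\varepsilon n$, that bound requires $\delta'\ge\mu+2\varepsilon$. So you need $\mu+2\varepsilon\le\delta'\le\gamma/5$, for instance $\delta'=2\mu$ with $\varepsilon\ll d\ll\mu,\delta$; this is exactly where $\mu\ll\gamma$ enters. You also need $\alpha\ll 1/K_0$ rather than $1/k_0$, since the proposition needs $\alpha\ll 1/k$.

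Second, your intermediate version does not work. You apply the proposition to $T-t$ with $t'\mapsto u'$, and afterwards put the other neighbours of $t$ into a reserved $W\subset N(u_0)$. But $T-t$ is a forest, and a neighbour of $t$ placed afterwards would also have to be adjacent to the images of its already embedded children. Only your final ordering, which fixes all children's images first, is coherent.

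Third, renaming $I,J$ and assuming $u_0\in U_1$ are two separate normalisations. You therefore need the assignment to work with either class of $T$ on either side. This does hold, because the proof of Lemma~\ref{cor:balanced_bipartite_all_trees} uses only that both sides cover at least $(1-\delta')n$ vertices, together with the symmetric degree condition. The paper relies on the same symmetry implicitly.
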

\begin{proof}
We introduce auxiliary constants $\eps,d\in (0,1)$ and $k_0,K_0\in\mathbb N$ so that $1/n\ll1/K_0\ll 1/k_0\ll\eps\ll d\ll \alpha$. For some $k_0\le k\le K_0$, Theorem~\ref{thm:ColRegLem} gives a partition $V(G)=V_0\cup V_1\cup\ldots\cup V_k$ and a subgraph $G'\subset G$ satisfying~\ref{item:(R1)}--\ref{item:(R5)}. As $G$ is bipartite, there is a partition $[k]=I_A\cup I_B$ such that $V_i\subset U_1$ for every $i\in I_1$ and $V_j\subset U_2$ for every $j\in I_2$. Moreover, by \ref{item:(R3)} and the degree conditions in $G$, we have that for every $i\in [2]$ and $u\in U_{3-i}$, $d_{G'}(u)\ge (1+\delta/2){|U_{3-i}|}/{2}$.
Let $R$ be the $(\eps,d)$-reduced graph of $G$. Note that $R$ is bipartite with bipartition $I_1\cup I_2$, and 
\begin{itemize}
    \item  $d_{R}(j)\ge (1+\delta/4){|I_{3-i}|}/{2}$ for every $i\in [2]$ and $j\in I_i$,\end{itemize}
which implies $R$ is connected. We introduce an axillary constant $\beta$ such that $\alpha \ll \beta \ll 1/k$,  and create a $\beta$-decomposition $T = T_1 \cup ... \cup T_\ell$ of $T$, where $t \in V(T_1)$. Suppose without loss of generality that $u_0\in U_1$. To start the embedding at $u_0$, we first pick $j\in [k]$ such that $V_j\subset U_2$ and 
\[d_G(u_0,V_j)\ge \frac{(1+\delta)|U_2|/2-|V_0|}{k}\ge (d-\varepsilon)|V_j|,\]
and then pick $i\in [k]$ so that $(V_i,V_j)$ is $(\eps,d)$-regular. Add $u_0$ to $V_i$ and, as in the proof of Lemma \ref{cor:balanced_bipartite_all_trees}, find an assignment $\sigma : V(T) \to [k]$ such that \ref{prop:emb:1}--\ref{prop:emb:3} hold and $T_1$ is assigned to $(V_i,V_j)$ with $\sigma(t)=u_0$. We can therefore use \Cref{prop:main embedding} to embed $T$ in $G$ with $t$ copied to $u_0$. 

\end{proof}

\section{Stability}\label{sec:stability}
As mentioned in the introduction, for the proof of Theorems~\ref{thm:main},~\ref{thm:main3} and~\ref{thm:main2} we follow a stability approach, whereby we prove that in any blue/red coloured graph we can either find a monochromatic copy of every bounded-degree tree, or the structure of our graph is close to the extremal example.  To prove such a stability result, we need the following lemma, which follows by combining Lemmas~4.1 and~4.2 from~\cite{cycles_2012}.

\begin{lemma}\label{lem:H_strcuture}
Let $1/N\ll 1/\delta\ll 1$ and let $G$ be blue/red coloured $N$-vertex graph with $\delta(G) \ge (3/4 - \delta)N$. Then, one of the following holds. 
\begin{enumerate} [label=\upshape(\roman{enumi})]
    \item\label{structure1} There is a non-bipartite component of $\red{G}$ or $\blue{G}$ which contains a matching with $(1/2 + \delta)N$ vertices. 
    \item\label{structure2} There is a bipartite component $C$ of $\red{G}$ or $\blue{G}$ such that $|C| \ge (1-5\delta)N$ and $C$ has a matching with $(1/2 + \delta)N$ vertices. 
    \item There is a set $S$ with $|S|\ge (2/3 - \delta)N$ such that either $\Delta(\red{G}[S]) \le 10 \delta N$ or $\Delta(\blue{G}[S]) \le 10 \delta N$.\label{structure3}
    \item There is a partition $V(G) = U_1 \cup U_2\cup U_3\cup U_4$ with $|U_i| \ge (1/4 - 3\delta)N$ for all $i\in [4]$, such that  $\blue{e}(U_1 \cup U_2,U_3 \cup U_4)=0$ and $\red{e}(U_1 \cup U_3,U_2 \cup U_4)=0$.\label{structure4}
\end{enumerate}
\end{lemma}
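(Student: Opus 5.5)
The plan is to redo the argument of Lemmas~4.1 and~4.2 of~\cite{cycles_2012} in two stages. First I analyse the component structure of $\red{G}$ and $\blue{G}$. Then, inside a large monochromatic component, I analyse matchings via the Tutte--Berge formula. Throughout I assume that neither \ref{structure1} nor \ref{structure2} holds, and aim to derive \ref{structure3} or \ref{structure4}.

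\emph{Stage 1 (components).} For $v\in V(G)$ let $R(v)$ and $B(v)$ be the red and blue components containing $v$. Since $N_G(v)\subseteq R(v)\cup B(v)$, we get $|R(v)|+|B(v)|\ge (3/4-\delta)N$, so every vertex lies in a monochromatic component of order at least $(3/8-\delta/2)N$.

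Suppose first that no monochromatic component has order at least $(1-5\delta)N$. Then I claim both colours have exactly two components of significant size, arranged as a $2\times 2$ grid. The argument runs as follows.
\begin{itemize}
\item Take two vertices $u,w$ with $R(u)\neq R(w)$ and $B(u)\neq B(w)$.
\item Neither can be adjacent to anything in $R(w)\cap B(w)$ from the other side, so $R(u)\cup B(u)$ and $R(w)\cup B(w)$ must overlap heavily.
\item Counting with the degree bound $(3/4-\delta)N$ then forces the cells $U_1=R_1\cap B_1$, $U_2=R_1\cap B_2$, $U_3=R_2\cap B_1$, $U_4=R_2\cap B_2$ to each have order about $N/4$.
\item Vertices outside these four cells have too small neighbourhoods to exist, up to relabelling colours and sides.
\end{itemize}
This yields \ref{structure4}, with blue edges only inside $U_1\cup U_2$ or $U_3\cup U_4$ and red edges only inside $U_1\cup U_3$ or $U_2\cup U_4$. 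The $3\delta$ losses come from the $\delta N$ slack in the degree condition.

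\emph{Stage 2 (matchings).} Now suppose some component $C$, say of $\red{G}$, has $|C|\ge(1-5\delta)N$. If $C$ contains a matching on $(1/2+\delta)N$ vertices, then we are in \ref{structure1} if $C$ is non-bipartite and in \ref{structure2} if $C$ is bipartite. So assume $\nu(\red{G}[C])<(1/4+\delta/2)N$.

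By the Tutte--Berge formula there is a set $X\subseteq C$ such that $\red{G}[C]-X$ has at least $|C|-|X|-2\nu$ odd components. I then split into cases on $|X|$.
\begin{itemize}
\item If $|X|$ is small, $\red{G}-X$ has many components, which are mostly tiny (singletons or near-singletons). Their union $S$ has $|S|\ge(2/3-\delta)N$. Every red edge inside $S$ goes through $X$ or stays inside a tiny component, so $\Delta(\red{G}[S])\le 10\delta N$, which is \ref{structure3}.
\item If $|X|$ is larger, I count edges from the components of $\red{G}-X$: they have red degree into $X$ only, and blue degree at least $(3/4-\delta)N$ minus their red degree. This forces the blue graph on $V\setminus X$ to be almost complete, and hence to have a large non-bipartite blue component with a big matching. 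That gives \ref{structure1} for blue, which we have excluded, unless the numbers land exactly in the regime where $S$ can be chosen as in \ref{structure3}.
\end{itemize}

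The main obstacle is this Tutte--Berge case analysis. The threshold $2/3$ arises as the balance point between $|X|$ and the number of odd components: roughly, $\nu<N/4$ combined with min degree $3N/4$ gives $|X|\lesssim N/3$ when the odd components are nearly isolated. Tracking the $\delta$-losses so that they produce exactly the constants $10\delta$ and $(2/3-\delta)$ is the delicate part; this is where I would follow the computations of~\cite{cycles_2012} most closely.
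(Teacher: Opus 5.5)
The paper does not prove this lemma itself; it only quotes Lemmas~4.1 and~4.2 of Benevides, \L uczak, Scott, Skokan and White. Your reconstruction has a genuine gap, and it is in Stage~1. Your argument there uses only the degree bound, and the degree bound alone cannot produce the $2\times 2$ grid. Take disjoint sets $W,X,Y$ with $|W|=N/10$, $|X|=13N/20$ and $|Y|=N/4$. Join all pairs inside $W\cup X$ and all pairs inside $X\cup Y$, with no $W$--$Y$ edges. Colour the edges inside $W$ and between $X$ and $Y$ red, and colour all other edges blue. Then $\delta(G)=3N/4-1$, the red components are $W$ and $X\cup Y$ (of order $9N/10$), and the blue components are $W\cup X$ (of order $3N/4$) and $Y$. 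So no component reaches $(1-5\delta)N$. Yet your cells have orders $13N/20$, $N/4$, $N/10$ and $0$. Moreover (iv) fails, because under (iv) every blue component lies in $U_1\cup U_2$ or in $U_3\cup U_4$ and so has order at most $(1/2+6\delta)N$. The lemma survives only because (i) holds, via the blue clique on $X$. So $\neg$(i) must enter Stage~1 in an essential way. What you actually need first is that, under $\neg$(i) and $\neg$(ii), \emph{every} monochromatic component has order at most $(1/2+O(\delta))N$; only after that can counting give the grid. Proving this requires a matching analysis for every component of order noticeably above $N/2$, not just those of order at least $(1-5\delta)N$. This includes bipartite components of intermediate order, where (ii) is unavailable and the win must come from the other colour. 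The red component $X\cup Y$ above is such a case: its red matching number is only $N/4$.

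Stage~2 is also not sound as sketched, for three reasons.
\begin{enumerate}
\item Tutte--Berge gives a set $X$ with exactly $|C|+|X|-2\nu$ odd components, not $|C|-|X|-2\nu$. Dropping the $2|X|$ term ruins your large-$|X|$ case.
\item For small $\delta$, outcome (iii) implies (i). For $v\in S$ one has $\blue{d}(v,S)\ge |S|-(1/4+11\delta)N>|S|/2$, so $\blue{G}[S]$ is Hamiltonian and contains a triangle. This is exactly how Case~(iii) is handled in the proof of Theorem~\ref{thm:stability}. Hence, under your standing assumption, (iii) can never occur. Stage~2 must therefore be a proof by contradiction: a component of order at least $(1-5\delta)N$ with no large matching must force (i) in the other colour.
\item Your small-$|X|$ bullet is false. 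Tutte--Berge only guarantees about $(1/2-6\delta)N$ odd components, and the tiny ones need not cover $(2/3-\delta)N$ vertices. For example, let $G=K_N$ with $V=S_0\cup X\cup L$, where $|S_0|=N/2$ and $|X|=\delta N/2$. Colour red every edge meeting $X$ and every edge inside $L$, and colour the rest blue. The red graph is spanning and connected with $\nu\le |X|+|L|/2<(1/4+\delta/2)N$, and its tiny components (the singletons of $S_0$) cover only $N/2$ vertices. No set of order $(2/3-\delta)N$ is sparse in either colour. The true outcome is (i) in blue, which your case split never reaches.
\end{enumerate}
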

The next theorem is our stability result for bounded-degree trees.
\begin{theorem}\label{thm:stability}Let $\Delta\ge 2$ and let $1/n\ll\mu\ll 1/\Delta$. Let $G$ be a blue/red coloured graph with at least $2n-2$ vertices and $\delta(G)\ge  \lfloor \frac{3|G|}{4} \rfloor$ and suppose $T$ is an $n$-vertex tree with $\Delta(T)\le \Delta$. Then, either $G$ contains a monochromatic copy of $T$ or, after possibly swapping colours, there is a partition $V(G)=U_0\cup U_1\cup U_2\cup U_3\cup U_4$ such that 
\begin{itemize}
    \item $|U_i|\ge (1-\mu)n/2$ for all $i\in [4]$,
    \item $\blue{e}(U_1\cup U_2,U_3\cup U_4)\le \mu n^2$, and
    \item $\red{e}(U_1\cup U_3, U_2\cup U_4)\le \mu n^2$. 
\end{itemize} 
\end{theorem}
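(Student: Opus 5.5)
We apply the coloured regularity lemma (Theorem~\ref{thm:ColRegLem}) to $G$ with constants $1/n\ll 1/K_0\ll 1/k_0\ll \alpha\ll\eps\ll d\ll\delta\ll\mu\ll 1/\Delta$. This gives a partition $V_0\cup V_1\cup\ldots\cup V_k$, a cleaned graph $G'$, and an $(\eps,d)$-reduced coloured graph $R$ on $[k]$. By Lemma~\ref{fact:H_min_deg}, $\delta(R)\ge (3/4-3d)k$. We then apply Lemma~\ref{lem:H_strcuture} to $R$ with parameter $\delta'$, where $d\ll\delta'\ll\delta$, and treat its four outcomes in turn. Since $|G|\ge 2n-2$, each cluster has size $m\ge (1-\eps)(2n-2)/k$. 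Hence a matching in $R$ covering $(1/2+\delta')k$ clusters corresponds to two sides, each of total size at least $(1/2+\delta'/2)(1-\eps)(n-1)\ge (1+\delta'/2)n/2$. In every case where a tree is found, it is monochromatic because the relevant component and matching of $R$ are monochromatic, so we run the embedding in $G'$ of that colour.

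In case~\ref{structure1}, a non-bipartite monochromatic component contains a large matching. We discard clusters until both matched sides have total size exactly $(1+2\delta'')n/2$, keeping connectivity and non-bipartiteness by retaining the whole component as $R$ and using only the matched subset for assignment. Lemma~\ref{lem:emb1} then embeds $T$, since $\Delta\le n^\alpha$. In case~\ref{structure2}, the component is bipartite with parts $I\cup J$ and $|I|+|J|\ge(1-5\delta')k$. If both parts are reasonably balanced and have the minimum degree required by \ref{emb3:4} (this follows from $\delta(R)\ge(3/4-3d)k$, since a vertex of $I$ has at most $|I|+5\delta' k$ non-neighbours in its own colour-component side), then Lemma~\ref{cor:balanced_bipartite_all_trees} applies. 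By Fact~\ref{fact:BddDegTreeBipartition}, the larger class of $T$ has size at most $(1-1/\Delta)n+1\le (1-5\delta)n$. If the parts are unbalanced, we use Lemma~\ref{lem:emb3:bipartite} with $I$ the larger side, checking that each $i\in I$ has at least $(1+5\delta)t_2/m$ neighbours in $J$. Here I expect a short computation: $|J|$ is around $k/2$ or smaller, but a bipartite component on nearly all of $R$ with $\delta(R)\ge 3k/4$ forces both sides to be about $k/2$ and the monochromatic degrees to be at least about $k/4$.

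In case~\ref{structure3}, there is a set $S$ of at least $(2/3-\delta')k$ clusters on which, say, red is sparse, so $R_{\text{blue}}[S]$ has minimum degree at least $(3/4-\delta'-10\delta')k-(k-|S|)\ge (5/12-O(\delta'))k> |S|/2$. Thus $R_{\text{blue}}[S]$ is connected and non-bipartite by Lemma~\ref{lem:pancyclic}, and by Lemma~\ref{lem:Dirac} (or Dirac) it has a near-perfect matching covering $(2/3-O(\delta'))k$ clusters. These give sides of size about $(2/3)(n-1)>(1+2\delta)n/2$, so Lemma~\ref{lem:emb1} embeds $T$ in blue. This case is therefore never extremal.

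Case~\ref{structure4} is the extremal one. We take $U_i=\bigcup_{j\in W_i}V_j$, where $W_1,\ldots,W_4$ are the parts of the partition of $[k]$, and put $V_0$ into $U_0$. Edges of blue between $U_1\cup U_2$ and $U_3\cup U_4$ in $G$ come only from cleaning, from $V_0$, or from irregular or sparse pairs, which amounts to $O((d+\eps)n^2)\le\mu n^2$; red is handled in the same way. The main obstacle is the size bound $|U_i|\ge(1-\mu)n/2$, since Lemma~\ref{lem:H_strcuture} only guarantees $(1/4-3\delta')k$ clusters, which gives roughly $n/2$ only up to an additive $O(\delta' n)$ error. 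This is acceptable, since $\delta'\ll\mu$ and $|G|\ge 2n-2$ imply $|U_i|\ge(1/4-3\delta')(1-\eps)(2n-2)\ge(1-\mu)n/2$. The remaining worry is whether the quadrants here are indexed consistently with the statement. We align them by relabelling: the blue cut is $\{1,2\}|\{3,4\}$ and the red cut is $\{1,3\}|\{2,4\}$, exactly as in the statement, so no colour swap is needed beyond the "after possibly swapping colours" clause.
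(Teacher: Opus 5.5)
\textbf{There is a genuine gap in case~\ref{structure2}, which is where nearly all the work in this proof lies.}

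You claim that the degree condition~\ref{emb3:4} of Lemma~\ref{cor:balanced_bipartite_all_trees}, and the analogous condition in Lemma~\ref{lem:emb3:bipartite}, follows from $\delta(R)\ge(3/4-3d)k$. Your reason is that a cluster $i\in I$ has at most $|I|+5\delta'k$ non-neighbours in $J$. That bound counts neighbours in $R$ of both colours, but the lemmas are applied to the \emph{blue} reduced graph. Nothing stops $i$ from having red edges to $J$: red edges between the two sides of a blue bipartite component are compatible with that component being bipartite.

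For example, take $|I|=|J|=k/2$, let $\blue{R}$ be a Hamilton path alternating between $I$ and $J$, and colour all other pairs red. Then $\delta(R)=k-1$ and case~\ref{structure2} holds, yet every blue degree is at most $2$, so neither lemma applies in blue. Lemma~\ref{lem:H_strcuture} only guarantees that one of its outcomes holds, so you must handle this configuration when it is the one you are given.

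Your unbalanced subcase fails for the same reason. The matching condition only forces $|J|\ge(1/4+\delta'/2)k$, so $|I|$ can be about $0.7k$ with $R[I]$ entirely red. The condition $\delta(R)\ge 3k/4$ then imposes neither balance nor large blue degrees, because red edges supply the missing degree.

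\textbf{What is missing is an argument that switches to red when the blue bipartite structure is degenerate.} The paper proceeds as follows.
\begin{enumerate}
\item Since $\blue{R}[I]$ and $\blue{R}[J]$ are empty, red is dense inside $V_I$ and $V_J$. If $|V_I|\ge(1+100\delta)n$, then $\red{G}[V_I]$ has minimum degree at least $(1/2+\delta)|V_I|$, and Theorem~\ref{thm:KSS} gives a red copy of $T$.
\item Otherwise both sides have size about $n$, and $\delta(\red{R}[I])$ and $\delta(\red{R}[J])$ are at least about $|I|/2$.
\item If there is no red edge between $I$ and $J$, the blue bipartite minimum degree is about $k/4$ and Lemma~\ref{cor:balanced_bipartite_all_trees} applies. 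This is the only situation your argument covers.
\item If there is a red $I$--$J$ edge, the paper splits on whether $\red{R}[I]$ and $\red{R}[J]$ are $2$-connected, using Lemma~\ref{lem:Dirac}. Either a large red connected matching appears, and Lemma~\ref{lem:emb1} or Lemma~\ref{cor:balanced_bipartite_all_trees} applies in red, or both sides split at cut vertices into pieces $I_1,I_2,J_1,J_2$. In the latter case the blue edges between $I_1\cup J_1$ and $I_2\cup J_2$ have minimum degree about $k/4$, and Lemma~\ref{cor:balanced_bipartite_all_trees} applies in blue.
\end{enumerate}

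Your cases~\ref{structure1}, \ref{structure3} and~\ref{structure4} match the paper's. One small slip in case~\ref{structure3}: the bound $(5/12)k>|S|/2$ fails when $|S|$ is close to $k$. The unsimplified bound $|S|-(1/4+11\delta')k>|S|/2$ is the correct one to use.
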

\begin{proof} We introduce auxiliary constants $\eps,\delta,d>0$ so that $\eps\ll d\ll \delta\ll \mu$. Use Theorem~\ref{thm:ColRegLem} with parameters $\varepsilon,d$ to get a partition $V(G)=V_0\cup V_1\cup\ldots\cup V_k$ so that \ref{item:(R1)}--\ref{item:(R5)} hold for some subgraph $G'\subset G$, and let $m=|V_1|=\ldots=|V_k|$. Note that~\ref{item:(R1)} gives $m\ge (1-\eps)|G|/k\ge (1-2\eps)2n/k$. 

Let $R$ be the graph with vertex set $[k]$ where $ij\in E(R)$ if both $\blue{G'}[V_i,V_j]$  and $\red{G'}[V_i,V_j]$ are $\eps$-regular. For distinct $i,j\in [k]$, we colour  $ij$ blue if $\blue{G'}[V_i,V_j]$ is $(\eps,d)$-regular or red if $\red{G'}[V_i,V_j]$ is $(\eps,d)$-regular, breaking ties arbitrarily. Then, Lemma~\ref{fact:H_min_deg} gives
\begin{equation}\label{eq:mindeg:reduced}\delta(R)\ge (3/4-(3d+\eps)|R|\ge (3/4-\delta)|R|,
\end{equation}
as $\eps,d\ll\delta$, and thus Lemma~\ref{lem:H_strcuture} implies one of \ref{structure1}--\ref{structure4} holds. We first show that if we are in case~\ref{structure4} of Lemma~\ref{lem:H_strcuture}, then we are done.

\begin{claim}If there is a partition $V(R)=I_1\cup I_2\cup I_3\cup I_4$ with $|I_i|\ge (1/4-3\delta)k$ for all $i\in [4]$, such that $\blue{e}(I_1\cup I_2,I_3\cup I_4)=0$ and $\red{e}(I_1\cup I_3, I_2\cup I_4)=0$, then the conclusion of Theorem~\ref{thm:stability} holds.\end{claim}
\begin{proof}[Proof of claim]
For $i\in [4]$, let $U_i=\cup_{j\in I_i}V_j$, and let $U_0=V(G)\setminus (U_1\cup U_2\cup U_3\cup U_4)$. First note that $|U_0|\le \eps |G|\le \mu n$ as $\eps\ll \mu$. Secondly, for $i\in [4]$,
\[|U_i|=m|I_i|\ge (1-2\eps)\frac{2n}{k}\cdot (1/4-3\delta)k\ge (1-\mu)n/2,\]
as $\eps,\delta\ll\mu$. Lastly, by~\ref{item:(R3)} and~\ref{item:(R5)} we have 
    \[\blue{e}(U_1\cup U_2,U_3\cup U_4)\le |U_1\cup U_2|\cdot (2d+\eps)|G|\le \mu n^2,\]
    as $\eps,d\ll\mu$. Similar arguments give $\red{e}(U_1\cup U_3, U_2\cup U_4)\le |U_1\cup U_3|\cdot (2d+\eps)|G|\le \mu n^2.$
\end{proof}
The aim now will be to prove that in cases \ref{structure1},~\ref{structure2}, and~\ref{structure3} we can find a monochromatic copy of $T$. We start with cases~\ref{structure1} and~\ref{structure3} as case~\ref{structure2} in considerable more involed.\\

\noindent \textbf{Case (i):} There is a non-bipartite component $C$ of $\red{R}$ or $\blue{R}$ which contains a matching with $(1/2+\delta)k$ vertices.  \\

Without loss of generality, let us assume that $C$  is a red component. Let $I,J\subset [k]$ be the red matching in $C$ and note that 
\[|\cup_{i\in I}V_i|=|\cup_{j\in J}V_j|=(1/2+\delta)k/2\ge (1/2+\delta)(1-2\eps)n\ge (1+\delta/2)n/2,\]
provided $\eps\ll\delta$. Then, Lemma~\ref{lem:emb1} implies $\red{G}$ contains a copy of $T$.\\

\noindent\textbf{Case (iii):} There is a set $S$ with $|S|\ge (2/3-\delta)k$ such that either $\Delta(\red{R}[S])\le 10\delta k$ or $\Delta(\blue{R}[S])\le 10\delta k$.\\

Without loss of generality we assume that $\Delta(\red{R}[S])\le 10\delta k$. Using that $\eps\ll\delta\ll 1$, observe first that 
\[|\cup_{i\in S}V_i|\ge (2/3-\delta)km\ge (2/3-\delta)(1-2\eps)2n\ge (1+10\delta)n.\]
Now,~\eqref{eq:mindeg:reduced} implies 
\[\delta(\blue{R}[S])\ge (3/4-\delta)k-10\delta k-(k-|S|)=|S|-(1/4-11\delta)k>|S|/2,\]
and thus $\blue{R}[S]$ is non-bipartite and has a perfect matching. Then, as $|\cup_{i\in S}V_i|\ge (1+10\delta)n$ and $\eps\ll\delta$, $\blue{G}[S]$ contains a copy of $T$ by Lemma~\ref{lem:emb1}.\\

\noindent \textbf{Case (ii):}  There is a bipartite component $C$ of $\red{R}$ or $\blue{R}$ such that $|C| \ge (1-5\delta)k$ and $C$ has a matching with $(1/2 + \delta)k$ vertices. \\

Without loss of generality let us assume that $C$  is a blue component, and let $I\cup J$ be the bipartition classes of $C$ so that $|I|\ge |J|$. Let $V_I=\cup_{i\in I}V_i$ and $V_J=\cup_{i\in J}V_i$ and note that 
\begin{equation}
    |V_I\cup V_J| \ge (1-5\delta)km\ge (1-5\delta)(1-\eps)2n\ge(1-6\delta)|G|.
\end{equation}
\begin{claim}If $|V_I|\ge (1+100\delta)n$, then $\red{G}$ contains a copy of $T$.
\end{claim}
\begin{proof}[Proof of Claim]
As $C$ is bipartite, then $\blue{R}[I]$ and $\blue{R}[J]$ are empty, and so,  because of~\ref{item:(R1)}, we have $\Delta(\blue{G}[V_I])\le (2d+\eps)2n$ and $\Delta(\blue{G}[V_J])\le (2d+\eps)2n$. Given any vertex $x\in V_I$, we have 
\[\red{d}(x)\ge \lfloor3|G|/4\rfloor-\Delta(\blue{G}[V_I])-|V_J|-6\delta n\ge \frac{3}{4}|V_I|-\frac{|V_J|}{4}-7\delta n\ge (1/2+\delta)|V_I|,\]
where we used that $|V_J|\le 2n-|V_I|\le (1-100\delta)n$ and $\eps,d\ll\delta$. Then, as $|V_I|\ge (1+\delta)n$, Theorem~\ref{thm:KSS} implies $\red{G}$ contains a copy of $T$. 
\end{proof}
 We assume from now on that $|V_I|\le (1+100\delta)n$ from which it follows that $|V_I|\ge |V_J|\ge (1-110\delta)n$ and $|I|\le (1+110\delta )k/2$. First, we have
 \begin{equation}\label{eq:min_Deg_H[X]}
 \delta(\red{R}[I])\overset{\eqref{eq:mindeg:reduced}}{\ge} (3/4-\delta)k-(k-|I|)=|I|-(1/4-\delta)k\ge (1-200\delta)|I|/2,
 \end{equation}
and similarly for $J$ we have $\delta(\red{R}[J])\ge (1-200\delta)|J|/2$.

\begin{claim}\label{fact:structure}For $K\in \{I,J\}$, the following holds.
    \begin{enumerate} [label=\upshape{(\roman{enumi})}]
        \item If $\red{R}[K]$ is 2-connected, then $\red{R}[K]$ contains a matching on at least $(1-  400\delta)k/2$ vertices. \label{item:X2-conn}
       
        \item If $\red{R}[K]$ is not 2-connected, then there is some $i \in K$ such that $\red{R}[K\setminus\{i\}]$ has exactly two components $K_1$ and $K_2$, such that, for $i\in[2]$, $K_i$ is non-bipartite, contains a matching on at least $(1-20\delta)k/4$ vertices, and $|K_i| \le (1+200\delta)k/4$. \label{item:X1-conn}
    \end{enumerate}
\end{claim}
\begin{proof}[Proof of claim]
Let us assume without loss of generality that $K=I$, as the case $K=J$ is analogous. Note that if $K$ is 2-connected, then \eqref{eq:min_Deg_H[X]} and Lemma~\ref{lem:Dirac} together imply that $K$ has a matching covering at least $(1-200\delta)|I|\ge (1-400\delta)k/2$ vertices. If $K$ is not 2-connected, then there is some $i \in K$ such that $\red{R}[K\setminus\{i\}]$ has at least two components, say $K_1$ and $K_2$. Firstly, noting that \eqref{eq:min_Deg_H[X]} gives that $\red{\delta}[K_i] \ge (1-200\delta)|I|/2$ for $i\in[2]$, we conclude $\red{R}[K\setminus\{i\}]$ has exactly two components. Secondly, for $i\in [2]$ we have
\[|K_i|\le |I|-(1-200\delta)|I|/2\le (1+100\delta)|I|/2,\]
hence $K_i$ is non-bipartite and has a matching on at least $|K_i|-1\ge (1-20\delta)k/4$ vertices. 
\end{proof}
We now claim that there is at least one red edge between $I$ and $J$. Indeed, otherwise we have
\begin{align*}
    \delta(\blue{R}[I,J]) &\ge (3/4 - \delta)k - \max\{|I|, |J|\} - (k-|I\cup J|) \\
    & \ge (3/4 - \delta)k - (1+100\delta)k/2 - 5 \delta k\\ 
    &\ge (1/4 - 200\delta)k,
\end{align*} 
and so we are done by Lemma~\ref{cor:balanced_bipartite_all_trees}. 

Therefore, we may assume that there is at least one red edge $ij$ with $i\in I$ and $j\in J$. If at least one of $\red{R}[I]$ or $\red{R}[J]$ is 2-connected, then Claim~\ref{fact:structure} implies $\red{R}[I\cup J]$ contains a connected matching covering at least 
\[(1-400\delta)k/2+(1-20\delta)k/4\ge (1+100\delta)k/2\]
vertices. Say $K$ is this component. If $K$ is non-bipartite, then we are done by Lemma~\ref{lem:emb1}. Otherwise, $K$ is bipartite, in which case both $\red{R}[I]$ and $\red{R}[J]$ are $2$-connected and thus $V(K)=I\cup J$. Let $I'$ and $J'$ be the partition classes of $K$, and assume $|I'|\ge |J'|$. By \eqref{eq:min_Deg_H[X]}, we have  
\[|I'| \ge |J'| \ge (1-400\delta)k/2\quad\text{ and }\quad\delta(\red{R}[K])\ge (1-400\delta)k/2,\]
so we can apply Lemma~\ref{cor:balanced_bipartite_all_trees} to find a copy of $T$. 

We now suppose both $\red{R}[I]$ and $\red{R}[J]$ are not 2-connected, in which case we can partition $I = I_1 \cup I_2 \cup \{\Bar{i}\}$ and $J = J_1 \cup J_2 \cup \{\Bar{j}\}$ as in Claim~\ref{fact:structure}~\ref{item:X1-conn}.  If three of $I_1, I_2, J_1, J_2$ are connected in $\red{R}$, then these form a sufficiently large monochromatic non-bipartite red component so that we can conclude by Lemma~\ref{lem:emb1}.

Otherwise, each of $I_1, I_2, J_1, J_2$ is connected to at most one other in $\red{R}$. Recall we are assuming there is a red edge $ij$ with $i\in I$ and $j\in J$. Noting that $i\not=\Bar{i}$ and $j\not=\Bar{j}$, as otherwise we are in the former case, we must have a red edge between $I_1$ and  $J_1$, say. Then,  using \eqref{eq:mindeg:reduced} and $|I_1 \cup J_1|\le (1+200\delta)k/2$, we have that for $i' \in |I_1 \cup J_1|$, 
\begin{equation*}
    |N_R(i')\cap (I_2 \cup J_2)| \ge (3/4 - \delta)k - (1+200\delta)k/2\ge  (1-300\delta)k/4.
\end{equation*}
As we have assumed there are no red edges between $I_1 \cup J_1$ and $I_2 \cup J_2$, all of these edges must be blue. By the same argument, each vertex of $I_2 \cup J_2|$ has at least $(1-300\delta)k/4$ blue neighbours in $I_1 \cup J_1$. This gives $\delta(\blue{R}[I_1 \cup J_1, I_2 \cup J_2]) \ge (1-300\delta)k/4$. Hence, $\delta(\blue{R} -\{\Bar{i}, \Bar{j}\}) \ge (1-400\delta)k/4$, and, since $\blue{R}[I\cup J]$ is connected, we conclude by Lemma~\ref{cor:balanced_bipartite_all_trees}. \end{proof}

For trees with unbounded maximum degree, we cannot surpass case~\ref{structure3} as in the proof of Theorem~\ref{thm:stability}, and thus our stability result needs to deal with this situation.

\begin{theorem}\label{thm:stability:unbounded}Let $1/n\ll\alpha\ll\mu\ll \delta\le1$. Let $G$ be a blue/red coloured graph with $2n-2$ vertices and $\delta(G)\ge  (3/4+\delta)2n$ and suppose $T$ is an $n$-vertex tree with $\Delta(T)\le n^\alpha$. If $G$ contains no monochromatic copy of $T$, then, after possibly swapping colours, there is a partition $V(G)=U_0\cup U_1\cup U_2$ such that 
\begin{itemize}
    \item $|U_i|\ge (1-\mu)n$ for $i\in [2]$, 
  \item $\delta(\red{G}[U_i])\ge n/2+\delta n/100$ for $i\in [2]$, and
  \item $\red{e}(U_1,U_2)\le \mu n^2$.
\end{itemize}
\end{theorem}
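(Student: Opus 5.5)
We follow the proof of Theorem~\ref{thm:stability} almost verbatim, with the trees now allowed degree up to $n^\alpha$. Every embedding lemma of Section~\ref{sec:regularity} (Lemmas~\ref{lem:emb1}, \ref{lem:emb3:bipartite}, \ref{cor:balanced_bipartite_all_trees}) was proved for trees with $\Delta(T)\le n^{\alpha}$, so they apply unchanged. The minimum degree hypothesis should be read as $\delta(G)\ge (3/4+\delta)|G|$, i.e.\ roughly $(3/2+2\delta)n$.

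We introduce constants $\alpha\ll 1/K_0\ll1/k_0\ll\eps\ll d\ll\mu\ll\delta$ and apply Theorem~\ref{thm:ColRegLem} to obtain the coloured reduced graph $R$ on $[k]$ with clusters of size $m\approx 2n/k$. By Lemma~\ref{fact:H_min_deg}, $\delta(R)\ge (3/4+\delta/2)k$. We then apply Lemma~\ref{lem:H_strcuture} with a parameter $\delta'$ satisfying $\mu\ll\delta'$; its hypothesis holds trivially.

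Case~\ref{structure4} is impossible. Take $i\in I_1$: it has no blue neighbours in $I_3\cup I_4$ and no red neighbours in $I_2\cup I_4$, so it has no neighbours at all in $I_4$. Hence $d_R(i)\le k-|I_4|\le(3/4+3\delta')k<\delta(R)$, a contradiction.

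Cases~\ref{structure1} and~\ref{structure2} are handled exactly as in the proof of Theorem~\ref{thm:stability}. In case~\ref{structure1}, Lemma~\ref{lem:emb1} gives a monochromatic copy of $T$. In case~\ref{structure2}, the same sequence of claims applies. There is a shortcut via Theorem~\ref{thm:KSS} when $|V_I|$ is large. Otherwise we get the dichotomy of 2-connectivity of $\red{R}[I]$ and $\red{R}[J]$. Then we either find a large red non-bipartite connected matching (Lemma~\ref{lem:emb1}), or a near-balanced bipartite component with linear minimum degree (Lemma~\ref{cor:balanced_bipartite_all_trees}). One also needs that the bipartition classes of $T$ have size at most $(1-5\delta)n$. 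If instead one class exceeds $(1-5\delta)n$, then $T$ has a vertex of degree at least $n^{1-o(1)}$, contradicting $\Delta(T)\le n^\alpha$. With the extra $\delta$ in the minimum degree all of these inequalities only improve, so I expect no new difficulty here. The surplus degree should even make some subcases vacuous.

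So the real work is case~\ref{structure3}, which is exactly where the bounded degree argument used Lemma~\ref{lem:emb1} with a perfect matching of $\blue{R}[S]$ and no longer gives the desired structure directly. Say $|S|\ge(2/3-\delta')k$ and $\Delta(\red{R}[S])\le 10\delta'k$. Then $\delta(\blue{R}[S])\ge |S|-(1/4-\delta/2+10\delta')k>|S|/2$, so $\blue{R}[S]$ is connected, non-bipartite and has a near-perfect matching. Since $|\bigcup_{i\in S}V_i|\ge(2/3-2\delta')2n>(1+\delta)n$, Lemma~\ref{lem:emb1} yields a blue copy of $T$. Thus case~\ref{structure3} also leads to a contradiction, and the remaining alternatives seem to give $T$ in every case.

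This would mean the conclusion holds vacuously, which is suspicious. The main obstacle I anticipate is identifying where the argument breaks for trees of degree $n^\alpha$. The likely culprit is the step using Theorem~\ref{thm:KSS} in case~\ref{structure2}. If $|V_I|\ge(1+100\delta')n$ but $\red{G}[V_I]$ falls short of the KSS degree threshold relative to $n$, we cannot conclude. Instead, we should extract the structure: the red graph is dense inside $V_I$ and inside $V_J$, each of size about $n$, with few red edges between them. Similarly, wherever Lemma~\ref{cor:balanced_bipartite_all_trees} requires balanced bipartition classes that $T$ may fail to have, we fall back on this two-clique structure. I would set $U_1=V_I$ and $U_2=V_J$ (after clean-up), and let $U_0$ collect $V_0$ together with the vertices of low red degree. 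The degree bound $\delta(\red{G}[U_i])\ge n/2+\delta n/100$ comes from $\delta(G)\ge(3/2+2\delta)n$ minus the few blue edges inside the $U_i$ and minus $|U_{3-i}|\le n$. The bound $\red{e}(U_1,U_2)\le\mu n^2$ holds because any dense red pair between $I$ and $J$ would create a large connected red matching, handled by Lemma~\ref{lem:emb1}. Moving atypical vertices into $U_0$ by a standard count keeps $|U_0|\le\mu n$.
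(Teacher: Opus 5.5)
\textbf{The gap.} The proposal breaks in case~\ref{structure2}, at the sentence claiming that a bipartition class larger than $(1-5\delta)n$ forces a vertex of degree $n^{1-o(1)}$. That claim is false. If the smaller class has size $t_2<5\delta n$, counting the $n-1$ edges only gives $\Delta(T)\ge (n-1)/t_2\approx 1/(5\delta)$, which is a constant. The correct bound (Fact~\ref{fact:BddDegTreeBipartition}) is $t_2\ge (n-1)/\Delta(T)$, so with $\Delta(T)\le n^{\alpha}$ the smaller class can have size about $n^{1-\alpha}$. For example, string $n^{1-\alpha}$ centres along a path through degree-$2$ connectors and give each centre about $n^{\alpha}$ leaves.

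For such trees, Lemmas~\ref{lem:emb3:bipartite} and~\ref{cor:balanced_bipartite_all_trees} do not apply. Consider the subcase of case~\ref{structure2} in which $\red{R}$ has no edge between $I$ and $J$. Then the only candidate host is the blue bipartite graph between $V_I$ and $V_J$, whose larger side has at most $n-1$ vertices. Regularity loses $\varepsilon n$ vertices and needs room $(1+5\delta)t_1$ on the $t_1$-side, so it cannot embed a tree with $t_1=n-o(n)$ there. That subcase is exactly the configuration the theorem must output, which is why your argument ``proved too much''.

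Your diagnosis of the culprit is also off: Theorem~\ref{thm:KSS} is never the problem. If $|V_I|\ge n$, every $x\in V_I$ has at least $|V_I|-(1/2-2\delta)n-o(n)\ge (1/2+\delta/2)|V_I|$ red neighbours in $V_I$. The statement is in fact vacuous a posteriori by Theorem~\ref{thm:main2}, but proving that requires the absorption argument of Lemma~\ref{thm:KSS:bipartite}, not regularity.

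\textbf{The fix.} Your fallback paragraph is the right argument and is essentially the paper's proof. It should replace the appeal to the proof of Theorem~\ref{thm:stability}, not supplement it. In case~\ref{structure2} the steps are as follows.
\begin{enumerate}
\item Theorem~\ref{thm:KSS} gives $|V_J|\le |V_I|\le n-1$, and hence $|V_I|,|V_J|\ge (1-O(\delta'))n$.
\item Since the blue component is bipartite, every vertex of $V_K$ has at most $(2d+\varepsilon)|G|$ blue neighbours in $V_K$. This gives $\delta(\red{G}[V_K])\ge n/2+\delta n/100$ for $K\in\{I,J\}$.
\item The surplus $\delta$ gives $\delta(\red{R}[I])>|I|/2$ and $\delta(\red{R}[J])>|J|/2$. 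So a single red edge of $R$ between $I$ and $J$ yields a non-bipartite red component with a matching on $(1-O(\delta'))k$ clusters, and Lemma~\ref{lem:emb1} applies.
\item Otherwise set $U_1=V_I$, $U_2=V_J$, and let $U_0$ be the rest.
\end{enumerate}
No 2-connectivity dichotomy is needed.

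\textbf{Constants.} Your hierarchy must have $\delta'\ll\mu$ (and $\delta'\ll\delta$), not $\mu\ll\delta'$. The conclusion $|U_1|,|U_2|\ge (1-\mu)n$ forces $|U_0|\le 2\mu n$. But the clusters outside $C$ can hold about $2\delta' n$ vertices, and moving atypical vertices does not address this.

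Your treatment of cases~\ref{structure1}, \ref{structure3} and~\ref{structure4} is fine. Your argument for~\ref{structure4} is a cleaner form of the paper's edge count.
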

\begin{proof}
 We introduce auxiliary constants $\eps,d>0$ so that $\alpha\ll \eps\ll d\ll \mu\ll \delta$. Let $G$ be a blue/red coloured graph on $2n-1$ vertices with $\delta(G)\ge (3/4+\delta)2n$ and let $T$ be an $n$-vertex tree with $\Delta(T)\le n^\alpha$. Use Theorem~\ref{thm:ColRegLem} with parameters $\varepsilon,d$ to get a partition $V(G)=V_0\cup V_1\cup\ldots\cup V_k$ so that \ref{item:(R1)}--\ref{item:(R5)} hold for some subgraph $G'\subset G$. Let $m=|V_1|=\ldots=|V_k|$ and note that~\ref{item:(R1)} gives $m\ge (1-\eps)|G|/k\ge (1-2\eps)2n/k$, and \ref{item:(R3)} gives $\delta(G')\ge (3/4+\delta)2n-(2d+\eps)|G|\ge (3/4+\delta/2)2n$, as $\eps,d\ll \delta$.

Let $R$ be the graph with vertex set $[k]$ where $ij\in E(R)$ if both $\blue{G'}[V_i,V_j]$  and $\red{G'}[V_i,V_j]$ are $\eps$-regular. For distinct $i,j\in [k]$, we colour  $ij$ blue if $\blue{G'}[V_i,V_j]$ is $(\eps,d)$-regular or red if $\red{G'}[V_i,V_j]$ is $(\eps,d)$-regular, breaking ties arbitrarily. Then, Lemma~\ref{fact:H_min_deg} gives
\begin{equation}\label{eq:mindeg:reduced2}\delta(R)\ge (3/4+\delta/2-(2d+2\eps))|R|\ge (3/4+\delta/4)|R|,
\end{equation}
as $\eps,d\ll\delta$, and thus Lemma~\ref{lem:H_strcuture} implies one of \ref{structure1}--\ref{structure4} holds. As in the proof of Theorem~\ref{thm:stability}, cases~\ref{structure1} and~\ref{structure3} gives a monochromatic copy of $T$, so let us focus here then on cases~\ref{structure2} and~case~\ref{structure4}.

We introduce an auxiliary constant $\delta'$ that satisfies $\eps,d\ll\delta'\ll\delta,\mu$. We quickly discard case~\ref{structure4}. Assume we are in case~\ref{structure4} so that there is a partition $V(R)=U_1\cup U_2\cup U_3\cup U_4$ such that (after possibly swapping colours) we have
\begin{itemize}
    \item $|U_i|\ge (1-\delta')k/4$ for $i\in [4]$,
    \item $\blue{e}(U_1\cup U_2, U_3\cup U_4)=0$, and
    \item $\red{e}(U_1\cup U_3, U_2\cup U_4)=0$.
\end{itemize}
Note that the first bullet point implies that $|U_i|\le k/4+\delta'k$ for $i\in [4]$. Then, \eqref{eq:mindeg:reduced2} gives 
\[e(U_1,U_4)\ge |U_1|\cdot \left((3/4+\delta/4)|R|-|U_1|-|U_2|-|U_3|\right)\ge |U_1|\cdot (\delta k/4-3\delta'k)\ge \delta k^2/100,\]
which contradicts either the second or third bullet point. 

Now suppose we are in case~\ref{structure2} so that there is a monochromatic bipartite component $C$ of $R$ such that $|C|\ge (1-\delta')k$ and $C$ contains a matching covering $(1/2+\delta')k$ clusters. Without loss of generality, we assume $C$ is a blue component, and let $I,J$ denote the bipartition classes of $C$ with $|I|\ge |J|$. 

Let $V_I=\cup_{i\in I}V_i$ and $V_J=\cup_{j\in J}V_j$, and note that $|V_I|\ge |V_J|$  as $|I|\ge |J|$. We claim $|V_I|\le n-1$. Indeed, note that
\begin{equation}\label{eq:mindeg:2}
\delta(\red{G}[V_I])\ge (3/4+\delta)|G|-\Delta(\blue{G}[V_I])-|V_J|-6\delta' n\ge (1/2+\delta/2)|V_I|,
\end{equation}
where we used that $\Delta(\blue{G}[V_I])\le (2d+\eps)|G|\le \delta n/100$ and $\delta'\ll \delta$. Hence, if $|V_I|\ge n$, Theorem~\ref{thm:KSS} implies $\red{G}[V_I]$ contains a copy of $T$. Observe that a similar calculation shows that \eqref{eq:mindeg:2} also holds for $\red{G}[V_J]$.

We may then assume $|V_J|\le |V_I|\le n-1$ from which we get $|V_I|\ge 2n-2-\eps |G|-3\delta'n-n\ge (1-4\delta')n$. Similarly, we get $|V_J|\ge (1-4\delta')n$ and thus the same calculation as in~\eqref{eq:mindeg:2} gives, for $K\in\{I,J\}$,
\begin{equation}\label{eq:mindeg:3}\delta(\red{G}[V_K])\ge (1/2+\delta/2)(1-4\delta')n\ge n/2+\delta n/100,\end{equation}
as $\delta'\ll \delta$. Set $U_1=V_I$ and $U_2=V_J$, and note that $|U_i|\ge (1-4\delta')n\ge (1-\mu)n$ as $\delta'\ll\mu$. Since~\eqref{eq:mindeg:3} holds for both $V_I$ and $V_J$, it is only left to show that $\red{e}(U_1,U_2)=\red{e}(V_I,V_J)\le \mu n^2$. To see this, we will show that there is no red edge between $I$ and $J$ in $R$. Indeed, if there is a red edge between $I$ and $J$, then $I\cup J$ induces a non-bipartite red component which contains a matching covering at least $(|I|-1)+(|J|-1)\ge (1-10\delta')k$ clusters, which is enough for applying Lemma~\ref{lem:emb1}.

\end{proof}
\section{Embedding trees in very dense graphs}\label{sec:extremal}
In this section, we prove several embedding results that we will need to analyse the extremal situations in Section~\ref{sec:final}. The first result finds a spanning tree in a graph which is almost a clique, and the second result is some sort of bipartite version of Theorem~\ref{thm:KSS}, which may be of independent interest. The last results are a bit different; these results find a large tree in a graph which is the union of an almost complete bipartite graph with either another almost complete bipartite graph or a relatively dense graph. 
The first two of these results use an \textit{absorption} approach inspired by the work of B\"otcher, Montgomery, Parzyck and Person~\cite{bottcher2020embedding} and of Kathapurkar and Montgomery~\cite{kathapurkar2022spanning}. The basic idea in the proof is to embed a small proportion of the tree randomly so that it gives enough flexibility to incorporate the last bit of the tree by locally \textit{switching} the role of a vertex which is currently embedded so that a new vertex can be added at the time. 
\begin{lemma}\label{lem:exact:1} Let $\Delta\ge 2$ and $1/n\ll \mu\ll \beta,1/\Delta$. Let $G$ be an $n$-vertex graph so that $\delta(G)\ge \beta n$ and for all but at most $\mu n$ vertices $x\in V(G)$, $d(x)\ge (1-\mu)n$. Then, $G$ contains a copy of every $n$-vertex tree $T$ with $\Delta(T)\le \Delta$. 
\end{lemma}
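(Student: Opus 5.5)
Let $B\subseteq V(G)$ be the set of at most $\mu n$ vertices of degree less than $(1-\mu)n$, and call the remaining vertices \emph{good}. Every good vertex is adjacent to all but at most $\mu n$ vertices. The plan has three stages. First, embed a tree structure that covers $B$. Second, embed most of $T$ greedily. Third, finish with an absorption/switching step in which the last few vertices are placed by relocating leaves.

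\textbf{Step 1: a reservoir of leaves, and a matching onto $B$.} Since $\Delta(T)\le\Delta$, either $T$ has at least $n/(4\Delta)$ leaves, or it has many vertex-disjoint bare paths. In either case we can find a family $\mathcal L$ of $\Theta(n/\Delta^2)$ leaves (or degree-2 vertices on bare paths) whose parents (respectively, path neighbours) are pairwise far apart. I would handle the leaf case, where $|\mathcal L|\ge \eta n$ with $\mu\ll\eta\ll 1/\Delta$; the bare-path case is analogous, using paths of length 2 and the common neighbourhoods of good vertices. Set $T'=T-\mathcal L$. The vertices of $B$ must receive tree vertices whose tree-degree is at most the number of good neighbours they have. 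Since $\delta(G)\ge\beta n$ and $|B|\le\mu n\ll\beta n$, each $b\in B$ has at least $\beta n/2$ good neighbours. I would assign $B$ to some vertices of $\mathcal L$: choose $|B|$ leaves in $\mathcal L$ with distinct parents, far apart. These will be embedded last, together with the absorption.

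\textbf{Step 2: embed $T'$ greedily into good vertices.} Embed $T'$ vertex by vertex in BFS order into $V(G)\setminus B$, choosing each image uniformly at random among the available good neighbours of its parent's image. This is possible because every good vertex has at least $(1-2\mu)n$ good neighbours, while $|T'|\le n-\eta n$. Randomness, controlled by Azuma's inequality (Lemma~\ref{lem:azuma}) or McDiarmid's inequality (Lemma~\ref{lemma:mcdiarmid}), ensures two properties. First, for every $b\in B$, at least $\beta\eta n/4$ parents of leaves in $\mathcal L$ are mapped into $N(b)$. Second, for every pair of good vertices, the common neighbourhood contains many images of parents. The first property needs care: $N(b)$ has size only $\beta n$, so we need the random images of the parent set $P$ (with $|P|\ge\eta n/\Delta$) to hit $N(b)$ proportionally. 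Randomising each choice among roughly $(1-\mu)n$ options gives each parent probability about $\beta$ of landing in $N(b)$, so concentration applies.

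\textbf{Step 3: finish by matching leaves.} The remaining leaves $\mathcal L$ must be assigned to the uncovered set $W$, where $|W|=|\mathcal L|$ and $B\subseteq W$. This is a bipartite matching problem between the parents (counted with multiplicity) and $W$, which we solve via Hall's condition. Good vertices in $W$ are adjacent to all but $\mu n$ of the parent images, which takes care of all large sets. Each $b\in B$ sees at least $\beta\eta n/4\gg|B|$ parent images, so we can greedily match the vertices of $B$ first. After that, every remaining parent image $p$ is good and is adjacent to all but $\mu n$ of the remaining $W$. Hall's condition then fails only if some small set of parents sees few vertices of $W$, and this is impossible since $|W|\ge\eta n\gg\mu n$.

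The main obstacle is the bare-path case, where $T$ has few leaves. There the analogous step is to embed the middle vertices $x$ of bare paths $uxw$, which requires $W$-vertices in the common neighbourhood of the images of $u$ and $w$. This is again fine for good vertices, whose common neighbourhoods have size at least $(1-2\mu)n$. For $b\in B$, however, we need many pairs $(u,w)$ with both images in $N(b)$, which occurs with probability about $\beta^2$. Azuma's inequality again gives about $\beta^2\eta n$ such pairs, which is enough. I expect this concentration-and-Hall step for the vertices of $B$ to be the delicate part of the proof.
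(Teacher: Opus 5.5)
\textbf{Comparison with the paper.} Your route differs from the paper's. The paper uses neither the leaves-versus-bare-paths dichotomy nor Hall's theorem. It splits off a subtree $S$ with $\gamma n\le |S|\le 2\gamma n$ ($\gamma\ll\beta$) and embeds $S$ randomly into the good vertices \emph{first}. This guarantees that for every pair $x,y$ there are at least $\lambda^2 n$ vertices $s_i\in V(S)$ with $\psi(s_i)\in N_G(x)$ and all images of tree-neighbours of $s_i$ lying in $N_G(y)$. The paper then embeds all but the last $2\mu n$ tree vertices greedily into good vertices. Each leftover vertex $x_i$, bad or not, is absorbed by a switch: $x_i$ takes over the role of some $s_{\ell_i}$, and the next tree vertex receives the old image of $s_{\ell_i}$.

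What this buys is that no case analysis is needed. Also, all randomness happens while almost all of $V(G)$ is still free, so each relevant event has probability at least $(\beta/2)^{\Delta+1}$ by a one-line bound. Your reservoir vertices have only one or two tree-neighbours, so your route would give a polynomial rather than exponential dependence of $\mu$ on $\beta$ and $\Delta$. The price is the case split and the Hall step, which is fine as you set it up.

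\textbf{The gap in Step 2.} You randomise the embedding of all of $T'$, which eventually covers all but about $\eta n$ good vertices. You then justify the bound for $b\in B$ by saying each parent is placed among roughly $(1-\mu)n$ options. That is false for parents embedded late. For example, in a broom the leaves of $\mathcal L$ come last in BFS order. Each parent is then chosen from a pool $A_t$ of size only $\Theta(\eta n)$, and it lands in $N(b)$ with probability about $|N(b)\cap A_t|/|A_t|$. A priori this is not close to $\beta$: $N(b)$ has only about $\beta n$ vertices and may already be largely used up.

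So an extra argument is needed. Two ways to supply it:
\begin{itemize}
\item \emph{Track the pool.} Let $r_t=|N(b)\cap A_t|/|A_t|$. Each choice is uniform over $A_t$ minus at most $\mu n$ vertices, so the drift of $r_t$ per step is at least $-O(\mu n/|A_t|^2)$. Since $|A_t|\ge(\eta-\mu)n$ throughout, this sums to $O(\mu/\eta)$. The martingale increments are at most $1/|A_t|$, with square-sum $O(1/(\eta n))$. Hence with high probability $r_t\ge\beta/2$ for all $t$, each parent lands in $N(b)$ with conditional probability at least $\beta/3$, and Azuma applies as you intend.
\item \emph{Front-load the randomness, as the paper does.} Take $\mathcal L$ to be leaves of one piece of a $\gamma$-decomposition with $\gamma\ll\beta$; some piece contains $\Omega(\gamma^2 n/\Delta)$ leaves of $T$. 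Root $T$ in that piece and embed it first. Then every parent of $\mathcal L$ is placed while almost all of $N(b)$ is still free.
\end{itemize}
The same issue affects your $\beta^2$ count in the bare-path case. There you must also note that $T'$ is a forest, so each new component has to be started at a fresh random vertex.
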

\begin{proof}Let us introduce first auxiliary constants $\gamma,\lambda$ such that $\mu\ll \lambda\ll\gamma\ll \beta,1/\Delta$. We use Lemma~\ref{lem:cutvertex:2} to find a vertex $s\in V(T)$ and subtrees $S,S'\subset T$ such that  $\gamma n\le |S|\le 2\gamma n$, $S\cup S'=T$ and $S$ and $S'$ intersect exactly at $s$. Label the vertices of $S$ as $s_1,\ldots, s_m$ so that $s_1=s$ and, for $i\in [m]$, $s_i$ has exactly one neighbour in $\{s_1,\ldots, s_{i-1}\}$. Moreover,  assume that for a set $J\subset \{4,\ldots, m\}$ of size $|J|=\lambda n$ we have that for each $j\in J$, $s_{j-1},s_j,s_{j+1}$ is a path in $T$, all neighbours of $s_j$ (except for $s_{j-1}$) appear right after $s_j$ in this ordering, and all vertices $\{s_j:j\in J\}$ are at distance at least $2$ in $T$. This is indeed possible as $\Delta(T)\le \Delta$ and $\lambda \ll \gamma,1/\Delta$. Our first step is to find an embedding $\psi:S\to G$ so that the following property holds.
\begin{enumerate}[label=\upshape{\textbf{P}}]
    \item\label{property:switching:1} For all distinct $x,y\in V(G)$, $|\{i\in [m]:\psi(s_i)\in N_G(x)\quad\text{and}\quad N_{\psi(S)}(\psi(s_i))\subset N_G(y)\}|\ge \lambda^2 n.$
\end{enumerate}
Let $X\subset V(G)$ be the set of vertices with at least $(1-\mu)n$ neighbours in $G$. Consider the following random process to find an embedding $\psi:S\to G$. First, select $v_1=\psi(s_1)$ uniformly at random from $X$. For $i\in [m]$, assume we have embedded $s_1,\ldots, s_{i-1}$ and, for $j\in [i-1]$, let $v_j=\psi(s_j)$ be the image of $s_j$ under this embedding. At step $i$, pick $v_i=\psi(s_i)$ uniformly at random from  $(N_G(v_j)\cap X)\setminus\{v_1,\ldots, v_{i-1}\}$, where $v_j=\psi(s_j)$ is the image of sole neighbour of $s_i$ in $\{s_1,\ldots, s_{i-1}\}$. Note that as $|S|\le 2\gamma n\le \delta (G[X])$, this process always succeeds in finding an embedding of $S$.
\begin{claim}$\psi:S\to G$ satisfies \ref{property:switching:1} with positive probability. \end{claim}

\begin{proof}Fix distinct vertices $x,y\in V(G)$. Let $J=\{j_1,\ldots, j_{\lambda n}\}$ and, for $i\in [\lambda n]$, let $X_i$ be the random variable that takes value 1 whenever $\psi(s_{j_i})\in N_G(x)$ and $N_{\psi(S)}(\psi(s_{j_i}))\subset N_G(y)$, and 0 otherwise. Let $i\in [\lambda n]$ and suppose that we have embedded $s_1,\ldots, s_{j_i-1}$. Then, the probability that $\psi(s_{j_i})\in N_G(x)$ and all the subsequent neighbours of $s_{j_i}$ belong to $N_G(y)$ is at least 
\[\frac{|(N_G(\psi({s_{j_i-1}}))\cap N_G(x)\cap X)\setminus \{\psi(s_1),\ldots,\psi(s_{{j_i}-1})\}|}{|N_G(\psi(s_{j_i-1}))\cap X|}\cdot \left(\frac{\beta n-\mu n-|S|}{n}\right)^{\Delta-1}\ge (\beta/2)^{\Delta},\]
where we used that $|V(G)\setminus X|\le \mu n$ and $\mu\ll\beta.$ Then, by conditioning on any possible outcome of $\psi(s_1),\ldots, \psi(s_{j_i-2})$ and that $s_{j_i-1}$ is embedded to $N_G(y)$, we have that
\[\mathbb P(X_i=1)\ge (\beta/2)^{\Delta+1}.\]
Set $\alpha=(\beta/2)^{\Delta+1}$, $Z_0=0$ and, for $j\in [\lambda n]$, let $Z_i=\sum_{j\in[i]}(X_j-\alpha)$. First, note that $(Z_j)_{j\ge 0}$ is a submartingale. Indeed, for all $i\ge 0$ we have 
\[\mathbb E[Z_{i+1}|Z_i,\ldots, Z_0]=Z_i+\mathbb E[(X_{i+1}-\alpha)|Z_i,\ldots, Z_0]\ge Z_i.\]
Noting that $|Z_{i}-Z_{i-1}|=|X_i-\alpha|\le 1$ for all $i\in [\lambda n]$, we have by Azuma's inequality (Lemma~\ref{lem:azuma}) for $t=\alpha \lambda n/2$ that 
\[\mathbb P\left(\textstyle\sum_{i\in[\lambda n]}(X_i-\alpha)\le t\right)\le 2\exp\left(\frac{-t^2}{2\lambda n}\right)=2\exp(-\alpha^2\lambda n/8).\]
Therefore, with probability at least $1-o(n^{-2})$ we have that $\sum_{i\in [\lambda n]}X_i\ge \alpha\lambda n+\alpha^2\lambda n/4\ge\lambda^2n$, as $\lambda\ll\beta,1/\Delta$. Taking a union bound over all choices of $x,y$ finishes the proof. 
   \end{proof} 
   Now, fix an embedding $\psi:S\to G$ so that \ref{property:switching:1} holds. Label $V(S')=\{s'_1,\ldots, s'_t\}$ so that $s'_1=s$ and, for each $i\in [t]$, $s'_i$ has a sole neighbour in $\{s'_1,\ldots, s'_{i-1}\}$. Using that $\delta(G[X])\ge n-2\mu n$, extend $\psi$ greedily so that the only missing vertices are $s'_{t-2\mu n+1},\ldots, s'_t$. Set $T_0=T-\{s'_{t-2\mu n+1},\ldots, s'_t\}$ and, for $i\in [2\mu n]$, let $T_i$ be tree formed by adding $s'_{t-2\mu n+i}$ to $T_{i-1}$. Letting $\psi_0=\psi$, we will find in turn an embedding $\psi_i:T_i\to  G$ for all $i\in [2\mu n]$. Label the leftover vertices of $G$ as $x_1,\ldots, x_{2\mu n}$ and, for each $i\in [2\mu n]$ in turn, do the following. Find an index $\ell_i\in [m]\setminus\{\ell_1,\ldots, \ell_{i-1}\}$ such that no vertex of $N_{\psi(S)}(\psi(s_{\ell_i}))$ has been changed so far and, if $s'_j$ is the sole neighbour of $s'_{t-2\mu n+i}$ in $\{s'_1,\ldots, s'_{t-2\mu n+i-1}\}$, 
   \[\psi(s_{\ell_i})\in N_G(\psi_{i-1}(s'_j))\quad\text{and}\quad N_{\psi(S)}(\psi(s_{\ell_i}))\subset N_G(x_i).\]
   Such an index always exists, as at each step we remove $\Delta+1$ possible candidates and $(\Delta+1)\cdot 2\mu n<\lambda^2n$, as $\lambda\gg \mu$. We then define $\psi_i(s'_{t-2\mu n+i})=\psi_{i-1}(s_{\ell_i})$, $\psi_i(s_{\ell_i}):=x_i$, and $\psi_i(x)=\psi_{i-1}(x)$ for every other vertex.
\end{proof}
\begin{lemma}\label{thm:KSS:bipartite} Let $1/n\ll \alpha\ll \mu\ll \delta\le 1$. Let $G$ be a bipartite graph with parts $U_1,U_2$ such that  $|U_1|=n-1$ and $n> |U_2|\ge (1-\mu)n$, and for every $i\in [2]$ and $u\in U_i$, $d(u)\ge (1+\delta)|U_{3-i}|/2$. If $T$ is an $n$-vertex tree with $\Delta(T)\le n^{\alpha}$, then $G$ contains a copy of $T$. 
\end{lemma}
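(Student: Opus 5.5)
The plan is to combine Lemma~\ref{lemma:bipartite:almost:spanning}, which embeds $n$-vertex trees whose bipartition classes are not too unbalanced, with a switching/absorption finish in the spirit of Lemma~\ref{lem:exact:1}. The only genuinely new difficulty is the size constraint. $G$ has $|U_1|+|U_2|\ge (2-\mu)n-1$ vertices, but $U_1$ has only $n-1$ vertices, so a tree whose larger class has size $t_1$ close to $n$ must put that class into $U_1$ almost spanningly. Let $S_1\cup S_2$ be the bipartition of $T$ with $t_1=|S_1|\ge t_2=|S_2|$. Since $\Delta(T)\le n^\alpha$, we have $t_2\ge (n-1)/n^{\alpha}$, and hence $t_1\le n-n^{1-\alpha}+1\le n-1$, so there is always at least a sublinear amount of room in $U_1$.

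I would fix an auxiliary constant $\gamma$ with $\mu\ll\gamma\ll\delta$ and split into cases. \emph{Case 1:} $t_1\le (1-\gamma)n$. Both classes then have size at most $(1-\gamma)n$, and the hypotheses $n\ge |U_i|\ge (1-\mu)n$ and $d(u)\ge (1+\delta)|U_{3-i}|/2$ are exactly those of Lemma~\ref{lemma:bipartite:almost:spanning}, which gives a copy of $T$ directly.

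\emph{Case 2:} $t_1>(1-\gamma)n$, so $t_2<\gamma n$. Counting edges gives
\[
n-1=\sum_{v\in S_1}d_T(v)\ge t_1+|\{v\in S_1: d_T(v)\ge 2\}|,
\]
so at most $t_2-1<\gamma n$ vertices of $S_1$ are non-leaves. Let $L\subset S_1$ be the set of leaves of $T$ lying in $S_1$, and let $T'=T-L$. Then $T'$ is a tree with $|T'|\le 2\gamma n$ and $S_2\subset V(T')$. The embedding proceeds in three steps.
\begin{enumerate}[label=\upshape(\arabic*)]
    \item Embed $T'$ with $S_2\to U_2$ and $T'\cap S_1\to U_1$ by a random greedy process, exactly as in Lemma~\ref{lem:exact:1}: at each step the image is chosen uniformly among the available neighbours of the parent's image. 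Since $|T'|\le 2\gamma n\ll\delta n$, the process never gets stuck.
    \item Attach all but about $\mu' n$ of the leaves in $L$ greedily; here $\mu\ll\mu'\ll\gamma$. Each image of a vertex of $S_2$ has more than $(1+\delta)(n-1)/2$ neighbours in $U_1$, and each such vertex needs at most $n^\alpha$ leaves, so this step goes through as long as the partial embedding is suitably spread.
    \item Place the last $\mu' n$ leaves into the remaining free vertices of $U_1$ by switchings.
\end{enumerate}

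For step (3), the random embedding should satisfy a bipartite analogue of property~\ref{property:switching:1}: for all $x\in U_1$ and $y\in U_2$ in the image, there are at least $\lambda^2n$ embedded leaves $\ell\in L$ whose parent has image adjacent to $x$; and for any free $x\in U_1$ and any parent $p\in S_2$ still needing a leaf, many used vertices $\psi(\ell)$ are adjacent to $\psi(p)$ while $x$ is adjacent to the image of $\ell$'s parent. Given this, a switching frees a vertex that is adjacent to $\psi(p)$ while the displaced leaf is re-embedded at $x$. Leaves are the easiest objects to switch, since only one adjacency has to be checked. Property \ref{property:switching:1} would be proved as in Lemma~\ref{lem:exact:1}: fix $\lambda n$ designated leaves whose parents are embedded at random, use the fact that any two vertices of $U_2$ share at least $\delta|U_1|$ common neighbours in $U_1$ (and symmetrically for $U_1$), and then apply Azuma's inequality (Lemma~\ref{lem:azuma}) together with a union bound.

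The step I expect to be the main obstacle is the final count in Case 2. The leftover free vertices must number exactly the remaining leaves, and the switchings need many candidate leaves hanging from many distinct parents, spread out in $U_2$. The danger is a tree in which the leaves are concentrated on few vertices of $S_2$. If that causes trouble, I would first try a Hall-type argument for the leaf-stars, whose deficiency is controlled by the bipartite minimum degree $(1+\delta)|U_{3-i}|/2$. Failing that, I would move a few high-leaf-degree vertices of $S_2$ into $T'$ and embed them onto vertices chosen to be typical, so that every free vertex of $U_1$ retains at least $\lambda^2 n$ switching options.
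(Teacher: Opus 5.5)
Your Case~1 and the switching step~(3) agree with the paper, but step~(2) has a genuine gap. In Case~2 you must put at least $(1-2\gamma)n$ leaves into the free part of $U_1$, which has only $n-1$ vertices in total. The image of a leaf-parent, however, is only guaranteed $(1+\delta)(n-1)/2$ neighbours in $U_1$. Greedy attachment is therefore only safe until about $(1+\delta)n/2$ vertices of $U_1$ are occupied. Beyond that a parent's whole neighbourhood may already be used, and the roughly $(1-\delta)n/2$ leaves still to be placed are far more than the $O(\lambda n)$ switchings an absorber can perform.

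``Suitably spread'' hides exactly the hard part. Attaching the leaves is a $b$-matching problem, so you need the Hall-type balance condition: for every set $Y$ of free vertices of $U_1$, the number of leaves hanging from parents embedded in $N(Y)$ must be at least $|Y|-(n-1-t_1)$. Your tools do not give this. The random greedy process picks each image among the neighbours of its parent's image, so common neighbourhoods plus Azuma only show that a leaf-parent lands in a fixed $N(y)$ with probability at least about $\delta/2$. That settles Hall for $|Y|$ up to about $\delta n/2$, and trivially for $|Y|>(1-\delta)|U_1|/2$ (then $N(Y)=U_2$), but says nothing in between.

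Your fallbacks address a different worry, leaves concentrated on few parents, which is actually harmless. In the martingale the increments are bounded by $d_T(p)\le n^{\alpha}$. Hence the sum of squared increments is at most $n^{\alpha}$ times the number of leaves, and the failure probability is $\exp(-\Omega(n^{1-\alpha}))$.

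The missing idea is to let the regularity-based Lemma~\ref{lemma:bipartite:almost:spanning} do the bulk of the work. Its random assignment of small pieces to clusters is what guarantees that $U_1$ fills up in a balanced way. The paper proceeds as follows:
\begin{enumerate}
\item It embeds at random only a small subtree $S$ from Lemma~\ref{lem:cutvertex:2}, with $|S|\le 2\gamma n$ and containing $\gamma n/2$ leaves of the large class. This $S$ is the absorber with property~\ref{property:switching}.
\item It sets aside a set $L'$ of $\lambda^2 n$ leaves outside $S$. The large class of $T-L'$ then has size at most $n-\lambda^2 n$, and $\mu\ll\lambda^2$, so Lemma~\ref{lemma:bipartite:almost:spanning} extends the embedding of $S$ to all of $T-L'$.
\item Only these last $\lambda^2 n$ leaves are inserted by switchings, exactly as in your step~(3).
\end{enumerate}
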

\begin{proof}

Let $T$ be an $n$-vertex tree with $\Delta(T)\le n^{\alpha}$ and bipartition $V(T)=A\cup B$ such that $|A|\ge |B|$. Let $\mu'>0$ be an auxiliary constant with $\mu\ll\mu'\ll \delta$. If $|A|\le (1-\mu')n$, then $T$ is contained in $G$ by Lemma~\ref{lemma:bipartite:almost:spanning}. Assume otherwise that $|A|\ge (1-\mu')n$ and let $L$ be the number of leaves in $A$. Note that 
\[n-1=e(A,B)=\sum_{a\in A}d_T(a)\ge |L|+2|A\setminus L|=2|A|-|L|, \]
from which it follows that $|L|\ge 2|A|-n+1\ge n-3\mu' n$. We introduce  auxiliary constants $\gamma,\lambda$ with $\mu'\ll \lambda\ll\gamma\ll \delta$, and use Lemma~\ref{lem:cutvertex:2} to find a vertex $s_1\in V(T)$ and subtrees $S,S'\subset T$ such that  $\gamma n\le |S|\le 2\gamma n$, $S\cup S'=T$ and $S$ and $S'$ intersect exactly at $s_1$. Moreover, as at least $n-3\mu' n$ vertices of $A$ are leaves, $S$ contains a set $L_S\subset L$ of leaves not containing $s_1$ and such that $|L_S|=\gamma n/2$ . We label the vertices of $S$ as $s_1,\ldots, s_m$ so that for every $i\in [m]$, there is a unique vertex $s_j$ with $j<i$ such that $s_js_i\in E(T)$, and all vertices from $L_S$ appear at the end of this ordering. Let $\ell\in [m]$ such that $s_{\ell+1},\ldots,s_m$ are precisely the vertices in $L_S$. Our first step is to find an embedding $\psi:S\to G$ so that the following property holds.
\begin{enumerate}[label=\upshape{\textbf{Q}}]
    \item\label{property:switching} For all $y\in U_1$ and $x\in U_2$, $|\{\ell<i\le m:\psi(s_i)\in N_G(x)\quad\text{and}\quad N_{\psi(S)}(\psi(s_i))\subset N_G(y)\}|\ge \lambda n.$
\end{enumerate}
Consider the following random process to find an embedding $\psi:S\to G$. First, select $v_1=\psi(s_1)$ uniformly at random from $U_1$ if $s_1\in A$, and from $U_2$ otherwise. For $i\in [m]$, assume we have embedded $s_1,\ldots, s_{i-1}$, let $v_i=\psi(s_i)$ be the image of $s_i$ under this embedding. At step $i$, pick $v_i=\psi(s_i)$ uniformly at random from  $N_G(v_j)\setminus\{v_0,\ldots, v_{i-1}\}$, where $v_j=\psi(s_j)$ is the image of sole neighbour of $s_i$ in $\{s_1,\ldots, s_{i-1}\}$. Note that as $|S|\le 2\gamma n\le \delta (G)$, this process always succeeds in finding an embedding of $S$.

\begin{claim} $\psi:S\to G$ satisfies~\ref{property:switching} with positive probability.
\end{claim}

\begin{proof}Let us fix $y\in U_1$ and $x\in U_2$. For each $i\in [\ell]$, let $d_i$ be the number of neighbours of $s_i$ in $L_S$, and let $X_i$ be the random variable that takes value $d_i$ if $v_i\in N_G(y)$ and 0 otherwise. Note that if $d_i>0$, then $v_i$ is the image of some parent of leaves in $L_S$ and thus it is embedded in $U_2$. For each $i\in [\ell]$ such that $d_i>0$, given the choice of $v_1,\ldots, v_{i-1}$, we have that
\begin{equation}\label{eq:martingale}\mathbb P(X_i=d_i\mid X_{i-1},\ldots, X_1)=\frac{|N_G(y)\cap (N_G(v_j)\setminus \{v_1,\ldots, v_{i-1}\})|}{|N_G(v_j)\setminus \{v_1,\ldots, v_{i-1}\}|}\ge \frac{\delta n-2\gamma n}{n}\ge \delta/2,
\end{equation}
where $s_j$ is the sole neighbour of $s_i$ in $\{s_1,\ldots, s_{i-1}\}$. Therefore, we have $\mathbb E[X_i\mid X_{i-1},\ldots, X_{1}]\ge \delta d_i/2.$

Let $Z_0=0$ and, for $i\in [\ell]$, let $Z_i=\sum_{j\in [i]}(X_j-\delta d_j/2)$. Note that $\mathbb E[Z_{i+1}\mid Z_i,\ldots, Z_0]=Z_i+\mathbb E[(X_{i+1}-\delta d_{i+1}/2)\mid Z_i,\ldots, Z_0]\ge Z_i$
implies $(Z_i)_{i\ge 0}$ is a submartingale. Note that $\sum_{i\in [\ell]}d_i^2\le \Delta(T)\sum_{i\in [\ell]}d_i\le n^{\alpha}|L_S|=\gamma n^{1+\alpha}/2$ and $|Z_i-Z_{i-1}|=|X_i-\delta d_i/2|\le d_i$ for all $i\in [\ell]$. Therefore,  for $t=\delta \gamma n/8$, Azuma's inequality (Lemma~\ref{lem:azuma}) gives that 
\begin{equation}\label{eq:martingale:1}
    \mathbb P\left(\textstyle\sum_{i\in [\ell]}(X_i-\delta d_i/2)\le t\right)\le -2\exp\left(\frac{-t^2}{\sum_{i\in [\ell]}d_i^2}\right)\le 2\exp\left(\frac{-\delta ^2n^{1-\alpha}}{2\gamma}\right).
\end{equation}
Let $m'=\sum_{i\in [\ell]}X_i$ and condition on the event that $m'\ge \gamma^2 n $, which holds with probability $1-o(n^{-2})$ as $\gamma\ll \delta$.  Let $j_1,\ldots, j_{m'}\in\{\ell+1,\ldots,m\}$ be such that $y$ is adjacent to the parent of $v_{j_i}$ for $i\in [m']$. For each $i\in [m']$, let $Y_i$ be the random variable that takes value 1 if $v_{j_i}\in N_G(x)$ and 0 otherwise. A similar calculation as in~\eqref{eq:martingale} shows that $\mathbb P(Y_i=1\mid Y_{i-1},\ldots, Y_1)\ge \delta/2$. Therefore, letting $W_0=0$ and $W_i=\sum_{j\in [i]}(Y_i-\delta/2)$ for $i\in [m']$, we have 
\[\mathbb E[W_{i+1}\mid W_i,\ldots, W_0]=W_i+\mathbb E[(Y_{i+1}-\delta/2)\mid W_i,\ldots, W_0]\ge W_{i}.\]
Hence $(W_i)_{i\ge 0}$ is a submartingale and $|W_{i}-W_{i-1}|=|Y_i-\delta /2|\le \delta/2$  for $i\in [m']$. Azuma's inequality (Lemma~\ref{lem:azuma}), for $t=m'\delta /4$, gives
\begin{equation}\label{eq:martingale:2}\mathbb P\left(\textstyle\sum_{i\in [m']}(Y_i-\delta/2)\le t\right)\le 2\exp(-t^2/m')\le 2\exp(-\delta^2m'/16)\le 2\exp(-\delta^2\gamma^2n/16).\end{equation}
Then, letting $\ell'=\sum_{i\in [m']}Y_i$, we have that $\ell'\ge m'\delta/16\ge \gamma^3n$ with probability at least $1-2\exp(-\delta^2\gamma^2n/16)$, given that $m'\ge \gamma^2n$. Therefore,~\eqref{eq:martingale:1} and~\eqref{eq:martingale:2} imply
\[\mathbb P(\ell'\ge \gamma^3n\quad \text{and}\quad m'\ge \gamma^2n)\ge \mathbb P(\ell'\ge \gamma^3n|m'\ge \gamma^2n)\mathbb P(m'\ge \gamma^2n)\ge 1-o(n^{-2}).\]
Thus a union bound over all choices of $x$ and $y$ proves the claim.\end{proof}
Fix an embedding $\psi:S\to G$ that satisfies~\ref{property:switching} and note that $\psi(L_S)\subset U_1$. As $T$ contains a set $L\subset A$ of leaves with $|L|\ge n-3\mu' n$, we may remove a set $L'\subset V(S')\cap A$  of size $|L'|=\lambda^2 n$ that does not contain $s_1$. Set $T'=S'-L'$. Note that, for $i\in [2]$, every vertex in $U_i$ has degree at least 
\[(1+\delta)|U_{3-i}|/2-|S|\ge(1+\delta)|U_{3-i}|/2-2\gamma n\ge(1+\delta/2)|U_{3-i}|/2\]
outside $\psi(S)$, where we used that $|S|\le 2\gamma n\le \delta n/100$ and $|U_1|\ge |U_2|\ge (1-\mu)n$. Then, as $|V(T')\cap A|+|V(S)\cap A|\le n-\lambda^2 n$, we may use Lemma~\ref{lemma:bipartite:almost:spanning} to extend the embedding of $S$ into a embedding of $T-L'$.

Let $T_0=T-L'$ and let $\psi_0:T_0\to G$ be the embedding we have so far. Label $L'$ as $\ell_1,\ldots, \ell_t$, where $t=\lambda^2n$, and let $y_1,\ldots, y_t$ be some arbitrary vertices in $U_1$ not used by $\psi_0(T_0)$. For $i\in [t]$, we will find an embedding $\psi_i:T_i\to G$ where $T_i$ is the tree obtained by adding $\ell_i$ to $T_{i-1}$ and $V(\psi_i(T_i))=V(\psi_{i-1}(T_{i-1}))\cup\{y_i\}$. For $i\in [t]$ in turn, let $p_i\in U_2$ be the image in $\psi_{i-1}$ of the parent of $\ell_i$ and find some $j_i\in\{\ell+1,\ldots,m\}\setminus \{j_1,\ldots, j_{i-1}\}$ such that 
\[v_{j_i}\in N_G(p_i)\quad\text{and}\quad N_{\psi_{i-1}(T_{i-1})}(v_{j_i})\subset N_G(y_i).\]
Then define $\psi_i$ by setting $\psi_i(\ell_i)=v_{j_i}$  and letting $y_i$ take the role of $v_{j_i}$ in $\psi_{i}$, which completes the $i$-th step. This will be possible as property~\ref{property:switching} gives at least $\lambda n>t$ choices for $j_i$ at step $i\in [t]$. Note that after $t$ steps, $\psi_t(T_t)$ is indeed a copy of $T$, which thus finishes the proof.
\end{proof}
Now we prove the last two results of this section. In these results, we embed a tree into a graph which consists of two very dense bipartite graphs which some additional edges. The proofs use randomised semi-greedy arguments, where we first partition the tree into small subtrees and then assign those subtrees randomly to one of the bipartite graphs where most of the subtree is embedded. We then show that with positive probability such an embedding is possible.
\begin{lemma}\label{lem:emb:extremal:0}Let $1/n\ll \alpha\ll \mu\ll \beta\ll\gamma\le 1$ and let $T$ be a tree with $(1-\gamma)n$ vertices and $\Delta(T)\le n^\alpha$. Suppose $G$ is a graph with at most $n$ vertices that contains pairwise disjoint sets $U_1,U_2$ such that 
\begin{enumerate}[label=\upshape{(\roman{enumi})}]
    \item $|U_i|\ge (1-\mu)n/2$ for $i\in [2]$,
    \item $\delta(G[U_1,U_2])\ge (1-\mu)n/2$, and 
    \item $e(U_1)\ge 100\beta n^2$.
\end{enumerate}
Then, for every $t\in V(T)$ and $u\in V(G)$, there is an embedding of $T$ in $G$ with $t$ copied to $u$.
\end{lemma}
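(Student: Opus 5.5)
The idea is that the dense bipartite graph $G[U_1,U_2]$ can host any tree whose bipartition classes both have size at most roughly $(1-\mu)n/2$. The edges inside $U_1$ let us "shift parity" when $T$ is unbalanced. The tree has only $(1-\gamma)n$ vertices, so we have $\gamma n$ room to spare.

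\emph{Preparation.} Let $A\cup B$ be the bipartition of $T$ with $|A|\ge|B|$. If $|A|\le(1-\gamma/2)n/2$, embed greedily and directly in $G[U_1,U_2]$, putting $A$ into $U_1$ and $B$ into $U_2$. Each vertex of $U_i$ has at least $(1-\mu)n/2-|U_{3-i}\setminus N(v)|$ neighbours in $U_{3-i}$. Since $|G|\le n$ and $|U_1|,|U_2|\ge(1-\mu)n/2$, we get $|U_i|\le(1+\mu)n/2$. So every vertex misses at most $\mu n$ vertices of the other side, and greedy embedding works because $\gamma\gg\mu$.

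\emph{Starting at an arbitrary $u$.} If $u\notin U_1\cup U_2$, we first map $t$ to $u$ and its neighbours to $N(u)$. This is where the hypotheses give the least. We may need to assume $u$ has many neighbours in $U_1\cup U_2$; otherwise we would route through $e(U_1)$. I would treat this carefully and perhaps reduce to $u\in U_1\cup U_2$. Since $u$ is arbitrary, I expect the intended reading is that degree into $U_1\cup U_2$ is available. Failing that, I would root at $t$ and embed a neighbour of $t$ to a vertex of $U_1\cup U_2$ adjacent to $u$, which requires $d(u,U_1\cup U_2)>0$.

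\emph{The unbalanced case.} Suppose $|A|>(1-\gamma/2)n/2$. Because $e(U_1)\ge100\beta n^2$, the graph $G[U_1]$ contains a subgraph of minimum degree at least $50\beta n$. In particular it contains many vertex-disjoint edges, in fact a matching $M$ in $U_1$ of size at least $\beta n$; greedy works, since a maximal matching of size $<\beta n$ would cover all edges with $<200\beta n^2/2$ edges.

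Decompose $T$ into small pieces using Lemma~\ref{vtx_disjoint_beta_decomp} (or use a $\beta'$-decomposition via Lemma~\ref{beta_decomp_v2} with $\mu\ll\beta'\ll\beta$). Choose a set $\mathcal F$ of pieces, joined to the rest by single edges, whose images we flip. A piece with root $r$ whose parent $p$ lies in $A$ mapped to $U_1$ normally has $r\in B$ mapped to $U_2$. If instead we map $r$ to $U_1$ via an edge of $M$ at $\psi(p)$, then the whole piece swaps sides.

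Flipping pieces of total imbalance about $|A|-|B|$ rebalances the class counts, so that each side receives at most $(1-\gamma/4)n/2$ vertices. We need enough flip opportunities: each flip uses one $U_1$-edge at $\psi(p)$. So we embed so that parents of flipped pieces land on vertices of high $G[U_1]$-degree, which means reserving a set $D\subseteq U_1$ of vertices with $d_{G[U_1]}\ge50\beta n$; there are at least $\beta n$ such vertices. Since pieces have size at most $\beta'n$ and there are $O(1/\beta')$ of them, the parities can be adjusted to within $\beta'n$. This follows from a partition argument like Lemma~\ref{lem:numberPartition}, or alternatively from a random choice of flips in the spirit of Lemma~\ref{lem:emb1}.

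\emph{Main obstacle.} The crux is the bookkeeping of the degree of the flip vertices. We must map each designated parent $p$ into $D$ (possible since $\psi$ of its own parent has at least $|D|-\mu n>0$ neighbours in $D$, as $\beta\gg\mu$). Then $\psi(r)$ must be a $G[U_1]$-neighbour of $\psi(p)$ still unused, which holds since we use at most $(1-\gamma)n$ vertices but... this is the delicate point. Used vertices may exhaust $N_{U_1}(\psi(p))$.

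To avoid this, embed all flip edges first: choose the $O(1/\beta')$ flip pairs $(p,r)$ as disjoint edges in $M\cap D$ at the very beginning. Then embed the rest greedily in $G[U_1,U_2]$, connecting to these prescribed images through common neighbourhoods. Any two vertices on the same side share at least $(1-3\mu)n/2$ neighbours, so paths of length 2 and 3 between prescribed vertices exist. To make this possible, the pieces should be joined to the prescribed pairs by paths of length at least $3$, which Lemma~\ref{vtx_disjoint_beta_decomp}\ref{decomp2} guarantees.
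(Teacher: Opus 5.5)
You handle the balanced case correctly, and you are right to be cautious about $u$: the paper's proof simply assumes $u\in U_2$. The gap is in the rebalancing step, which is the heart of the lemma.

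In your scheme the default orientation sends $A$ to $U_1$ and $B$ to $U_2$, and the only extra edges lie inside $U_1$. So a cut edge $pr$ can join two pieces of opposite orientation only if both $\psi(p)$ and $\psi(r)$ lie in $U_1$. This has two consequences you do not address:
\begin{itemize}
\item A piece attached at a vertex that lands in $U_2$ must have the same orientation as its parent piece.
\item Once a piece $Q$ is flipped, its $A$-vertices land in $U_2$. Every child piece attached at an $A$-vertex of $Q$ is then forced to flip too, since keeping it unflipped would need an edge inside $U_2$.
\end{itemize}
Hence ``flipping $Q$'' moves the imbalance of a possibly huge union of pieces. The flips are not independent, so neither the partition argument of Lemma~\ref{lem:numberPartition} nor random flips as in Lemma~\ref{lem:emb1} can be invoked.

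If you instead restrict $\mathcal F$ to pendant pieces, propagation disappears but there may be far too little to flip. Take a long caterpillar whose $B$-spine vertices carry $n^{\alpha}-2$ leaves each. Then $|A|-|B|=\Theta(n)$, yet every piece from Lemma~\ref{vtx_disjoint_beta_decomp} contains a spine segment, so the piece-tree is a path and only its end pieces are pendant. Separately, the target is a flipped imbalance of about $(|A|-|B|)/2$, not $|A|-|B|$; flipping $|A|-|B|$ merely swaps the two sides.

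The missing idea, which the paper uses, is to make the connecting vertices orientation-neutral. The paper's construction:
\begin{itemize}
\item It takes an edge-disjoint $\nu$-decomposition (Lemma~\ref{beta_decomp_v2}).
\item It finds $W\subseteq U_1$ consisting of vertices with $d(w,U_1)\ge\beta^2n$, small random disjoint sets $X_1,X_2\subseteq U_1$, and subsets $W_i\subseteq W\cap X_i$ such that $G[W_1,W_2]$ has linear minimum degree.
\item Every vertex shared by two pieces is embedded into $W_2$.
\item Each tree-neighbour of a shared vertex goes into $W_1$ or $U_2$, according to the orientation $\varepsilon_i$ of the piece containing that edge.
\item The rest of each piece goes into $G[U_1\setminus(X_1\cup X_2),U_2]$ with orientation $\varepsilon_i$.
\end{itemize}
A vertex of $W_2$ has many neighbours both in $W_1\subseteq U_1$ and in $U_2$, so every piece meeting it can continue on either side. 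Therefore all $2^s$ orientation vectors are realizable. The $\varepsilon_i$ can then be chosen independently and uniformly, and McDiarmid's inequality shows that each side receives about $|F|/2\le(1-\gamma)n/2$ vertices.

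Reserving the small set $X_1\cup X_2$ for these few special vertices also removes your exhaustion worry. No pre-embedding of flip edges is needed.
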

\begin{proof}
     First, we observe that $U_1$ contains a subset $W$ of size $|W|\ge \beta^2 n$ such that each vertex $w\in W$ satisfies $d(u,U_1)\ge \beta^2n$, as otherwise we have 
\[200\beta n^2\le 2e(U_1)\le |U_1|\cdot \beta^2n+\beta^2 n\cdot \Delta(G[U_1])\le 8\beta^2n^2,\]
a contradiction. Let $X_1,X_2\subset U_1$ be two disjoint random subsets where an element of $U_1$ either belongs to $X_1$ with probability $p$, or belongs to $X_2$ with probability $p$, or to $U_1\setminus (X_1\cup X_2)$ otherwise, and all choices are made independently. By Lemma~\ref{lemma:chernoff}, there is a choice of $X_1,X_2$ for which 
\begin{itemize}
    \item $pn/2\le |X_i|\le 2pn$ and $|X_i\cap W|\ge p\beta^2n/10$ for $i\in [2]$,
\item every  $w\in W$ satisfies $d(w,X_i)\ge \beta^2pn/2$, and
    \item every  $x\in U_2$ satisfies $d(x,U_1\setminus (X_1\cup X_2)\ge (1-10\mu)n/2$ and $d(x,X_i)\ge (p-\mu)d(x,U_1)$ for $i\in [2]$.
\end{itemize}
Noting that the first and second bullet points together imply 
\[e(X_1\cap W,X_2\cap W)\ge \frac{\beta^2pn}{2}\cdot|W\cap X_1|\ge \beta^4p^2n^2/4,\]
hence there are sets $W_1\subset W\cap X_1$ and $W_2\subset W\cap X_1$ such that $\delta(G[W_1,W_2])\ge \Omega(\beta^4pn)$.

Let us now turn our attention to $T$. We define $T_0=\{t\}\cup N_{T}(t)\cup N_{T}^2(t)$ and note that $|T_0|\le 1+\Delta(T)+\Delta(T)^2\le n^{1/100}$ provided $\alpha\ll 1$. Let $F=T-T_0$ and root each component of $F$ at vertices of distance 3 from $t$ in $T$. For an auxiliary constant $\nu>0$ with $\alpha\ll\nu\ll\mu$, we use Lemma~\ref{beta_decomp_v2} to find a $\nu$-decomposition $F=T_1\cup \ldots \cup T_s$ such that $s\le 100\nu^{-2}$ and $|T_i|\le \nu n$ for all $i\in [s]$. We now produce an embedding $\psi:T\to G$ so that $\psi(t)=u$. Let $V(T)=A\cup B$ be the unique bipartition of $T$. Without loss of generality, we assume $u\in U_2$, so we set $\psi(t)=u$ and then embed $\psi(T)$ so that $N_{T}(t)$ is mapped to $U_1$ and $N^2_{T}(t)$ is mapped to $U_2$.  
Assume $T_1,\ldots, T_s$ are ordered so that $T_0\cup T_1\cup \ldots\cup T_i$ is connected for each $i\in [s]$. Let $S$ be the connecting vertices of $T_1,\ldots, T_s$, that is, the set of those vertices appearing in at least two distinct $T_i$'s.  

For $i\in[s]$, let $\eps_i\in\{0,1\}$ be chosen uniformly at random, making all choices independently. For each $i\in [s]$ in turn, embed $T_i$ into $G$ under the following rules (if possible).\stepcounter{propcounter}
\begin{enumerate}[label=\upshape{\textbf{\Alph{propcounter}\arabic{enumi}}}]
    \item\label{G:ext:1} Every vertex in $S\cap V(T_i)$ is mapped to $W_2$. 
    \item\label{G:ext:2} If $\eps_i=0$, then each vertex $x\in V(T_i)\setminus (S\cup N_{T}(S))$ is mapped to $U_1\setminus (X_1\cup X_2)$ if $x\in A$ and mapped to $U_2$ if $x\in B$. If $\eps_i=1$, then each vertex $x\in V(T_i)\setminus (S\cup N_{T}(S))$ is mapped to $U_2$ if $x\in A$ and mapped to $U_1\setminus (X_1\cup X_2)$ if $x\in B$.
    \item\label{G:ext:3} If $x\in V(T_i)\cap A$ is the neighbour of some vertex in $S$, then $x$ is mapped to $W_1$ whenever $\eps_i=0$, and mapped to $U_2$ if $\eps_i=1$. If $x\in V(T_i)\cap B$ is the neighbour of some vertex in $S$, then $x$ is mapped to $U_2$ whenever $\eps_i=0$, and mapped to $W_1$ if $\eps_i=1$.     
    \end{enumerate}
We claim that with probability at least $1/2$,~\ref{G:ext:1}--\ref{G:ext:3} extend the embedding of $T_0$ to an embedding $\psi:T\to G$. For each $i\in [s]$, define the random variable 
\[Z_i=|V(T_i)\cap A|\cdot(1-\eps_i)+|V(T_i)\cap B|\cdot \eps_i.\]
Noting that $\mathbb E[Z_i]=\frac{1}{2}|T_i|$ for each $i\in [s]$, the random variable $Z=Z(\eps_1,\ldots, \eps_s)=\sum_{i\in [n]}Z_i$ satisfies $\mathbb E[Z]=|F|/2\le (1-\gamma)n/2$. As changing the value of $(\eps_1,\ldots, \eps_s)$ in one entry changes the value of $Z$ by at most $|T_i|\le \nu n$ and noting that $\sum_{i\in [s]}|T_i|^2\le \nu n\sum_{i\in [s]}|T_i|\le 2\nu n^2$, McDiarmid's inequality (Lemma~\ref{lemma:mcdiarmid}) gives  
\[\mathbb P(Z>n/2-\gamma n/100)\le \mathbb P(Z>\mathbb E[Z]+\gamma n/2)\le \exp(-\gamma^2n^2/(16\nu n^2))\le 1/2\]
provided $0<\nu\ll\gamma\le 1$. 

Conditioned on the event $Z\le n/2-\gamma n/100$, we show that $\psi$ can be successfully extended. Suppose that $\psi:T_0\cup T_1\cup\ldots \cup T_i\to G$ is defined for some $1\le i<s$, and try to embed $T_{i+1}$ following~\ref{G:ext:1}--\ref{G:ext:3}. Let $r_i\in S$ be the root of $T_{i+1}$, which is already embedded into some vertex $\psi(r_i)\in W_2$ by~\ref{G:ext:1}. For the sake of the argument, let us assume $\eps_{i+1}=0$. We embed $T_{i+1}$ greedily following~\ref{G:ext:2} and~\ref{G:ext:3}. When embedding a vertex $v\in (V(T_{i+1})\cap A)\setminus (S\cup N_T(S))$, as its parent is embedded into $U_2$, the number of choices for $\psi(v)$ is at least 
\[(1-10\mu)n/2-\sum_{j\in [s]:\eps_j=0}|V(T_j)\cap A|-\sum_{j\in [s]:\eps_j=1}|V(T_j)\cap B|-|X_1\cup X_2|\ge \mu n/100,\]
as $Z\le n/2-\gamma n/100$, $\mu\ll\gamma$ and $|X_1\cup X_2|\le 4pn\le \mu n/100$, and thus we can safely embed $v$. When the parent of $v$ is embedded into $U_1$, the argument is completely analogous. 

Now suppose we are to embed a vertex $v\in (S\cup N_T(S))\cap V(T_{i+1})$ and let $w\in V(T_{i+1})$ be the parent of $v$. Note that $d(\psi(w),W_i)\ge (p-\mu)(1-\mu)n/2\ge \beta^2pn/100$ for $i\in [2]$, and the number of occupied vertices in $W_1\cup W_2$ is at most 
\[|S|+|N_T(S)|\le \nu^{-1}+\nu^{-1}\Delta(S)\le\beta^2pn/200,\]
thus we can safely choose an unused vertex $\psi(v)\in N(\psi(w))\cap W_1$, if $v\in S$, or $\psi(v)\in N(\psi(w))\cap W_2$, if $v\in N_T(S)$.

\end{proof}

\begin{lemma}\label{lem:emb:extremal:3}Let  $1/n \ll \mu\ll 1/C\ll 1/\Delta$ and let $T$ be an $n$-vertex tree with $\Delta(T)\le \Delta$. Suppose $G$ contains pairwise disjoint sets $U_1,U_2, V_1, V_2$ which themselves contain subsets $U_i' \subset U_i$ and $V_i' \subset V_i$ such that 
\begin{enumerate}[label=\upshape{(\roman{enumi})}]
    \item $|U_1\cup V_1|=n-1$ and $|U_2\cup V_2|=n-1$,
    \item $|U_i'| \le \mu n/2$ and $|V_i'| \le \mu n/2$ for $i \in [2]$,
    \item $\delta(G[U_1 \setminus U_1' ,U_2 \setminus U_2' ])\ge (1-2\mu)n/2$ and $\delta(G[V_1\setminus V_1',V_2\setminus V_2'])\ge (1-2\mu)n/2$,
    \item $\delta(G[U_1 ,U_2 ])\ge (1-5\mu)n/4$ and $\delta(G[V_1,V_2])\ge (1-5\mu)n/4$, and
    % \item $\delta(G[U_1 ,U_2 ])\ge (1-\mu)n/2$ and $\delta(G[V_1,V_2])\ge (1-\mu)n/2$, and
    \item $d(G[U_1, V_2]) \ge C $.
\end{enumerate}
Then, $G$ contains a copy of $T$.
\end{lemma}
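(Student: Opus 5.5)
We split $T$ with a single edge and embed the two sides in the two almost complete bipartite graphs, joined by an edge of $G[U_1,V_2]$. The plan is as follows.

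\textbf{Step 1: a bridge edge.} Since $d(G[U_1,V_2])\ge C$ with $C$ large compared with $\Delta$, the bipartite graph $G[U_1,V_2]$ contains a matching (or a star forest) with many edges. Greedily, average degree at least $C$ forces a matching of size at least about $C/4$. Among these edges we will pick one, $xy$ with $x\in U_1$ and $y\in V_2$, whose endpoints are usable. If $x\in U_1'$, then $x$ still has at least $(1-5\mu)n/4$ neighbours in $U_2$ by~(iv), and most of them lie in $U_2\setminus U_2'$ since $|U_2'|\le\mu n/2$. From there, property~(iii) makes $G[U_1\setminus U_1',U_2\setminus U_2']$ a near-complete balanced bipartite graph on about $n-1-\mu n$ vertices. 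The same holds for $y$ on the $V$ side.

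\textbf{Step 2: splitting the tree.} We cut $T$ at an edge $ab$ into subtrees $T_a\ni a$ and $T_b\ni b$ of sizes $n_a,n_b$ with $n_a+n_b=n$. We then embed $T_a$ into $G[U_1,U_2]$ with $a\mapsto x$, and $T_b$ into $G[V_1,V_2]$ with $b\mapsto y$. The bipartition of $T_a$ must fit: the class of $T_a$ containing $a$ goes to $U_1$, and the other class goes to $U_2$. The sizes must satisfy $|V(T_a)\cap \text{class}(a)|\le |U_1|$, $|V(T_a)\setminus\text{class}(a)|\le|U_2|$, and similarly for $T_b$ with $V_2,V_1$. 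Since $|U_1|+|V_1|=n-1=|U_2|+|V_2|$ while $|T|=n$, we need the extra vertex absorbed by the cross edge. This works because the colour classes of $T$ of sizes $t_1+t_2=n$ are distributed as $U_1\cup V_1$ receiving one class and $U_2\cup V_2$ the other, up to the parity switch across $ab$. Because $x\in U_1$ and $y\in V_2$ lie on "opposite" sides, the class of $a$ is split between $U_1$ and $V_2$, and the class of $b$ between $U_2$ and $V_1$. So we are really embedding into a bipartite graph with sides $U_1\cup V_2$ and $U_2\cup V_1$, each of size about $n$ in total. The freedom in where to cut lets us choose $ab$ so that the four resulting counts fit into $|U_1|,|U_2|,|V_1|,|V_2|$. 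We slide the cut edge along $T$ using Lemma~\ref{lem:cutedge:0}, with subtree sizes changing by at most $\Delta$-bounded amounts. If exact fit is impossible with one edge, we use up to $\Delta$ of the many disjoint cross edges, cutting $T$ into several pieces, to balance the counts; this is where $C\gg\Delta$ is used.

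\textbf{Step 3: embedding each side.} Each side is essentially a spanning tree in a near-complete bipartite graph with a few bad vertices in $U_i',V_i'$ that have degree only about $n/4$. We handle this in the style of Lemma~\ref{lem:exact:1} and Lemma~\ref{thm:KSS:bipartite}. First we cover the bad vertices greedily with low-depth tree vertices. Each bad vertex has degree about $n/4$, enough to host a vertex whose tree neighbours are embedded into good vertices. Next we embed a random piece $S$ to create the switching property. Then we fill greedily and finish with switchings for the last $O(\mu n)$ vertices, which handles exact spanning-ness.

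The main obstacle is Step 2's exact size bookkeeping. We must fit sizes exactly into $n-1$ plus $n-1$ vertices, with the extra vertex from crossing, while the side graphs are only nearly complete. Checking that a suitable cut exists for every bounded-degree $T$, including when one colour class is much larger, is the hard part. For the unbalanced case we would first try taking $a$ to be a leaf-adjacent vertex so that the cut shifts exactly one vertex between classes.
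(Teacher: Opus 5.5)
There is a genuine gap. It sits in Step~2, which you leave unresolved, and the bookkeeping you do there is wrong.

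\textbf{The bipartition is misidentified.} The only cross edges you are guaranteed are $U_1$--$V_2$ edges, so the host is bipartite with sides $U_1\cup V_1$ and $U_2\cup V_2$, not $U_1\cup V_2$ and $U_2\cup V_1$. If you cut $ab$ with $a\mapsto x\in U_1$ and $b\mapsto y\in V_2$, then the class of $a$ in $T$ goes to $U_1$ inside $T_a$ and to $V_1$ inside $T_b$. Each side has exactly $n-1$ vertices, and each colour class of $T$ has at most $n-1$ vertices, so there is no ``extra vertex'' to absorb. Moreover $T$ occupies only about half of the $2n-2$ vertices, so the spanning/switching machinery of Step~3 addresses a problem that does not arise once a split with linear slack is found.

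\textbf{The real constraint is balance.} The larger class can have up to $\frac{\Delta-1}{\Delta}n+\frac1\Delta$ vertices. By (i) and (iii), each of $U_1,U_2,V_1,V_2$ has at most $(1+2\mu)n/2$ vertices. So that class must be split between two of these sets, each receiving at most roughly $(1-\Omega(1/\Delta))\frac n2$, to leave room for greedy embedding.

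\textbf{One cut edge cannot do this in general.} Sliding the cut does not change component sizes in bounded steps: moving to an adjacent edge can shift a whole branch. For example, with $\Delta\ge 4$, let $T$ be a centre $z$ with $\Delta$ legs. Each leg is a caterpillar whose spine alternates between vertices of degree $\Delta$ carrying $\Delta-2$ pendant leaves and vertices of degree $2$. The leaves, the degree-$2$ spine vertices and $z$ form a class of size about $(1-1/\Delta)n$. For every edge $e$, the component of $T-e$ containing $z$ contains $\Delta-1$ whole legs. Hence it contains about $(1-1/\Delta)^2n\ge 9n/16$ vertices of that class. All of them would have to land in a single one of the four sets, which is impossible.

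\textbf{Several crossings are therefore unavoidable, and your fallback is the missing argument.} ``Up to $\Delta$ cross edges'' leaves three things unaddressed. You need a splitting of $T$ that keeps all four class counts below about $(1-\frac1{4\Delta})\frac n2$. Each crossing edge must have its endpoint from the class sent to $U_1\cup V_1$ on the $U$-side. And you need a way to land the cut endpoints on unused cross edges, which may all sit at as few as $C$ vertices of $U_1$.

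The paper does this robustly in two parts.
\begin{enumerate}
\item \emph{Balance.} It splits $T$ via Lemma~\ref{vtx_disjoint_beta_decomp} into $s\le 2\Delta/\beta$ pieces of size at most $\beta n$ with $10$-independent roots. Each piece goes to the $U$- or $V$-side by an independent fair coin. Lemma~\ref{lemma:mcdiarmid} then ensures $U_1$ and $V_1$ each receive at most $\frac n2-\frac n{8\Delta}$ vertices, and $U_2$ and $V_2$ at most $0.26n$. So the bulk of every piece embeds greedily, and no absorption is needed.
\item \emph{Crossing.} It builds a gadget from $G[U_1,V_2]$:
\begin{itemize}
\item sets $X_1\subset U_1$ and $Y_2\subset V_2$ with $\delta(G[X_1,Y_2])\ge C/(400\Delta)$;
\item sets $X_2\subset U_2$ and $Y_1\subset V_1$ of vertices with $\Omega(C/\Delta)$ neighbours in $X_1$ and $Y_2$ respectively;
\item sets $X_i',Y_i'$ of size $\Theta(\mu n)$ in which every vertex of the opposite part has at least $4\mu n$ neighbours.
\end{itemize}
The first few levels of each side-switching piece are routed through these sets. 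At most $s(1+\Delta+\dots+\Delta^4)\ll C/\Delta$ vertices ever use the gadget, so it never runs out.
\end{enumerate}
This random many-piece assignment, combined with the bridging gadget, is the idea your proposal lacks.
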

\begin{proof}

%Fix some $\beta > 2\Delta ^2/n$, and let $p$ be such that $1/\Delta \gg p \gg 1/\beta $. 
Throughout the proof we will assume that $\Delta\ge 3$, as when $\Delta=2$ the tree $T$ is an $n$-vertex path, in which case we can embed $T$ greedily. 

\begin{claim}There are sets $X_1\subset U_1$ and $Y_2\subset V_2$ such that the following properties hold.
\begin{enumerate}[label=\upshape{(\alph{enumi})}]
    \item $|X_1|,|Y_2|\le \frac{n}{100\Delta}.$
    \item $\delta(G[X_1,Y_2])\ge \frac{C}{400\Delta}$. 
    \item For all but at most $n/4$ vertices $x\in U_2$, $d(x,X_1)\ge C/(20000\Delta)$.
    \item For all but at most $n/4$ vertices $x\in V_1$, $d(x,Y_2)\ge C/(20000\Delta)$.
\end{enumerate}
    
\end{claim}
\begin{proof}[Proof of Claim]
Let $H\subset G[U_1,V_2]$ be a subgraph of minimum degree at least $C/2$ and let $A=V(H)\cap U_1$ and $B=V(H)\cap V_2$. Form subsets $B_p \subset B$ and $A_p\subset A$ by picking vertices independently at random with probability $p=1/200\Delta$. Fix $x\in A$. Then,
$\EE[d(x, B_p)] \ge Cp/2$ and hence, by Lemma~\ref{lemma:chernoff}, we have $\PP(d(x, B_p) \le Cp/4) \le 2e^{-p C/24}$. Let $Z_A  = \{x \in A_p:d(x, B_p) \le Cp/4\}$ and note that $\EE[|Z_A|] \le 2|A|e^{-pC/24}.$ Then, by Markov's inequality, we have
\[\mathbb P\left(|Z_A|\ge 100|A|e^{-pC/24}\right)\le 1/100,\]
Similar calculation shows that for $Z_B  = \{x\in B:d(x, A_p) \le Cp/4\}$, we have $|Z_B|\le 100|B|e^{-pC/24}$ with probability more than $99/100$. Therefore, with probability more than $98/100$ we have that $|Z_A|\le1 00|A|e^{-pC/24}$ and $|Z_B|\le 100|B|e^{-pC/24}$. Moreover, Lemma~\ref{lemma:chernoff} implies that $|A_p|=(1\pm0.1)p|A|$ with probability at least $1-\exp(-\Theta(p|A|))> 0.99$, as $p|A|\ge pC/2$ and $C\gg \Delta$. Similarly, with probability more than $0.99$ we have $|B_p|=(1\pm 0.1)p|B|$. We therefore have with probability at least 0.95
\begin{itemize}
\item$|A_p|=(1\pm0.1)p|A|$ and $|Z_A|\le 100|A|e^{-pC/24}$, and 
\item $|B_p|=(1\pm0.1)p|B|$ and $|Z_B|\le 100|B|e^{-pC/24}.$
\end{itemize}
First, we note that for any such choice of $A_p$ and $B_p$, we have $d(G[A_p,B_p])\ge Cp/8$. Indeed, we have that 
\[2e(A_p,B_p)\ge \sum_{x\in A_p\setminus Z_A}d(x,B_p)+\sum_{y\in B_p\setminus Z_B}d(x,A_p)\ge \frac{Cp}{8}(|A_p|+|B_p|),\]
as $|Z_A|\le 100|A|e^{-pC/24}\le 0.01p|A|\le 0.01|A_p|$ provided $C\gg \Delta$ and similarly for $Z_B$. Let $H'\subset G[A_p,B_p]$ be a subgraph with $\delta(H')\ge Cp/16$ and let $A'=V(H')\cap U_1$ and $B'=V(H')\cap V_2$.
Let $W_U=\{x\in U_2:d(x,A')\le Cp/1000\}$ and $W_V=\{x\in V_1:d(x,B')\le Cp/1000\}$. By double-counting, we have 
\[(|U_2|/2-2\mu n)|A'|\le e(A,U_2)\le |W_U|\cdot\frac{Cp}{1000}+(|U_2|-|W_U|)|A'|,\]
which gives $|W_U|\le \frac{|A'|}{|A'|-Cp/1000}(|U_2|/2 + 2\mu n)\le 6|U_2|/10$, where we used that $|A'|\ge \delta(H')\ge Cp/16$. The same argument shows that $|W_V|\le 6|V_1|/10$, which finishes the proof by setting $X_1=A'$ and $Y_2=B'$.

%We also have that for each $x \in U_2 \setminus W_U$, $\EE[d(x,A_p)] \ge pC/10$, and thus by Lemma~\ref{lemma:chernoff} we have $\PP(d(x, A_p) < pC/20) < 2 e^{-pC/24}$. Letting $S_A = \{x \in U_2 : d(x, A_p) \le C/(4000\Delta)\}$, we have $\EE[|S_A|] < 2|U_2| e^{-pC/24}$. Again, we can use Markov's inequality and the fact that $pC \gg 1$ to obtain that 
%\[\PP(|\{x \in U_2 : d(x, A_p) \le C/(4000\Delta)\}| \ge n/4) < 1/100.\]
%Hence, by~\eqref{Ap:prop1}, with probability at least 0.9 all but at most $n/4$ vertices of $U_1$ have degree at least $C/5000$ into $A_p \setminus Z_A$.

% A similar argument shows that with probability at least $0.9$ all but at most $n/4$ vertices of $V_1$ satisfy $\EE[d(x,B_p\setminus Z_B)] \ge C/5000$. Recalling~\eqref{Ap:prop1} and~\eqref{Bp:prop1}, there is some choice of $A_p$ and $B_p$ which give the claim by setting $X_1 = A_p \setminus Z_A$ and $Y_2 = B_p \setminus Z_B$.

\end{proof}

\begin{claim}There are subsets $X_2,X_1',X_2'$ and $Y_1,Y_1',Y_2'$, all pairwise disjoint and disjoint from $X_1$ and $Y_2$, such that $X_2 \subset \{x \in U_2\setminus U_2': d(x, X_1) > C/20000\Delta\}$ and $Y_1 \subset  \{x \in V_1\setminus V_1': d(x, Y_2) > C/20000\Delta\}$, $X'_i\subset U_i\setminus U_i'$, $Y_i'\subset V_i\setminus V_i'$ for $i\in [2]$, and the following properties hold. 
\begin{enumerate}[label=\upshape(\alph{enumi})]
    \item $10\mu n\le |X_2|,|X_1'|,|X'_2|\le 100\mu n$ and $10\mu n\le|Y_1|,|Y'_1|,|Y_2'|\le 100\mu n$.
      \item For $i\in [2]$ and for all $x\in U_i$, $d(x,X_{3-i}')\ge 4\mu n$.
      \item For $i\in [2]$ and for all $x\in V_i$, $d(x,Y_{3-i}')\ge 4\mu n$.
   
\end{enumerate}
\end{claim}
\begin{proof}Form $X_1'$ by including each vertex $u\in U_1\setminus (X_1\cup U_1')$ independently with probability $50\mu $, and form $X_2'\subset U_2$ by including each vertex $u\in U_2$ independently with probability $50\mu $.  By Lemma~\ref{lemma:chernoff}, with probability more than 0.9 we have that $10\mu n\le |X_1'|,|X'_2|\le 100\mu n$. Fix $i\in [2]$ and $x\in U_i$. Noting that $\mathbb E[d(x,X_{3-i}')]\ge (50\mu) \cdot n/10\ge 5\mu n$, Lemma~\ref{lemma:chernoff} implies that $d(x,X_{3-i}')\ge 4\mu n$ with probability $1-e^{-\Theta(n)}$. Thus, by an union bound we have that with probability at least 0.9, $d(x,X_{3-i}')\ge 4\mu n $ for all $x\in U_i$. Finally, take any set $X_2\subset \{x \in U_2\setminus U_2': d(x, X_1) > C/20000\Delta\}\setminus X_2'$ of size $|X_2|=10\mu n$, which exists as at most $n/4$ vertices in $U_2$ have low degree to $X_1$ and $|U_2'|\le \mu n/2$. The sets $Y_1,Y_1',Y_2'$ can be found analogously.
    
\end{proof}

We  now introduce an auxiliary constant $\beta>0$ such that $ \mu\ll1/C\ll\beta\ll 1/\Delta$. Let $T_1,\ldots, T_s$ be a vertex-disjoint collection of subtrees of $T$ with roots $R =\{r_1, \ldots, r_s\}$ as given by Lemma~\ref{vtx_disjoint_beta_decomp}, such that $R$ is $10$-independent in $T$ and $s \le 2\Delta\beta^{-1}$. Furthermore, suppose that $T_1,\ldots, T_s$ are ordered so that for each $i\ge2$, the parent of $r_i$ is contained in $T_{i-1}$. Let $P = \{p_2, \ldots, p_{s}\}$ be the set of parents of each $r_i$ in $T$, and $Q$ be the set of parents of the elements of $P$. We now describe how we will assign these pieces to $G$.

Let $V(T)=A_1\cup A_2$ be the bipartition classes of $T$ and assume without loss of generality that $|A_1|\ge\lfloor n/2\rfloor\ge |A_2|$. For each $i \in [s]$, we define a random variable $\eps_i \in \{0, 1\}$ chosen uniformly at random, and independently of each other. Now, for each $i \in [s]$ in turn, we define an embedding $\psi_i : T_1 \cup \ldots \cup T_i \to G$ (if possible), satisfying the following rules. We state the rules in the case that $r_i\in A_j$ for $j=1$, the other case is analogous.
\stepcounter{propcounter}
\begin{enumerate}
[label=\upshape{\textbf{\Alph{propcounter}\arabic{enumi}}}]
\item\label{rule:sides} Every vertex in $A_1$ is mapped to $U_1\cup V_1$ and every vertex in $A_2$ is mapped to $U_2\cup V_2$. Moreover, every vertex in $P$ is mapped to $(X_1'\cup X_2')\cup (Y_1'\cup Y_2')$. 
    \item\label{rule:eps=0:1} If $\eps_i=0$ and $\psi_i(p_i)\in U_1\cup U_2$, then $r_i$ is embedded to $N(\psi_{i-1}(p_{i}))~\cap~X'_1$ and then every vertex in $T_i-P$ is greedily embedded in $G[U_1\setminus (U_1'\cup X_1\cup X_1'),U_2\setminus (U'_2\cup X_2\cup X_2')]$  and vertices in $V(T_i)\cap P$ are mapped to either $X_1'$ or $X_2'$, accordingly.  
    \item\label{rule:eps=0:2} If $\eps_i=0$ and $\psi_i(p_i)\in V_1\cup V_2$, then $r_i$ is embedded to $N(\psi_{i-1}(p_{i}))~\cap~Y_1$. We embed $N_{T_i}(r_i)$ to $Y_2$, $N^2_{T_i}(r_i)$ to $X_1$, and $N^3_{T_i}(r_i)$ to $X_2'$. Then every other vertex of $T_i-P$ is  greedily embedded in $G[U_1\setminus (U_1'\cup X_1\cup X_1'),U_2\setminus (U_2'\cup X_2\cup X_2')]$ and vertices in $V(T_i)\cap P$ are mapped to either $X_1'$ or $X_2'$, accordingly.  
     \item\label{rule:eps=1:1} If $\eps_i=1$ and $\psi_i(p_i)\in V_1\cup V_2$, then $r_i$ is embedded to $N(\psi_{i-1}(p_{i}))~\cap~Y'_1$ and then every vertex in $T_i-P$ is greedily embedded in $G[V_1\setminus (V_1'\cup Y_1\cup Y_1'),V_2\setminus (V_2'\cup Y_2\cup Y_2')]$ and vertices in $V(T_i)\cap P$ are mapped to either $Y_1'$ or $Y_2'$ accordingly.  
    \item\label{rule:eps=1:2} If $\eps_i=1$ and $\psi_i(p_i)\in U_1\cup U_2$, then $r_i$ is embedded to $N(\psi_{i-1}(p_{i}))~\cap~X'_1$. We embed $N_{T_i}(r_i)$ to $X_2$, $N^2_{T_i}(r_i)$ to $X_1$, $N^3_{T_i}(r_i)$ to $Y_2$ and $N^4_{T_i}(r_i)$ to $Y_1'$. Then every other vertex of $T_i-P$ is  greedily embedded in $G[V_1\setminus (V_1'\cup Y_1\cup Y_1'),V_2\setminus (V_2'\cup Y_2\cup Y_2')]$ and vertices in $V(T_i)\cap P$ are mapped to either $Y_1'$ or $Y_2'$, accordingly.  
\end{enumerate}
For each $i \in [s]$, define the following random variables $M_i = |V(T_i) \cap A_1| \cdot (1-\eps_i)$, $W_i = |V(T_i) \cap A_2| \cdot (1- \eps_i)$, $M'_i = |V(T_i) \cap A_1| \cdot \eps_i$, and  $W'_i = |V(T_i) \cap A_2| \cdot \eps_i$. Let $M = \sum_{i \in [s]}M_i$, $W = \sum_{i \in [s]}W_i$, $M' = \sum_{i \in [s]}M'_i$, and $W' = \sum_{i \in [s]}W'_i$. Note that $\EE[M_i] = \frac{1}{2}|V(T_i) \cap A_1|$ and $\EE[M] = \frac{1}{2}|A_1| \le \frac{\Delta-1}{\Delta}\frac{n}{2} + \frac{1}{2 \Delta} \le (1-\frac{1}{2\Delta})\frac{n}{2}$. Note that changing the value of one $\eps_i$ changes the value of $M=M(\eps_1, \ldots, \eps_s)$ by at most $|V(T_i) \cap A_1| \le \beta n$. Also, $\sum_{i \in [s]}|T_i|^2 \le \beta n\sum_{i \in [s]}|T_i| \le \beta n^2$. Thus, by McDiarmid's inequality, 
\[
\PP\left(M > \left(1 - \frac{1}{4\Delta}\right)\frac{n}{2}\right) \le 2\exp\left(-\frac{n^2}{64\Delta^2\beta n^2}\right)< 1/4,
\]
as $\beta \ll 1/\Delta$. Likewise,  we have $\PP\left(M' > \left(1 - \frac{1}{4\Delta}\right)\frac{n}{2}\right)< 1/4$ and $\PP~\big(W > 0.26n  \big),\PP~\big(W' > 0.26n\big)~<~1/4$. 

Therefore, we may fix a choice of $(\eps_1, \ldots, \eps_s)$ for which $M, M' \le  \frac{n}{2} - \frac n {8\Delta} $ and $W,W'\le 0.26n$. We now show that the embedding $\psi_i$ can be successfully extended for each $i$ for this choice of $(\eps_1, \ldots, \eps_s)$. First, note $T_1$ can easily be embedded satisfying \ref{rule:sides}--\ref{rule:eps=1:2}. 

Now let $i \in [s]$ and consider $\psi_{i+1}$. Assume $\eps_{i+1} = 1$ and $r_i\in A_1$, as the other case is the symmetric. Assume that $\psi_i(p_i)\in U_1\cup U_2$, in which case, as $r_i\in A_1$, \ref{rule:sides} implies $\psi_i(p_i)\in X_2'$. The key observation here is that for every $i\in [s]$ we have that
\begin{equation}\label{embedding:space:constraint}|\psi_i^{-1}(X_1\cup X'_1\cup X_2\cup X_2'\cup Y_1\cup Y_1'\cup Y_2\cup Y_2')|\le s\cdot (1+\Delta+\Delta^2+\Delta^3+\Delta^4)\le 10\Delta^6\beta^{-1}\le \frac{C}{10^{10}\Delta}, \end{equation}
as $1/C\ll \beta,1/\Delta$.
Then, as $\psi_i(p_i)\in X_2'$ has at least $4\mu n$ neighbours in $X_1'$,~\eqref{embedding:space:constraint} implies we can safely embed $r_{i+1}$ to some $\psi_{i+1}(r_{i+1})\in X_1'$. Now, as $X_1'\subset U_1\setminus U_1'$, $\psi_{i+1}(r_{i+1})$ has at most $3\mu n$ non-neighbours in $U_2$ and thus $d(\psi_{i+1}(r_{i+1}),X_2)\ge |X_2|-3\mu n\ge 7\mu n$. Because of~\eqref{embedding:space:constraint}, we then can greedily embed every vertex in $N_{T_{i+1}}(r_{i+1})$ into $X_2$. The condition on $X_2$ implies every vertex in $\psi_{i+1}(N_{T_{i+1}}(r_{i+1}))$ has at least $C/(20000\Delta)$ neighbours in $X_1$, thus~\eqref{embedding:space:constraint} implies we have enough room to embed $N^2_{T_{i+1}}(r_{i+1})$ into $X_1$. Again, the minimum degree between $X_1$ and $Y_1$ is at least $C/400\Delta$, which implies we can embed $N^3_{T_{i+1}}(r_{i+1})$ to $Y_2$, and then as every vertex in $Y_2$ has at least $4\mu n$ neighbours in $Y_1'$, we can extend the embedding of the first 4 levels of $T_{i+1}$ under the rule~\ref{rule:eps=1:2}. Now remains to embed the rest of $T_{i+1}-P$ greedily in $G[V_1\setminus (V_1'\cup Y_1\cup Y_1'),V_2\setminus (V_2'\cup Y_2\cup Y_2')]$, which is possible as the minimum degree in that graph is at least 
\[(1-2\mu n)\frac{n}{2}-200\mu n-\mu n/2\ge \left(1-\frac{1}{4\Delta}\right)\frac{n}{2}\ge\max\{M',W'\},\]
by the choice of $(\eps_1,\ldots,\eps_s)$ and using that $\mu\ll 1/\Delta$. Finally, as every vertex in $V_1$ (resp. $V_2$) has at least $4\mu n$ neighbours in $Y_2'$ (resp. $Y_1'$), we can embed the vertices in $P\cap V(T_{i+1})$ under rule~\ref{rule:eps=1:2}. The other cases are completely analogous.

As~\eqref{embedding:space:constraint} holds for every $i\in [s]$, we can find an embedding $\psi_s:T\to G$ under rules~\ref{rule:sides}--\ref{rule:eps=1:2}, finishing the proof.
\end{proof}

\section{Proof of the main theorems}\label{sec:final}
In this section, we put everything together to prove Theorems~\ref{thm:main},~\ref{thm:main3} and~\ref{thm:main2}. We first need two results that will allow us to clean up the structure given by Theorems~\ref{thm:stability} and~\ref{thm:stability:unbounded}. After cleaning up the partitions given by~Theorems~\ref{thm:stability} and~\ref{thm:stability:unbounded}, we can directly use the results from Section~\ref{sec:extremal} to embed a tree in one of the colours.
% \begin{proposition}\label{lem:emb:extremal:1}Let $1/n\ll \alpha\ll \mu\ll \beta\le 1$ and let $T$ be an $n$-vertex tree with $\Delta(T)\le n^\alpha$. Suppose $G$ is a graph with at most $4n$ vertices that contains pairwise disjoint sets $U_1,U_2,V_1,V_2$ such that 
% \begin{enumerate}[label=\upshape{(\roman{enumi})}]
%     \item $|U_i|,|V_i|\ge (1-\mu)n/2$ for $i\in [2]$,
%     \item $\delta(G[U_i,V_i])\ge (1-\mu)\min\{|U_i|,|V_i|\}$, and 
%     \item $e(U_1)\ge 100\beta n^2$.
% \end{enumerate}
% If there is some $x_0\in V(G)$ such that $d(x_0,U_i\cup V_i)\ge 1000\Delta(T)$ for $i\in [2]$, then $G$ contains a copy of $T$. 
% \end{proposition}
% \begin{proof}
% Apply Lemma~\ref{lem:cutvertex:2} with $\gamma=\frac{1}{100}$ to get a vertex $z\in V(T)$ and a subtree $T'\subset T$ such that 
% \begin{itemize}
%     \item $n/100\le |T'|\le n/50$, and
%     \item $T$ and $T'$ intersect exactly at $z$. 
% \end{itemize}
% First, embed $T'$ into $U_2\cup V_2$ with $z$ copied to $x_0$. Then, use Lemma~\ref{lem:emb:extremal:0} to embed $T-(T'-z)$ into $U_1\cup V_1$ with $z$ copied to $x_0$. 
% \end{proof}

\begin{proposition}\label{lem:emb:extremal:1}Let $1/n\ll \alpha \ll \mu\ll \beta \le 1$ and let $T$ be an $n$-vertex tree with $\Delta(T)\le n^\alpha$. Suppose $G$ is a graph with at most $4n$ vertices that contains pairwise disjoint sets $U_1,U_2,V_1,V_2$ such that 
\begin{enumerate}[label=\upshape{(\roman{enumi})}]
    \item $|U_i|,|V_i|\ge (1-\mu)n/2$ for $i\in [2]$,
    \item $\delta(G[U_i,V_i])\ge (1-\mu)\min\{|U_i|,|V_i|\}$ for $i \in [2]$, and 
    \item $e(U_1)\ge 100\beta n^2$.
\end{enumerate}
If there is some $x_0\in V(G)$ such that we have $d(x_0,U_1\cup V_1)\ge \Delta(T)$ and $d(x_0,U_{2}\cup V_{2})\ge 1$, then $G$ contains a copy of $T$. 
\end{proposition}
\begin{proof}
Apply Lemma~\ref{lem:cutedge:0} with $\gamma=\frac{1}{100}$ to get an edge $zy\in E(T)$ and a subtree $T'\subset T$ with $y \in T'$ such that 
\begin{itemize}
    \item $n/100\Delta(T) \le |T'|\le n/100$, and
    \item $T'$ is a component of $T-e$. 
\end{itemize}
First, embed $z$ to $x_0$ and $y$ to the neighbour of $z$ in $U_2 \cup V_2$, and greedily complete the embedding of $T'$ into $U_2\cup V_2$. Then, use Lemma~\ref{lem:emb:extremal:0} to embed $T-T'$ into $U_1\cup V_1$ with $z$ copied to $x_0$. 
\end{proof}

\begin{proposition}\label{lem:emb:extremal:2}Let $1/n\ll \mu\ll  \Delta$ and let $T$ be an $n$-vertex tree with $\Delta(T)\le \Delta$. Suppose $G$ is a graph with at most $4n$ vertices that contains pairwise disjoint sets $U_1,U_2,V_1,V_2$ such that 
\begin{enumerate}[label=\upshape{(\roman{enumi})}]
    \item $|U_i|,|V_i|\ge (1-\mu)n/2$ for $i\in [2]$, and
    \item $\delta(G[U_i,V_i])\ge (1-\mu)\min\{|U_i|,|V_i|\}$.
\end{enumerate}
If there is some $x_0\in V(G)$ such that $d(x_0,U_1\cup V_1 \cup U_2)\ge 100\Delta(T)$ then $G$ contains a copy of $T$. 
\end{proposition}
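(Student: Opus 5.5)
The plan is to split $T$ into two pieces joined at a single vertex that will be mapped to $x_0$, and to embed the two pieces greedily into the two almost complete bipartite graphs $G[U_1,V_1]$ and $G[U_2,V_2]$. Since $x_0$ has at least $100\Delta$ neighbours in $U_1\cup V_1\cup U_2$, either $d(x_0,U_1\cup V_1)\ge 50\Delta$ or $d(x_0,U_2)\ge 50\Delta$. So $x_0$ has many neighbours in at least one of the two bipartite graphs. The difficulty is that it may have none in the other.

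Apply Lemma~\ref{lem:cutVertex} (or Corollary~\ref{cor:t/2_forest}) to find a vertex $z$ such that every component of $T-z$ has at most $\lceil (n-1)/2\rceil$ vertices. Then use Lemma~\ref{lem:numberPartition} to group the at most $\Delta$ components into two forests $F_1,F_2$ with $T-z=F_1\cup F_2$ and $|F_1|,|F_2|\le$ roughly $n/2$ (plus the possible third small group, which is absorbed into the smaller one). Map $z$ to $x_0$.

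Consider first the case $d(x_0,U_1\cup V_1)\ge 50\Delta$ and $d(x_0,U_2)\ge 50\Delta$. For $i=1,2$, embed the roots of the components in $F_i$ into distinct neighbours of $x_0$ in the $i$-th bipartite graph. Then extend greedily inside $G[U_i,V_i]$. This works because $G[U_i,V_i]$ has minimum degree at least $(1-\mu)^2n/2$. Each side class of $F_i$ has size at most $|F_i|$, and Fact~\ref{fact:BddDegTreeBipartition} gives a linear slack: the larger class has size at most $(1-1/\Delta)|F_i|+1\le (1-1/(2\Delta))n/2$, which beats $\mu n$ since $\mu\ll 1/\Delta$. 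Roots landing on either side is harmless, because we only need each side of $G[U_i,V_i]$ to absorb one class of $F_i$.

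The main obstacle is when $x_0$ has many neighbours in only one of the two bipartite graphs. Say all of its $\ge 100\Delta$ neighbours lie in $U_1\cup V_1$ (the case $U_2$ is symmetric), and then the whole of $T$ must essentially go into $G[U_1,V_1]$ together with $x_0$. In that case I would use the bipartite structure. $G[U_1,V_1]$ has about $n$ vertices but perhaps fewer than $n-1$, so the tree cannot fit there. I expect the intended extra ingredient is that some vertex of $U_1\cup V_1$ has a neighbour in $U_2\cup V_2$ (or at least that $x_0$ itself does), so as to reduce to Proposition~\ref{lem:emb:extremal:1}-type splitting with a cut edge. Following that proof, use Lemma~\ref{lem:cutedge:0} with $\gamma=1/100$ to find an edge $zy$ cutting off a component $T'$ of size between $n/(100\Delta)$ and $n/100$. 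Put $T-T'$ on the side where $x_0$ has many neighbours, via greedy embedding with $z\mapsto x_0$. Put $T'$ on the other side, via the edge from $x_0$ to $U_2$ if there is one. Here the counting must be done carefully: $|T-T'|\le (1-1/(100\Delta))n$, so its larger class fits into $(1-\mu)n/2$-degree sides provided its classes are at most $(1-\mu)n/2$ each. I would choose the cut so that both classes of $T-T'$ stay below $n/2-n/(400\Delta)$, using the balanced split of Corollary~\ref{cor:t/2_forest}. Verifying this balance, and handling the degenerate subcase where $x_0$ has neighbours only in $U_1\cup V_1$, is where I expect the real work to lie.
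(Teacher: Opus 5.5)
There is a genuine gap, in two places.

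\textbf{The grouping step.} Merging $I_3$ into $I_2$ in Lemma~\ref{lem:numberPartition} does not give two forests of order roughly $n/2$. Indeed $|F_2|=n-1-|F_1|$ can be about $2n/3$. For example, let $z$ be a centroid with three branches of order about $n/3$, each a complete $(\Delta-1)$-ary tree of even depth. Then the class containing the branch roots has about a $(1-1/\Delta)$-fraction of each branch. Your claim that ``roots landing on either side is harmless'' then fails. If all neighbours of $x_0$ in the pair receiving $F_2$ lie on one side, the root-parity class of $F_2$ has order about $(1-1/\Delta)2n/3>n/2$, but it must fit into a side of order about $n/2$. This is exactly why Corollary~\ref{cor:t/2_forest} bounds only $|F_1|\le\lceil n/2\rceil$. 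It controls $F_2$ only through a balanced proper colouring, obtained by pairing the larger class of one group of branches with the smaller class of the other. That colouring may put roots of different branches on opposite sides, so $F_2$ must go into a pair in which $x_0$ has neighbours on \emph{both} sides.

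\textbf{The hypothesis.} It cannot be used in the form you read it. With only $100\Delta$ neighbours of $x_0$ in the union $U_1\cup V_1\cup U_2$, the statement is false. Take two disjoint copies of $K_{a,a}$ with $a=\lceil(1-\mu)n/2\rceil$, plus $x_0$ joined to $100\Delta$ vertices of $U_1$. This graph has no component of order $n$, so your degenerate subcase is hopeless. There is no edge from $x_0$ to $U_2\cup V_2$ to use, and in any case $T-T'$ need not have both classes below $n/2$ when $T$ is unbalanced. Your first case fails too for $\Delta\ge 5$. Join $x_0$ instead to $50\Delta$ vertices of $U_1$ and $50\Delta$ of $U_2$, and take $T$ as above. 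Any copy of $T$ must use $x_0$ as a cut vertex. Then two whole branches land in the same $K_{a,a}$ with their root classes on the same side, which is too many vertices for that side.

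\textbf{The fix.} The missing ingredient is the form in which the proposition is actually applied, in Case~2 of the proof of Theorem~\ref{thm:main}: $x_0$ has at least $100\Delta$ neighbours in \emph{each} of $U_1$, $V_1$ and $U_2$. With that hypothesis the paper's argument works:
\begin{enumerate}
\item Map $z\mapsto x_0$.
\item Embed $F_1$ greedily into $G[U_2,V_2]$ with all roots in $U_2$. This works because each class of $F_1$ has at most $(1-1/\Delta)\lceil n/2\rceil+1<(1-2\mu)n/2$ vertices.
\item Embed $F_2$ greedily into $G[U_1,V_1]$, placing each root in $U_1$ or $V_1$ as dictated by the balanced colouring.
\end{enumerate}
The paper's one-line proof names the two pairs the other way round, but it is $F_2$ that needs the pair where $x_0$ sees both sides.
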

\begin{proof}
    By \Cref{cor:t/2_forest} there is some vertex $z\in V(T)$ such that $T-z = F_1 \cup F_2$, where $F_1$ and $F_2$ are forests such that $F_1$ has at most $\lceil n/2 \rceil$ vertices and $F_2$ has a proper two-colouring such that the largest colour class has order at most $\frac{\Delta-1}{\Delta} \frac n2 + \frac 1 {\Delta}$. Embed $z$ to $x_0$ and embed $F_1$ and $F_2$ greedily to $U_1 \cup V_1$ and $U_2 \cup V_2$ respectively.
\end{proof}

\begin{proof}[Proof of Theorem~\ref{thm:main}]For $\Delta\ge 2$, we introduce  auxiliary constant $\beta,\mu>0$ so that $1/n\ll\mu\ll\beta\ll 1/\Delta$. Let $G$ be a blue/red coloured graph with $|G|=2n-1$ and $\delta(G)\ge \lfloor3|G|/4\rfloor$. Let $T$ be an $n$-vertex tree with $\Delta(T)\le \Delta$ and suppose $G$ contains no monochromatic copy of $T$. \stepcounter{propcounter}
\begin{claim}There is a partition $V(G)=V_0\cup V_1\cup V_2\cup V_3 \cup V_4$ such that the following properties hold. 
\begin{enumerate}[label=(\roman{enumi})]
    \item $|V_0|\le\mu n$ and $|V_i|=(1\pm\mu)n/2$ for $i\in [4]$.\label{it:V_i_size}
    \item For every $x\in V_{1}\cup V_{2}$ (resp. $x\in V_3\cup V_4$),  $\blue{d}(x,V_3\cup V_4)\le \mu n$ (resp. $\blue{d}(x,V_1\cup V_2)\le \mu n$).\label{it:low_blue_deg}
    \item For every $x\in V_{1}\cup V_{3}$ (resp. $x\in V_2\cup V_4$),  $\red{d}(x,V_2\cup V_4)\le \mu n$ (resp. $\red{d}(x,V_1\cup V_3)\le \mu n$).\label{it:low_red_deg} 
\end{enumerate}
    \end{claim}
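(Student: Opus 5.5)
We obtain the partition by applying Theorem~\ref{thm:stability} and then moving the few badly behaved vertices into $V_0$. Since $|G|=2n-1\ge 2n-2$ and $G$ has no monochromatic copy of $T$, Theorem~\ref{thm:stability} applied with a parameter $\mu'$ satisfying $1/n\ll\mu'\ll\mu$ gives, after possibly swapping colours, a partition $V(G)=U_0\cup U_1\cup U_2\cup U_3\cup U_4$. It satisfies $|U_i|\ge (1-\mu')n/2$ for $i\in[4]$, $\blue{e}(U_1\cup U_2,U_3\cup U_4)\le \mu' n^2$ and $\red{e}(U_1\cup U_3,U_2\cup U_4)\le \mu' n^2$. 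Because $|G|=2n-1$ and the four sets each have size at least $(1-\mu')n/2$, we get $|U_0|\le 2\mu' n$ and $|U_i|\le (1+4\mu')n/2$.

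Next we remove the vertices of large crossing degree. Let $B\subset U_1\cup\dots\cup U_4$ be the set of vertices $x\in U_i$ with either $\blue{d}(x,\cdot)>\mu n/2$ into the opposite blue side, or $\red{d}(x,\cdot)>\mu n/2$ into the opposite red side. Here the opposite blue side is $U_3\cup U_4$ if $i\in\{1,2\}$ and $U_1\cup U_2$ otherwise; the opposite red side is $U_2\cup U_4$ if $i\in\{1,3\}$ and $U_1\cup U_3$ otherwise. Each such vertex contributes more than $\mu n/2$ edges to one of two edge sets, each of size at most $\mu' n^2$. Double counting therefore gives $|B|\le 8\mu' n^2/(\mu n)=8(\mu'/\mu)n\le \mu n/4$, since $\mu'\ll\mu$.

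We then set $V_i=U_i\setminus B$ for $i\in[4]$ and $V_0=U_0\cup B$. This gives $|V_0|\le 2\mu' n+\mu n/4\le \mu n$. Moreover $|V_i|\ge (1-\mu')n/2-\mu n/4\ge (1-\mu)n/2$ and $|V_i|\le (1+4\mu')n/2\le (1+\mu)n/2$, which is \ref{it:V_i_size}. Shrinking the sets only decreases the relevant degrees, so every $x\in V_1\cup V_2$ satisfies $\blue{d}(x,V_3\cup V_4)\le \blue{d}(x,U_3\cup U_4)\le \mu n/2$. The analogous bounds hold for the other three statements, which gives \ref{it:low_blue_deg} and \ref{it:low_red_deg}.

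The only point needing care is the hierarchy of constants. The stability parameter must be taken much smaller than $\mu$ so that the Markov-type bound on $|B|$ is at most $\mu n/4$, and Theorem~\ref{thm:stability} only needs $1/n\ll\mu'\ll1/\Delta$, which this choice respects. We do not need to reassign vertices of $V_0$ at this stage, because the claim allows $V_0$ to be an arbitrary leftover set of size at most $\mu n$.
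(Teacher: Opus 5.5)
Your proof is correct and is essentially the paper's argument: apply Theorem~\ref{thm:stability} with a parameter $\mu'\ll\mu$, then move every vertex with large wrong-colour crossing degree into $V_0$. Your double-counting bound $|B|\le 8(\mu'/\mu)n$ is more careful than the paper's, which writes $|S_i|\le \mu' n^2/(\mu n)<2\mu' n$; that last inequality fails for $\mu<1/2$, though the slip is harmless because $\mu'\ll\mu$.
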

    \begin{proof}[Proof of claim]We introduce an auxiliary constant $\mu'>0$ so that $1/n\ll\mu'\ll \mu$. First, we use Theorem~\ref{thm:stability} to get a partition $V(G)=U_0\cup U_1\cup U_2\cup U_3\cup U_4$ such that 
\begin{itemize}
  \item $|U_i|\ge (1-\mu')n/2$ for all $i\in [4]$,
    \item $\blue{e}(U_1\cup U_2,U_3\cup U_4)\le \mu' n^2$, and 
    \item $\red{e}(U_1\cup U_3, U_2\cup U_4)\le \mu' n^2$. 
\end{itemize}

Let $S_1 = \{x \in U_1 \cup U_2: \blue{d}(x, U_3 \cup U_4) \ge \mu n\}$. The pigeonhole principle implies that $|S_1| \le \frac{\mu'n^2}{\mu n}<2\mu'n$. Likewise, the following sets all have size at most $2\mu'n$:
\begin{align*}
    &S_2 = \{x \in U_3 \cup U_4: \blue{d}(x, U_1 \cup U_2) \ge \mu n\}, 
    \\&S_3 = \{x \in U_1 \cup U_3: \red{d}(x, U_2 \cup U_4) \ge \mu n\} \text{, and} 
    \\&S_4 =  \{x \in U_2 \cup U_4 : \red{d}(x, U_1 \cup U_3) \ge \mu n\}.
\end{align*} 

Let $V_0 = \bigcup_{i \in [4]}S_i \cup U_0$ and note $|V_0| \le 8 \mu'n + 2\mu'n  < \mu n$. For $i \in [4]$, let $V_i = U_i \setminus V_0$ and note $|V_i| = (1 \pm 21\mu')n/2 = (1 \pm \mu)n/2$, so \ref{it:V_i_size} is satisfied. The sets $V_i$ also satisfy \ref{it:low_blue_deg} and \ref{it:low_red_deg} by construction. 

\end{proof}
Note that each $x\in V_i$, $i\in [4]$, satisfies
\begin{equation}
    d(x,V_i)\ge \left\lfloor\frac{3|G|}{4}\right\rfloor-(1+\mu)n-2\mu n-|V_0|\ge (1-10\mu)\frac{n}{2}.
\end{equation}
Therefore, $e(V_1)\ge (1-100\mu)n^2/4$ and thus at $G[V_1]$ has least $(1-100\mu)n^2/8$ edges of the same colour, say red. \\

\noindent \textbf{Case 1.} $\blue{e}(V_i)\ge 100\beta n^2$ for some $i \in [4]$.\\ 

If some vertex $v\in V_0$ sends at least $100\Delta$ red edges to $V_1\cup V_3$ and at least 1 red edge to $V_2\cup V_4$, then \Cref{lem:emb:extremal:1} gives a red copy of $T$, so we assume any vertex of $V_0$ with a red neighbour in $V_2\cup V_4$ has less than $100 \Delta$ red neighbours in $V_1 \cup V_3$. Note that if such a vertex was contained in $V_2 \cup V_4$, then \ref{it:low_red_deg} would still hold. Likewise, if any vertex $v \in V_0$ has $100 \Delta$ blue neighbours in $V_1 \cup V_2$ (resp. $V_3 \cup V_4$), then we may assume it has at most $100 \Delta$ blue neighbours in $V_3 \cup V_4$ (resp. $V_1 \cup V_2$) and if it was contained in $V_1 \cup V_2$ (resp. $V_3 \cup V_4$), then \ref{it:low_blue_deg} would still hold. 

This reasoning allows us to assume that either $G$ contains a monochromatic copy of $T$, or there exists a partition $V(G) = V_1'\cup  V_2'\cup V_3'\cup V_4'$ satisfying \ref{it:low_blue_deg} and \ref{it:low_red_deg}, and $|V_i'| = (1 \pm 3\mu)n/2$ for $i \in [4]$. To be precise, each $v \in V(G)$ satisfies one of the following:
\begin{itemize}
    \item $v$ has a red neighbour in $V_2 \cup V_4$ and at least $100 \Delta $ blue neighbours in $V_1 \cup V_2$. Then $v \in V_2'$, or
    \item $v$ has a red neighbour in $V_2 \cup V_4$ and less than $100 \Delta$ blue neighbours in $V_1 \cup V_2$. Then $v \in V_4'$, or
    \item $v$ has no red neighbours in $V_2 \cup V_4$ and at least $100 \Delta$ blue neighbours in $V_1 \cup V_2$. Then $v \in V_1'$, or
    \item $v$ has no red neighbours in $V_2 \cup V_4$ and less than $100 \Delta$ blue neighbours in $V_1 \cup V_2$. Then $v \in V_3'$.
\end{itemize}
As $V_1 \se V_1'$, we have that $G[V_1']$ has at least $(1-100\mu)n^2/8$ red edges, and  $\blue{e}(V_i')\ge 100\beta n^2$ for some $i \in [4]$, if there is any red edge from $V_1' \cup V_3'$ to $V_2' \cup V_4'$ or a blue edge from $V_1' \cup V_2'$ to $V_3' \cup V_4'$, then we can conclude by \Cref{lem:emb:extremal:1}.

By the order of $G$, there is some $j \in [4]$ such that $|V_j'| \ge \lfloor n/2 \rfloor$. We assume $j = 4$. Then 
\[|G \setminus V_4'| \le 2n-1 - \lfloor n/2 \rfloor = n + \lceil n/2 \rceil -1.\]
Thus, let $v \in V_1'$ and note $v$ does not have an edge to itself, by the minimum degree condition we have $d(v, V_4) \ge \lfloor 3|G|/4 \rfloor - (n + \lceil n/2 \rceil -2) > 0$. Thus there is some edge between $V_1'$ and $V_4'$, which either acts as a red edge between $V_1' \cup V_3'$ and $V_2'\cup V_4'$ or a blue edge between $V_1' \cup V_2'$ and $V_3' \cup V_4'$, and we conclude by \Cref{lem:emb:extremal:1}.\\

% If some vertex $v\in V_0$ has at least $100\Delta(T)$ red neighbours in both $V_1\cup V_3$ and $V_2\cup V_4$, then Lemma~\ref{lem:emb:extremal:1} gives a red copy of $T$. Thus, we may partition $V_0=V_{1,3}\cup V_{2,4}$ so that every $x\in V_{1,3}$ satisfies $\red{d}(x,V_2\cup V_4\cup V_{2,4})\le 3\mu n$, and every $x\in V_{2,4}$ satisfies $\red{d}(x,V_1\cup V_3\cup V_{1,3})\le 3\mu n$. 

% \textcolor{red}{Note that as $|G|=2n-1$, either $|V_1\cup V_3\cup V_{13}|\ge n$ or $|V_2\cup V_4\cup V_{14}|\ge n$, so let us assume without of generality that $|V_1\cup V_3\cup V_{13}|\ge n$. Cover $V_{13}$ using Lemma~\ref{lem:covering:2} and use Lemma~\ref{lem:emb:extremal:2} to complete the embedding of $T$.} \\

\noindent \textbf{Case 2.} $\blue{e}(V_i)< 100\beta n^2$ for all $i \in [4]$.    \\

If some $v \in V_0$ has at least $100 \Delta$ blue neighbours in at least three of $V_1, V_2, V_3$ and $V_4$, then we can embed $T$ using \Cref{lem:emb:extremal:2}. Otherwise, each vertex $v \in V_0$ has 
\begin{equation}\label{eq:vertex_degs}
    \red{d}(x, V_i \cup V_j) \ge \lfloor 3/4|G|\rfloor- (1+\mu)n - 100 \Delta > (1 - 3 \mu)n/2,
\end{equation}
for $(i, j) \in \{(1,3), (2,4)\}$. We partition $V_0 = V_{1,3} \cup V_{2,4}$ by assigning each vertex of $V_0$ to a set $V_{i, j}$ for which \eqref{eq:vertex_degs} holds. We now have a partition $V(G) = V_1 \cup V_3 \cup V_{1,3} \cup V_2 \cup V_4 \cup V_{2,4}$. We let $V'$ be the larger of $V_1 \cup V_3 \cup V_{1,3} $ and $ V_2 \cup V_4 \cup V_{2,4}$. Then $|V'| \ge n$ and $\delta_{red}(V') \ge (1-3\mu)n/2$, and all but at most $\mu n$ vertices $v\in V'$ have 
\[\red{d}(v, V')\ge \lfloor 3n/2 \rfloor -1 - (1 + \mu )n/2-2\mu n \ge (1-6\mu)n.\] Thus we may conclude by Lemma \ref{lem:exact:1}.

%If some vertex $v\in V_0$ has at least $100\Delta(T)$ red neighbours in both $V_1\cup V_3$ and $V_2\cup V_4$, then Lemma~\ref{lem:emb:extremal:1} gives a red copy of $T$. Thus, we may partition $V_0=V_{1,3}\cup V_{2,4}$ so that every $x\in V_{1,3}$ satisfies $\red{d}(x,V_2\cup V_4\cup V_{2,4})\le 3\mu n$, and every $x\in V_{2,4}$ satisfies $\red{d}(x,V_1\cup V_3\cup V_{1,3})\le 3\mu n$. 
\end{proof}

\begin{proof}[Proof of Theorem~\ref{thm:main3}]Let $\Delta\ge 2$ and let $C$ be sufficiently large compared to $\Delta$. We introduce auxiliary constants $\beta,\mu>0$ so that $1/n\ll\mu\ll \beta\ll 1/\Delta$, and let $G$ be a blue/red coloured graph with $|G|=2n-2$ and $\delta(G)\ge \lfloor3|G|/4\rfloor+C$. Let $T$ be an $n$-vertex tree with $\Delta(T)\le \Delta$ and suppose $G$ contains no monochromatic copy of $T$. Similarly as for the proof of Theorem~\ref{thm:main}, we may assume that there is a partition $V(G)=V_0\cup V_1\cup V_2\cup V_3\cup V_4$ such that 
\begin{enumerate}[label=(\roman{enumi})]
    \item $|V_0|\le\mu n$ and $|V_i|=(1\pm\mu)n/2$ for $i\in [4]$.\label{it:V_i_size2}
    \item For every $x\in V_{1}\cup V_{2}$ (resp. $x\in V_3\cup V_4$),  $\blue{d}(x,V_3\cup V_4)\le \mu n$ (resp. $\blue{d}(x,V_1\cup V_2)\le \mu n$).\label{it:low_blue_deg2}
    \item For every $x\in V_{1}\cup V_{3}$ (resp. $x\in V_2\cup V_4$),  $\red{d}(x,V_2\cup V_4)\le \mu n$ (resp. $\red{d}(x,V_1\cup V_3)\le \mu n$).\label{it:low_red_deg2} 
\end{enumerate}
Again, for each $x\in V_i$, $i\in [4]$, we have
\begin{equation*}
    d(x,V_i)\ge \left\lfloor\frac{3|G|}{4}\right\rfloor+C-(1+\mu)n-2\mu n-|V_0|\ge (1-10\mu)\frac{n}{2}.
\end{equation*}
In particular, we have $e(V_1)\ge (1-100\mu)n^2/4$ and thus at $G[V_1]$ has least $(1-100\mu)n^2/8$ edges of the same colour, say red. Similar to the proof of Theorem~\ref{thm:main}, we have two cases: either $\blue{e}(V_i)\ge 100\beta n^2 $ for some $i\in [4]$, or $\blue{e}(V_i)\le 100\beta n^2$ for all $i\in [4]$. The former case follows by the same reasoning as in the proof of Theorem~\ref{thm:main}, so let us assume that the second case holds, that is, we have $\blue{e}(V_i)\le 100\beta n^2$ for all $i\in [4]$. 

We note that if some $v \in V_0$ has at least 1 red edge to $V_1 \cup V_3$, then we may assume that it has at most $100 \Delta$ red edges to $V_2 \cup V_4$, otherwise we may conclude by Proposition~\ref{lem:emb:extremal:1}. Thus we partition $V_0 = V_{1, 3} \cup V_{2, 4}$ so that each vertex of $V_{1, 3}$ (resp $V_{2, 4}$) satisfies $\red{d}(x, V_2 \cup V_4) \le 100 \Delta$ (resp $\red{d}(x, V_1 \cup V_3) \le 100 \Delta$). 

Now observe that if some vertex $v \in V_0$ sends at least $100 \Delta$ blue edges to three of $V_1, V_2, V_3, V_4$, we may conclude by Proposition~\ref{lem:emb:extremal:2}. Hence each $x \in V_{1, 3}$ satisfies
\[
\red{d}(x, V_1 \cup V_3)  
\ge 
\left\lfloor\frac{3(2n-2)}{4}\right\rfloor - (1 + 3\mu)n 
\ge 
(1 - 7 \mu )\frac{n}{2}. 
\]
Thus if $e(V_{1,3}, V_2 \cup V_4) \ge 1$, we can conclude by Proposition~\ref{lem:emb:extremal:1}. A symmetric argument also implies $e(V_{2,4}, V_1 \cup V_3) = 0$. Next, note each vertex $x \in V_{1,3}$ has $\blue{d}(x, V_{j}) \ge (1 - 5\mu)n/4$ for some $j \in \{2, 4\}$, as otherwise we have
\[d(x) \le |V_1 \cup V_3|  + |V_{0}|  + \blue{d}(x, V_2) + \blue{d}(x, V_4) \le (1 + 2\mu)n  + (1 - 5\mu)n/2< \delta(G), \]
a contradiction. A symmetric argument holds for $V_{2,4}$ We now partition $V_{1,3} = V_1' \cup V_3'$ and $V_{2, 4} = V_2' \cup V_4'$ by assigning each $x \in V_{1, 3}$ to $V_i'$, where $\blue{d}(x, V_{i + 1})$ is maximal over $ i \in [2]$, and each $x \in V_{2,4}$ to $V_i'$, where $\blue{d}(x, V_{i -1})$ is maximal over $ i \in [2]$. 

We let $W_i = V_i \cup V_i'$ for all $i \in [4]$ and note that $\red{e}(W_1 \cup W_3, W_2 \cup W_4) = 0$, otherwise we may conclude by Proposition~\ref{lem:emb:extremal:1}. If $|W_i \cup W_j| \ge n$ for some $(i, j) \in \{(1, 3), (2, 4)\}$ then we conclude Lemma~\ref{lem:exact:1}. Otherwise,  we must have that $|W_1\cup W_3|=n-1$ and $|W_2\cup W_4|=n-1$. Note that for $x\in W_1$, we have 
\[
\blue{d}(x,W_4)
\ge
\left\lfloor\frac{3(2n-2)}{4}\right\rfloor+C-(|W_1\cup W_3|-1)-|W_2|=
\left\lfloor\frac{n+1}{2}\right\rfloor-|W_2|+C.
\]
Similarly, for $x\in W_3$ we have that $d(x,W_2)\ge \lfloor\frac{n+1}{2}\rfloor-|W_4|+C$. As $|W_2\cup W_4|=n-1$, then either $|W_2|\le \lfloor\frac{n+1}{2}\rfloor$ or $|W_4|\le \lfloor\frac{n+1}{2}\rfloor$. Suppose without loss of generality that $|W_2|\le \lfloor\frac{n+1}{2}\rfloor$. In this case, we have 
\[\blue{e}(W_1,W_4)\ge C|W_1|\ge \frac{|C|}{4}|W_1\cup W_4|,\]
in which case we can conclude by Lemma~\ref{lem:emb:extremal:3}.

% If $|V_i \cup V_j| \ge n$ for some $(i, j) \in \{(1, 3), (2, 4)\}$ then we conclude Lemma~\ref{lem:exact:1}. Otherwise,  we must have that $|V_1\cup V_3|=n-1$ and $|V_2\cup V_4|=n-1$. Note that for $x\in V_1$, we have 
% \[\blue{d}(x,V_4)\ge \lfloor\frac{3(2n-2)}{4}\rfloor+C-(|V_1\cup V_3|-1)-|V_3|=\lfloor\frac{n+1}{2}\rfloor-|V_3|+C.\]
% Similarly, for $x\in V_3$ we have that $d(x,V_2)\ge \lfloor\frac{n+1}{2}\rfloor-|V_1|+C$. As $|V_1\cup V_3|=n-1$, then either $|V_1|\le \lfloor\frac{n+1}{2}\rfloor$ or $|V_3|\le \lfloor\frac{n+1}{2}\rfloor$. Suppose without loss of generality that $|V_3|\le \lfloor\frac{n+1}{2}\rfloor$. In this case, we have 
% \[\blue{e}(V_1,V_4)\ge C|V_1|\ge \frac{|C|}{4}|V_1\cup V_2|,\]
% in which case we can conclude by Lemma~\ref{lem:emb:extremal:3}.

%\[\delta(\blue{G}[V_1, V_4]) \ge \delta(G) - 3(n-1)/2 - 400 \Delta \ge C - 400 \Delta.\]
%Likewise, $\delta(\blue{G}[V_2, V_3]) \ge C-400 \Delta$ and we conclude by Proposition~\ref{lem:emb:extremal:3}.

\end{proof}

\begin{proof}[Proof of Theorem~\ref{thm:main2}]
    We introduce an auxiliary constant $\mu$ that satisfies $0<\alpha \ll\mu\ll\delta$. Let $G$ be a blue/red coloured graph with $2n-2$ vertices and minimum degree $\delta(G)\ge (3/4+\delta)2n$, and let $T$ be an $n$-vertex tree with $\Delta(T)\le n^{\alpha}$. If $G$ contains no monochromatic copy of $T$, then, by Theorem~\ref{thm:stability:unbounded}, we may assume that there is a partition $V(G)=U_0\cup U_1\cup U_2$ such that 
\begin{itemize}
    \item $|U_i|\ge (1-\mu)n$ for $i\in [2]$,
    \item $\delta(\red{G}[U_i])\ge n/2+\delta n/100$ for $i\in [2]$, and 
    \item $\red{e}(U_1,U_2)\le \mu n^2$.
\end{itemize}
\begin{claim}\label{thm2:claim1}If there is a vertex $x_0$ such that $\red{d}(x_0, U_i)\ge 100\mu n$ for $i\in [2]$, then $T\subset \red{G}$.
\end{claim}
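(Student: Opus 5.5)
The plan is to split $T$ at a single vertex into two pieces, each of which fits comfortably inside one of the red graphs $\red{G}[U_1]$ and $\red{G}[U_2]$, and to glue the pieces at $x_0$. Since $\Delta(T)\le n^{\alpha}$, we cannot cut $T$ into two halves of prescribed sizes using a single edge. Instead we cut at a vertex and distribute the branches.

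\textbf{Step 1 (splitting $T$).} Apply Lemma~\ref{lem:cutVertex} to obtain a vertex $z$ such that every component of $T-z$ has at most $\lceil (n-1)/2\rceil$ vertices. The components number at most $n^{\alpha}$. Group them greedily, as in Lemma~\ref{lem:numberPartition}, into two forests $F_1,F_2$ with $F_1\cup F_2=T-z$ and $|F_1|,|F_2|\le \lceil n/2\rceil+\ldots$. In fact we only need $|F_i|\le (1-\delta/200)\,n/2$ relative room, and the bound $|F_i|\le \lceil n/2\rceil$ suffices because we embed into sets of size about $n$ with minimum degree above $n/2$.

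Set $T^{(i)}$ to be the tree induced by $z$ together with $F_i$. Then $|T^{(i)}|\le n/2+2$, and $z$ has at most $n^{\alpha}$ neighbours in $T^{(i)}$.

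\textbf{Step 2 (embedding into each side).} Map $z$ to $x_0$. For each $i\in[2]$, we embed $T^{(i)}$ into $\red{G}[U_i\cup\{x_0\}]$ with $z\mapsto x_0$, doing so greedily.

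First, map the children of $z$ in $F_i$ to distinct red neighbours of $x_0$ in $U_i$. This is possible since $\red{d}(x_0,U_i)\ge 100\mu n\gg n^{\alpha}$.

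Then embed the rest of $T^{(i)}$ vertex by vertex along a BFS order. Each already embedded vertex $u\in U_i$ has at least $n/2+\delta n/100$ red neighbours in $U_i$. At most $|T^{(i)}|\le n/2+2$ of these are occupied, so at least $\delta n/100-2>0$ remain free at every step.

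The two embeddings use disjoint vertex sets, because $U_1\cap U_2=\emptyset$, and they share only $x_0$. Their union is therefore a red copy of $T$. If $x_0\in U_1\cup U_2$, say $x_0\in U_1$, nothing changes: we simply exclude $x_0$ from the free vertices on that side.

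\textbf{Main obstacle.} Honestly, I expect no real obstacle here; the greedy argument has room to spare. The only point to check carefully is the sizes in Step 1. The partition must guarantee $|F_i|\le n/2+O(1)$ for both $i$, which is exactly what the number-partition lemma provides once the largest component is bounded by $\lceil (n-1)/2\rceil$. If Lemma~\ref{lem:numberPartition} leaves a third part $I_3$, we merge it into the smaller of the other two parts. The resulting size stays below $\lceil n/2\rceil$ plus at most one component, which is still well within the slack $\delta n/100$, provided we first balance the parts so that both are at most $n/2+O(1)$; with parts summing to $n-1$ and each piece at most $n/2$, such a balancing always exists.
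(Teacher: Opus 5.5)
\textbf{There is a genuine gap, in Step~1.} Your greedy embedding in Step~2 is fine. The problem is Step~1: it is not always possible to cut $T$ at a single vertex $z$ so that both forests have at most $n/2+O(1)$ vertices, or even $n/2+\delta n/200$.

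\emph{Counterexample.} Let $T$ have a vertex $c$ of degree $3$ whose removal leaves three subtrees of order $(n-1)/3$ each, for instance a spider with three equal legs. Its maximum degree is at most $3\le n^{\alpha}$.
\begin{itemize}
\item If $z=c$, any grouping of the three components into two forests puts two of them together, giving $2(n-1)/3$ vertices on one side.
\item If $z\neq c$, the component of $T-z$ containing $c$ already has more than $2(n-1)/3$ vertices.
\end{itemize}

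\emph{Where your argument breaks.} Your merging remark is exactly the failing point. The leftover part $I_3$ in Lemma~\ref{lem:numberPartition} is a whole component, which can have linear size (here $(n-1)/3$), not $O(1)$ and not within $\delta n/100$. The assertion that ``with parts summing to $n-1$ and each piece at most $n/2$, such a balancing always exists'' is false.

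\emph{Why the failure is forced.} The only red connection between $U_1$ and $U_2$ you can rely on is $x_0$. So each component of $T-z$ must lie entirely inside one $U_i$. Hence some $\red{G}[U_i]$ must host a forest on about $2n/3$ vertices, while its minimum degree is only guaranteed to be $n/2+\delta n/100$. Greedy embedding gives nothing here.

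\textbf{The missing idea.} You need a non-greedy embedding result for the large piece. The paper does the opposite of balancing:
\begin{itemize}
\item It uses Lemma~\ref{lem:cutvertex:2} to split off a \emph{small} subtree $T_1$ with $\gamma n\le |T_1|\le 2\gamma n$, where $\mu\ll\gamma\ll\delta$.
\item It embeds the large piece $T_2$, which has order at most $(1-\gamma/2)n\le |U_1|$, into $\red{G}[U_1]$ with $z\mapsto x_0$, via the Koml\'os--S\'ark\"ozy--Szemer\'edi theorem (Theorem~\ref{thm:KSS}). This applies because $\delta(\red{G}[U_1])\ge (1+\delta/100)|U_1|/2$ and $\Delta(T)\le n^{\alpha}$.
\item Only then does it embed $T_1$ greedily into $\red{G}[U_2]$ starting from $x_0$. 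This works because $|T_1|\le 2\gamma n$ is far below the minimum degree there, and $x_0$ has $100\mu n\ge n^{\alpha}$ red neighbours in $U_2$.
\end{itemize}

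Your greedy argument is essentially this second step. What it lacks is a KSS-type (or regularity-based) embedding of an almost-spanning tree, which is needed for the first step.
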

\begin{proof}[Proof of claim]Let $\gamma>0$ be an auxiliary constant such that $\mu\ll\gamma\ll \delta$. Using Lemma~\ref{lem:cutvertex:2}, we find a vertex $z\in V(T)$ and subtrees $T_1,T_2\subset T$ such that $\gamma n\le |T_1|\le 2\gamma n$, $V(T_1)\cap V(T_2)=\{z\}$ and $T=T_1\cup T_2$. Noting that $|T_2|\le n-\gamma n+1\le (1-\gamma/2)n\le |U_1|$, as $\mu\ll \gamma$, and that $\delta(\red{G}[U_1])\ge (1+\delta/100)|U_1|/2$, we can use Theorem~\ref{thm:KSS} to find a copy of $T_2$ in $\red{G}[U_1]$ with $z$ copied to $x_0$. Then, complete a copy of $T$ by greedily mapping $T_1$ to $\red{G}[U_2]$, starting at $x_0$, using that $\delta(\red{G}[U_2])\ge n/2\ge |T_1|$ and $\Delta(T)\le n^{\alpha}\le 100\mu n$. 
\end{proof}
%\begin{claim}\label{thm2:claim2}If there is a vertex $x_0\in U_0$ such that $\blue{d}(x_0, U_i)\ge 100\mu n$ for $i\in [2]$, then $T\subset \blue{G}$.    
%\end{claim}
%\begin{proof}[Proof of claim]
    %\end{proof}
Note that Claim~\ref{thm2:claim1} implies each vertex $u\in U_0$ sends at most $100\mu n$ red edges to either $U_1$ or $U_2$. Therefore, we may partition $U_0=U_{0,1}\cup U_{1,1}$ so that, for $i\in [2]$, 
\begin{itemize}
    \item each vertex $u\in U_{0,i}$ satisfies $\red{d}(u,U_{2-i})< 100\mu n$.
\end{itemize}
Defining $U_1^+=U_1\cup U_{0,1}$ and $U_2^+=U_2\cup U_{0,2}$, we have that each vertex $x\in U^+_{1}$ satisfies 
\begin{eqnarray*}
    \blue{d}(x,U^+_2)&\ge& (3/4+\delta)2n-|U_1|-|U_0|-100\mu n\\&\ge& (3/4+\delta)2n-(2n-1-|U_2|)-100\mu n \\
    &\ge& (1+\delta/2)|U_2^+|/2,
\end{eqnarray*}
and similarly for a vertex in $U_2^+$. Therefore, as we may assume $|U_1^+|\ge |U_2^+|$, we have $|U_1^+|\ge n-1$ and $|U_2^+|\ge (1-\mu)n$ and thus we can use Lemma~\ref{thm:KSS:bipartite} to find a copy of $T$ in $\blue{G}[U_1^+,U_2^+]$.

\end{proof}

\section{Concluding remarks}\label{sec:remarks}
While Theorem \ref{thm:main} resolves the conjecture of Schelp~\cite{schelp2012}, one could ask for additional improvements.

\paragraph*{Improving the minimum degree in Theorem~\ref{thm:main3}}
From the proof of Theorem~\ref{thm:main3}, we see that we can take the constant $C$ of the form $C=\text{poly}(\Delta)$. Ideally, one would show that $C=0$ suffices, though even proving $C=\Delta^{1+o(1)}$ would be interesting.

\paragraph*{Larger maximum degree}
As mentioned in the introduction, we cannot expect to improve the maximum degree condition in Theorems~\ref{thm:main} and~\ref{thm:main2} beyond $o(n/\log n)$. In this direction, we believe that Theorem~\ref{thm:main2} should hold for trees of higher maximum degree.
\begin{question}Prove that Theorem~\ref{thm:main2} holds for trees with maximum degree $cn/\log n$, provided $c$ is sufficiently small. \end{question}
Although our approach does not work for this question, because we rely heavily on controlling the maximum degree to prove Theorems~\ref{thm:stability} and~\ref{thm:stability:unbounded}, we believe that the techniques we used for the stability analysis might still be useful. 

\paragraph*{Smaller host graph}
 Burr~\cite{burr74} noted that the Ramsey number of a tree $T$ seems to depend not only on the number of vertices but rather on the sizes of the bipartition classes of $T$, and conjectured that if $T$ has bipartition classes of sizes $t_1,t_2$ with $t_1\ge t_2\ge 2$, then $R(T)=\max\{t_1+2t_2,2t_1\}-1$. Though Burr's conjecture was known to be false for certain double stars (see e.g.~\cite{dubo2025ramsey,norinsunzhao}), Montgomery, Pavez-Sign\'e and Yan~\cite{montgomery2025ramsey} recently showed that Burr's conjecture is indeed true for $n$-vertex trees with maximum degree at most $cn$, where $c\in (0,1)$ is some small absolute constant. 
As our extremal example has size $2n-2$ and $2n-2\ge \max\{t_1 + 2t_2, 2t_1\}-1 $ holds for all $t_1\ge t_2\ge 2$ with $t_1+t_2=n$, it might be possible to have a version of Theorem~\ref{thm:main} for smaller host graphs.
\begin{question}Let $\Delta \in \NN$ and let $T$ be a tree with $\Delta(T)\le \Delta$ and parts of sizes $t_1,t_2$ such that $t_1\ge t_2\ge 2$. Is it true that if  $G$ is a graph with $|G|=\max\{t_1 + 2t_2, 2t_1\}-1$ and minimum degree $\delta(G)\ge\lfloor3|G|/4\rfloor+C_\Delta$, then every blue/red colouring of $G$ contains a monochromatic copy of $T$?
\end{question}

\medskip

\bibliographystyle{habbrv}
\bibliography{Citations}

@article{campos2023exponential,
  title={An exponential improvement for diagonal {R}amsey},
  author={Campos, M. and Griffiths, S. and Morris, R. and Sahasrabudhe, J.},
   JOURNAL = {Ann. of Math. (2)},
  FJOURNAL = {Annals of Mathematics. Second Series},
  year={2025},
doi={10.48550/arXiv.2303.09521},
note={to appear}
}

@article{BONDY197346,
author = {J.A. Bondy and P. Erd\H{o}s},
     TITLE = {Ramsey numbers for cycles in graphs},
   JOURNAL = {J. Combin. Theory Ser. B},
  FJOURNAL = {Journal of Combinatorial Theory. Series B},
    VOLUME = {14},
      YEAR = {1973},
     PAGES = {46--54},
       DOI = {10.1016/s0095-8956(73)80005-x},
}

@article{faudree1974all,
  title={All {R}amsey numbers for cycles in graphs},
  author={Faudree, R. J. and Schelp, R. H.},
   JOURNAL = {Discrete Math.},
  FJOURNAL = {Discrete Mathematics},
  volume={8},
  number={4},
  pages={313--329},
  year={1974},
       DOI = {10.1016/0012-365X(74)90151-4},
}

@article{gerencser1967ramsey,
    AUTHOR = {Gerencs\'er, L. and Gy\'arf\'as, A.},
     TITLE = {On {R}amsey-type problems},
   JOURNAL = {Ann. Univ. Sci. Budapest. E\"otv\"os Sect. Math.},
  FJOURNAL = {Annales Universitatis Scientiarum Budapestinensis de Rolando
              E\"otv\"os Nominatae. Sectio Mathematica},
    VOLUME = {10},
      YEAR = {1967},
     PAGES = {167--170},
      ISSN = {0524-9007},
}

@incollection{McDiarmid_1989, 
    author={McDiarmid, C.},
     TITLE = {On the method of bounded differences},
 BOOKTITLE = {Surveys in combinatorics, 1989 ({N}orwich, 1989)},
    SERIES = {London Math. Soc. Lecture Note Ser.},
    VOLUME = {141},
     PAGES = {148--188},
 PUBLISHER = {Cambridge Univ. Press, Cambridge},
      YEAR = {1989},
}

@article{NIKIFOROV200869,
title = {Cycles and stability},
author = {V. Nikiforov and R.H. Schelp},
   JOURNAL = {J. Combin. Theory Ser. B},
  FJOURNAL = {Journal of Combinatorial Theory. Series B},
volume = {98},
number = {1},
pages = {69-84},
year = {2008},
doi = {10.1016/j.jctb.2007.05.001},
}

@article{balogh2025ramsey,
  title={Ramsey-type problems for tilings in dense graphs},
  author={Balogh, J. and Freschi, A. and Treglown, A.},
   JOURNAL = {European J. Combin.},
  FJOURNAL = {European Journal of Combinatorics},
volume = {131},
pages = {Paper No. 104228},
year = {2026},
doi = {10.1016/j.ejc.2025.104228},
}

@article{aragao2025degree,
  author={Arag{\~a}o, L. and Marciano, J. P. and Mendon{\c{c}}a, W.},
    TITLE = {Degree conditions for {R}amsey goodness of paths},
   JOURNAL = {European J. Combin.},
  FJOURNAL = {European Journal of Combinatorics},
    VOLUME = {124},
      YEAR = {2025},
     PAGES = {Paper No. 104082},
       DOI = {10.1016/j.ejc.2024.104082},
}

@article{balogh2022monochromatic,
  title={Monochromatic paths and cycles in 2-edge-coloured graphs with large minimum degree},
  author={Balogh, J. and Kostochka, A. and Lavrov, M. and Liu, X.},
 JOURNAL = {Combin. Probab. Comput.},
  FJOURNAL = {Combinatorics, Probability and Computing},
  volume={31},
  number={1},
  pages={109--122},
  year={2022},
       DOI = {10.1017/s0963548321000201},
}

@Incollection{Harary1972,
author="Harary, F.",
     TITLE = {Recent results on generalized {R}amsey theory for graphs},
 BOOKTITLE = {Graph theory and applications ({P}roc. {C}onf., {W}estern
              {M}ichigan {U}niv., {K}alamazoo, {M}ich., 1972; dedicated to
              the memory of {J}. {W}. {T}. {Y}oungs)},
    SERIES = {Lecture Notes in Math.},
    VOLUME = {Vol. 303},
     PAGES = {125--138},
 PUBLISHER = {Springer, Berlin-New York},
      YEAR = {1972},
}

@book{JLR2000,
	    AUTHOR = {Janson, S. and {\L}uczak, T. and Ruci{\'n}ski, A.},
	TITLE = {Random graphs},
    SERIES = {Wiley-Interscience Series in Discrete Mathematics and
              Optimization},
 PUBLISHER = {Wiley-Interscience, New York},
	YEAR = {2000},
	PAGES = {xii+333},
	ISBN = {0-471-17541-2},
	MRCLASS = {05C80 (60C05 82B41)},
	MRNUMBER = {1782847},
	MRREVIEWER = {Mark R. Jerrum},
	DOI = {10.1002/9781118032718},
}

@article{rosta1973ramsey,
     TITLE = {On a {R}amsey-type problem of {J}. {A}. {B}ondy and {P}.
              {E}rd{\H{o}}s. {I}, {II}},
  author={Rosta, Vera},
   JOURNAL = {J. Combinatorial Theory Ser. B},
  FJOURNAL = {Journal of Combinatorial Theory. Series B},
  volume={15},
  number={1},
     PAGES = {94--104; ibid. 15 (1973), 105--120},
  year={1973},
       DOI = {10.1016/0095-8956(73)90035-x},
}

@article{bucic2023tight,
  author={Buci{\'{c}}, M. and Sudakov, B.},
     TITLE = {Tight {R}amsey bounds for multiple copies of a graph},
   JOURNAL = {Adv. Comb.},
  FJOURNAL = {Advances in Combinatorics},
      YEAR = {2023},
month = {3 Feb},
volume={1},
     PAGES = { 22pp.},
	doi = {10.19086/aic.2023.1},
}

@article{Besomi2019,
    AUTHOR = {Besomi, G. and Pavez-Sign\'e, M. and Stein, M.},
     TITLE = {Degree conditions for embedding trees},
   JOURNAL = {SIAM J.~Discrete Math.},
  FJOURNAL = {SIAM Journal on Discrete Mathematics},
    VOLUME = {33},
      YEAR = {2019},
    NUMBER = {3},
     PAGES = {1521--1555},
      ISSN = {0895-4801,1095-7146},
   MRCLASS = {05C35},
  MRNUMBER = {3997713},
       DOI = {10.1137/18M1210861},
}

@article{Bondy1971,
    AUTHOR = {Bondy, J. A.},
     TITLE = {Pancyclic graphs. {I}},
   JOURNAL = {J. Combin. Theory Ser. B},
  FJOURNAL = {Journal of Combinatorial Theory. Series B},
    VOLUME = {11},
      YEAR = {1971},
     PAGES = {80--84},
      ISSN = {0095-8956},
       DOI = {10.1016/0095-8956(71)90016-5},
}

@article{Dirac52,
   AUTHOR = {Dirac, G. A.},
     TITLE = {Some theorems on abstract graphs},
   JOURNAL = {Proc. London Math. Soc. (3)},
  FJOURNAL = {Proceedings of the London Mathematical Society. Third Series},
    VOLUME = {2},
      YEAR = {1952},
     PAGES = {69--81},
      ISSN = {0024-6115,1460-244X},
       DOI = {10.1112/plms/s3-2.1.69},
}

@article{zhao2011,
    AUTHOR = {Zhao, Y.},
     TITLE = {Proof of the {$(n/2-n/2-n/2)$} conjecture for large {$n$}},
   JOURNAL = {Electron. J. Combin.},
  FJOURNAL = {Electronic Journal of Combinatorics},
    VOLUME = {18},
      YEAR = {2011},
    NUMBER = {1},
     PAGES = {Paper 27, 61pp.},
       DOI = {10.37236/514},
}

@article{schelp2012,
    AUTHOR = {Schelp, R. H.},
     TITLE = {Some {R}amsey-{T}ur\'an type problems and related questions},
   JOURNAL = {Discrete Math.},
  FJOURNAL = {Discrete Mathematics},
    VOLUME = {312},
      YEAR = {2012},
    NUMBER = {14},
     PAGES = {2158--2161},
       DOI = {10.1016/j.disc.2011.09.015},
}

@article{li2010new,
    AUTHOR = {Li, H. and Nikiforov, V. and Schelp, R. H.},
     TITLE = {A new class of {R}amsey-{T}ur\'an problems},
   JOURNAL = {Discrete Math.},
  FJOURNAL = {Discrete Mathematics},
    VOLUME = {310},
      YEAR = {2010},
    NUMBER = {24},
     PAGES = {3579--3583},
       DOI = {10.1016/j.disc.2010.09.009},
}

@article{cycles_2012, 
    AUTHOR = {Benevides, F. S. and {\L}uczak, T. and Scott, A. and Skokan, J.
              and White, M.},
     TITLE = {Monochromatic cycles in 2-coloured graphs},
   JOURNAL = {Combin. Probab. Comput.},
  FJOURNAL = {Combinatorics, Probability and Computing},
    VOLUME = {21},
      YEAR = {2012},
    NUMBER = {1-2},
     PAGES = {57--87},
    DOI={10.1017/S0963548312000090}, 
}

@incollection{wormald1999differential,
  title={The differential equation method for random graph processes and greedy algorithms},
  author={Wormald, Nicholas C},
editor = {M. Karoński and H. J. Prömel},
  booktitle={Lectures on approximation and randomized algorithms},
publisher = "Wydawnictwo Naukowe Pwn",
  pages={73--155},
  year={1999}
}

@article{luczak1999r,
  title={${R}({C}_n, {C}_n, {C}_n)\le(4+ o (1)) n$},
  author={{\L}uczak, Tomasz},
   JOURNAL = {J. Combin. Theory Ser. B},
  FJOURNAL = {Journal of Combinatorial Theory. Series B},
  volume={75},
  number={2},
  pages={174--187},
  year={1999},
       DOI = {10.1006/jctb.1998.1874},
}

@article{kathapurkar2022spanning,
  title={Spanning trees in dense directed graphs},
  author={Kathapurkar, Amarja and Montgomery, Richard},
   JOURNAL = {J. Combin. Theory Ser.~B},
  FJOURNAL = {Journal of Combinatorial Theory. Series B},
  volume={156},  
  pages={223--249},
  year={2022},
       DOI = {10.1016/j.jctb.2022.04.007},
}

@article{bottcher2020embedding,
  title={Embedding spanning bounded degree graphs in randomly perturbed graphs},
  author={B{\"o}ttcher, Julia and Montgomery, Richard and Parczyk, Olaf and Person, Yury},
  journal={Mathematika},
  volume={66},
  number={2},
  pages={422--447},
  year={2020},
       DOI = {10.1112/mtk.12005},
}

@article{bottcher2012tripartite,
  title={The tripartite {R}amsey number for trees},
  author={B{\"o}ttcher, Julia and Hladk{\`y}, Jan and Piguet, Diana},
   JOURNAL = {J. Graph Theory},
  FJOURNAL = {Journal of Graph Theory},
  volume={69},
  number={3},
  pages={264--300},
  year={2012},
       DOI = {10.1002/jgt.20582},
}

@article{dubo2025ramsey,
  title={On the {R}amsey number of the double star},
  author={Dub{\'o}, Freddy Flores and Stein, Maya},
   JOURNAL = {Discrete Math.},
  FJOURNAL = {Discrete Mathematics},
    VOLUME = {348},
      YEAR = {2025},
    NUMBER = {1},
     PAGES = {Paper No.~114227, 4},
       DOI = {10.1016/j.disc.2024.114227},
}

@article{paths_schelp_conj,
    AUTHOR = {Gy\'arf\'as, A. and S\'ark\"ozy, G. N.},
     TITLE = {Star versus two stripes {R}amsey numbers and a conjecture of
              {S}chelp},
   JOURNAL = {Combin. Probab. Comput.},
  FJOURNAL = {Combinatorics, Probability and Computing},
    VOLUME = {21},
      YEAR = {2012},
    NUMBER = {1-2},
     PAGES = {179--186},
       DOI = {10.1017/S0963548311000599},
       URL = {https://doi.org/10.1017/S0963548311000599},
}

@article{KSS, 
     AUTHOR = {Koml\'os, J. and S\'ark\"ozy, G. N. and Szemer\'edi, E.},
     TITLE = {Spanning trees in dense graphs},
   JOURNAL = {Combin. Probab. Comput.},
  FJOURNAL = {Combinatorics, Probability and Computing},
    VOLUME = {10},
      YEAR = {2001},
    NUMBER = {5},
     PAGES = {397--416},
       DOI = {10.1017/S0963548301004849},
       URL = {https://doi.org/10.1017/S0963548301004849}
}

@inproceedings{burr74,
    AUTHOR = {Burr, S. A.},
     TITLE = {Generalized {R}amsey theory for graphs---a survey},
 BOOKTITLE = {Graphs and combinatorics ({P}roc. {C}apital {C}onf., {G}eorge
              {W}ashington {U}niv., {W}ashington, {D}.{C}., 1973)},
    SERIES = {Lecture Notes in Math.},
    VOLUME = {Vol. 406},
     PAGES = {52--75},
 PUBLISHER = {Springer, Berlin-New York},
      YEAR = {1974},
}

@article{BurrErdosRamsey,
    AUTHOR = {Burr, S. A. and Erd\H{o}s, P.},
     TITLE = {Extremal {R}amsey theory for graphs},
   JOURNAL = {Utilitas Math.},
  FJOURNAL = {Utilitas Mathematica. An International Journal of Discrete and
              Combinatorial Mathematics, and Statistical Design},
    VOLUME = {9},
      YEAR = {1976},
     PAGES = {247--258},
}

@article {ramsey30,
    AUTHOR = {Ramsey, F. P.},
     TITLE = {On a {P}roblem of {F}ormal {L}ogic},
   JOURNAL = {Proc. London Math. Soc. (2)},
  FJOURNAL = {Proceedings of the London Mathematical Society. Second Series},
    VOLUME = {30},
      YEAR = {1929},
    NUMBER = {4},
     PAGES = {264--286},
      ISSN = {0024-6115},
   MRCLASS = {99-04},
  MRNUMBER = {1576401},
       DOI = {10.1112/plms/s2-30.1.264},
       URL = {https://doi.org/10.1112/plms/s2-30.1.264},
}

@misc{pokrovskiy2025embedding,
  title={Embedding trees using minimum and maximum degree conditions},
  author={Pokrovskiy, Alexey and Versteegen, Leo and Williams, Ella},
  year={2025},
      eprint={arXiv:2512.16799},
      archivePrefix={arXiv},
      primaryClass={math.CO},
   doi={10.48550/arXiv.2512.16799}, 
}

@misc{chen2025embedding,
  title={Embedding loose trees in {$k$}-uniform hypergraphs},
  author={Chen, Y. and Lo, A.},
  year={2025},
      eprint={arXiv:2502.04783},
      archivePrefix={arXiv},
      primaryClass={math.CO},
   doi={10.48550/arXiv.2502.04783}, 
}

@misc{norinsunzhao,
      title={Asymptotics of {R}amsey numbers of double stars}, 
      author={S. Norin and Y. R. Sun and Y. Zhao},
      year={2016},
      eprint={arxiv:1605.03612},
      archivePrefix={arXiv},
      primaryClass={math.CO},
   doi={10.48550/arXiv.1605.03612}, 
}

@misc{balister2024upper,
  title={Upper bounds for multicolour {R}amsey numbers},
  author={Balister, P. and Bollob{\'a}s, B. and Campos, M. and Griffiths, S. and Hurley, E. and Morris, R. and Sahasrabudhe, J. and Tiba, M.},
  year={2024},
      eprint={arxiv:2410.17197},
      archivePrefix={arXiv},
      primaryClass={math.CO},
doi={10.48550/arXiv.2410.17197},
}

@misc{montgomery2025ramsey,
  title={Ramsey numbers of trees},
  author={Montgomery, R. and Pavez-Sign{\'e}, M. and Yan, J.},
  year={2025},
      eprint={arxiv:2509.07934},
      archivePrefix={arXiv},
      primaryClass={math.CO},
doi={10.48550/arXiv.2509.07934},
}

@misc{gupta2024optimizing,
  title={Optimizing the {CGMS} upper bound on {R}amsey numbers},
  author={Gupta, P. and Ndiaye, N. and Norin, S. and Wei, L.},
  year={2024},
      eprint={arxiv:2407.19026},
      archivePrefix={arXiv},
      primaryClass={math.CO},
doi={10.48550/arXiv.2407.19026},
}

\end{document}